 \newtheorem{theorem}{Theorem}[section]
 \newtheorem{lemma}[theorem]{Lemma}
 \newtheorem{corollary}[theorem]{Corollary}
 \newtheorem{proposition}[theorem]{Proposition}
 \newtheorem{conjecture}[theorem]{Conjecture}
 \newtheorem{remark}[theorem]{Remark}
 \theoremstyle{definition}
 \newtheorem{definition}[theorem]{Definition}
 \newtheorem{question}[theorem]{Question}
\numberwithin{equation}{section}
\newcommand{\p}{\partial}
\newcommand{\Ca}{\mathcal{C}}
\newcommand{\C}{\mathbb{C}}
\newcommand{\cO}{\mathcal{O}}
\newcommand{\cS}{\mathcal{S}}
\newcommand{\R}{\mathbb{R}}
\newcommand{\dS}{\mathbb{S}}
\DeclareMathOperator{\Hilb}{\textbf{Hilb}}
\DeclareMathOperator{\Aut}{Aut}
\DeclareMathOperator{\diam}{diam}
\DeclareMathOperator{\Id}{Id}
\DeclareMathOperator{\Vol}{Vol}
\begin{document}
 \title{Bubbling of K\"ahler-Einstein metrics}

\dedicatory{In Memory of Professor Jean-Pierre Demailly}
\date{\today}
\author{Song Sun}
\address{Department of Mathematics, University of California, Berkeley, CA 94720 } 
\email{sosun@berkeley.edu}

 \maketitle
 \begin{abstract}
 	We prove the finite step termination of bubble trees for singularity formation of polarized K\"ahler-Einstein metrics in the non-collapsing situation.  We also raise several questions and conjectures in connection with algebraic geometry and Riemannian geometry.
 \end{abstract}
 \section{introduction}
The study of K\"ahler-Einstein metrics has been a central topic in geometry for the past few decades. The main existence results on compact K\"ahler manifolds are due to Aubin \cite{Aubin}-Yau \cite{Yau} (in the case $c_1<0$), Yau \cite{Yau} (in the case $c_1=0$) and Chen-Donaldson-Sun \cite{CDS} (in the case $c_1>0$, in terms of the K-stability criterion). One interesting  research direction is lead by the interplay between geometric analysis and complex algebraic geometry of singularities. 
 On the one hand, bridges with algebraic geometry allow one to prove deep results concerning the structure of singularities of K\"ahler-Einstein metrics that are  not yet accessible using traditional techniques in geometric analysis. On the other hand, geometric analytic results  motivate the right  questions concerning singularities in algebraic geometry that would not have been envisioned otherwise and  enable the application of sophisticated algebro-geometric techniques to answer these questions and form algebro-geometric generalizations.

   In this paper we will make some further progress along these lines. More precisely, we aim toward understanding the algebro-geometric meaning of  singularity formation process of K\"ahler-Einstein metrics, building upon previous results and discussion in \cite{DS1, DS2, Donaldson14, SS}. To set the stage we consider a sequence of polarized K\"ahler-Einstein metrics $(X_j, L_j, \omega_j, p_j)$ with $\omega_j\in 2\pi c_1(L_j)$ and $Ric(\omega_j)=c_j\omega_j(c_j\in \{1, 0, -1\})$. When $c_j=0$ we also assume $K_{X_j}$ is holomorphically trivial.  We assume that $\diam(X_j,\omega_j)\leq D$ for some $D\geq0$. The latter in particular implies a uniform \emph{volume non-collapsing} condition, i.e., there exists a $\kappa>0$ such that  \begin{equation}\label{e:vnon}\Vol(B(p_j, 1))\geq \kappa
 \end{equation}
 for all $j$. 
   It is proved in  \cite{DS1} that after passing to a subsequence we obtain a pointed Gromov-Hausdorff limit $(X_\infty, d_\infty, p_\infty)$ which has the structure of a normal projective variety with log terminal singularities and $d_\infty$ is induced by a weak K\"ahler-Einstein metric $\omega_\infty$ on $X_\infty$. It is proved in \cite{DS2} that there is a unique metric tangent cone at $p_\infty$, which we denote by $\mathcal C_0(X_\infty)$. The latter is naturally a normal affine variety endowed with a weak Calabi-Yau (i.e., Ricci-flat K\"ahler) cone metric. Furthermore, the underlying algebraic structure on $\mathcal C_0(X_\infty)$ can be obtained in terms of a 2-step degeneration from the local ring $\mathcal O_{X_\infty, p_\infty}$ defined by the metric.   The cone $\mathcal C_0(X_\infty)$ and an intermediate K-semistable Fano cone $W$ are conjectured in \cite{DS2} to be algebro-geometric invariants of the germ of $X_\infty$ at $p_\infty$, thus leading to a local notion of stability. This conjecture was confirmed by Li-Wang-Xu \cite{LWX}. There are also many  follow-up work on related topics in algebraic geometry; see for example \cite{XZ} and the references therein for more information.   

From the viewpoint of geometric analysis, tangent cones can be regarded as only the \emph{first order} approximation of the singular behavior of solutions to an elliptic non-linear PDE.  In our case the tangent cone $\mathcal C_0(X_\infty)$ only depends on the limit space $X_\infty$, as such it does not encode the dynamic information on how the singularity is formed. To investigate the latter we  consider the limits of the rescaled spaces $(X_j, \lambda_j\omega_j, p_j)$ for \emph{all} possible sequences of integers $\lambda_j\geq1$.  Clearly if $\lambda_j$ stays uniformly bounded then the limit is simply a rescaling of $X_\infty$; if $\lambda_j\rightarrow\infty$, then passing to a subsequence by \cite{DS2}
  we obtain a non-compact pointed Gromov-Hausdorff limit $(Z, d_Z,p_Z)$; the latter is naturally an affine variety with a weak Calabi-Yau metric $\omega_Z$ which is asymptotic to a unique metric cone $\mathcal C_\infty(Z)$ at infinity. Moreover, there is also a unique metric tangent cone $\mathcal C_0(Z)$ at $p_Z$.

From now on we fix the given convergent sequence $(X_j, L_j, \omega_j, p_j)$. For our interest in this article we will always assume $p_\infty$ is a singular point.
 \begin{definition}
 	 A \emph{bubble limit} $(Z, \omega_Z, p_Z)$ associated to the above sequence  is a subsequential pointed Gromov-Hausdorff limit of the sequence $(X_j, \lambda_j\omega_j, p_j)$ for some $\lambda_j\geq 1$. \end{definition}
 	 \begin{remark}
 	 For notational simplicity we will sometimes drop $\omega_Z$ and $p_Z$ when discussing bubble limits, but it should be understood that $Z$ always carries the data $(\omega_Z, p_Z)$ implicitly.
 	 \end{remark}

 	 \begin{remark}\label{r:1-2}
	The notion of (pointed) Gromov-Hausdorff convergence we are referring to in this paper is stronger than the standard definition in terms of metric spaces, since we also include the information on the convergence of complex structures. For example, a priori the same metric space $Z$  may be seen as an affine variety in different ways when passing to different further subsequences. See \cite{DS2}, Section 2.2 for more precise definition of the convergence. 
	\end{remark}

   By definition a bubble limit is endowed with both an algebraic structure and a metric structure. We denote by $\mathfrak R$ the class of all bubble limits modulo the equivalence relation given by holomorphic isometries keeping the base points. Obviously $X_\infty$ is the only compact bubble limit. By a diagonal sequence argument we know $\mathcal C_0(X_\infty)\in\mathfrak R$. Also for any  non-compact $Z\in \mathfrak R$, both the tangent cone $\mathcal C_0(Z)$ and the asymptotic cone $\mathcal C_\infty(Z)$  are in $\mathfrak R$.  Notice that $\mathfrak R$ depends not only  on the limit point $p_\infty$, but also on the sequence of points $p_j$ converging to $p_\infty$.
 \begin{definition}
 	For any  $Z\in \mathfrak R$, its volume density at $p_Z$ is defined to be 
 	$$\Vol_0(Z)\equiv \lim_{r\rightarrow 0}\frac{\Vol(B(p_Z, r))}{\Vol(B_0(r))};$$
 	 when $Z$ is non-compact, its volume density at infinity is defined to be 
 	$$\Vol_\infty(Z)\equiv \lim_{r\rightarrow\infty}\frac{\Vol(B(p_Z, r))}{\Vol(B_0(r))}.$$
 	Here $B_0(r)$ denotes the radius $r$ ball in the Euclidean space $\C^n$.
 \end{definition}
 The Bishop-Gromov inequality implies that these are well-defined positive numbers and we have  $\Vol_0(Z)\geq \Vol_\infty(Z)\geq \Vol_0(X_\infty)$. Moreover, the first equality holds if and only if $Z$ is a metric cone with $p_Z$ as a vertex.  If this occurs  we also denote  $$\Vol(Z)\equiv\Vol_\infty(Z).$$ In particular, $\Vol_0(Z)=\Vol(\mathcal C_0(Z))$ and $\Vol_\infty(Z)=\Vol(\mathcal C_\infty(Z))$. 
 
 \begin{definition}
 	A bubble limit $(\mathcal C, O)$ is a \emph{cone limit} if $\mathcal C$ is a metric cone with $O$ as a vertex. 
 \end{definition}
 \begin{remark}
 As above we will often simply denote by $\mathcal C$ a cone limit. 
 \end{remark}

 Our first result is that
 
 \begin{theorem}[Finiteness of volumes]\label{t:volume finite}
There is a constant $\mathfrak V=\mathfrak V(n,D)$ such that the set
 	$$\mathcal V(\mathfrak R)=\{\Vol_0(Z)|Z\in \mathfrak R\}$$ is a finite set with cardinality bounded by $\mathfrak V$.  \end{theorem}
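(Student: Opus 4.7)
The plan is to reduce the theorem to a finiteness statement for the volumes of cone-type bubble limits, and then to apply boundedness/finiteness for K-semistable Fano cones with a uniform positive lower bound on normalized volume.

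First, I would observe that for any $Z\in\mathfrak R$ the metric tangent cone $\mathcal C_0(Z)$ lies again in $\mathfrak R$, by the diagonal sequence argument noted in the excerpt following Remark~\ref{r:1-2}, and by the very definition of the volume density one has $\Vol_0(Z)=\Vol(\mathcal C_0(Z))$. Consequently
$$\mathcal V(\mathfrak R)=\{\Vol(\mathcal C):\mathcal C\in\mathfrak R\text{ is a cone limit}\},$$
and it suffices to bound the cardinality of the set of volumes of cone limits.

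Next I would establish a uniform positive lower bound $v_0=v_0(n,D)>0$ for the volumes of all cone limits. Bishop--Gromov monotonicity on each $(X_j,\omega_j)$ together with the uniform diameter bound $D$ and the non-collapsing assumption \eqref{e:vnon} yields $\Vol_0(X_\infty)\geq v_0(n,D)$, and since for any bubble limit $Z$ one has $\Vol_0(Z)\geq \Vol_0(X_\infty)$ (as recorded in the excerpt), every cone limit $\mathcal C\in\mathfrak R$ satisfies $v_0\leq \Vol(\mathcal C)\leq 1$.

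Third, I would invoke the Donaldson--Sun two--step degeneration picture \cite{DS2} and the stable degeneration theorem of Li--Wang--Xu \cite{LWX} to conclude that each cone limit $\mathcal C\in\mathfrak R$ carries the structure of an $n$--dimensional K-polystable Fano cone singularity, and that its normalized volume in the sense of C.~Li coincides with $\Vol(\mathcal C)$ up to a dimensional constant. The final ingredient is a quantitative finiteness statement: the set of $n$--dimensional K-semistable Fano cone singularities with normalized volume bounded below by $v_0>0$ forms a bounded family (via developments in the spirit of \cite{XZ}), and on such a bounded family the normalized volume takes only finitely many values -- it is locally constant on the moduli of K-polystable cones of fixed dimension. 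Combining these statements with Steps 1 and 2 yields the desired cardinality bound $\mathfrak V(n,D)$ depending only on $n$ and $v_0(n,D)$.

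The main obstacle is this last step: one must justify the quantitative finiteness of normalized volumes of $n$--dimensional K-semistable Fano cones with a positive lower volume bound, and verify that the algebraic structure on each cone limit $\mathcal C\in\mathfrak R$ is compatible with the hypotheses of the relevant boundedness theorems (so that one may legitimately pass from the analytic description of bubble limits to the algebro--geometric moduli). These are substantial algebro--geometric inputs, and extracting a quantitative constant $\mathfrak V$ depending only on $n$ and $D$ -- as opposed to merely a non-effective finiteness -- is the most delicate aspect of the argument.
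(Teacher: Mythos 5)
Your reduction to cone limits and the uniform lower volume bound $v_0(n,D)$ via Bishop--Gromov match the paper's starting point, but from there you take a genuinely different route. The paper does not invoke boundedness of K-semistable Fano cone singularities; instead it gives a self-contained argument: the class $\mathfrak C$ of cone limits is compact in the enhanced Gromov--Hausdorff topology (by non-collapsing), and the volume is shown to be \emph{locally constant} on $\mathfrak C$ (Proposition~\ref{p:volume locally constant}). That local constancy is proved by embedding a convergent sequence of cone limits $\mathcal C_i\to\mathcal C$ into a common $\C^N$ via $L^2$-orthonormal homogeneous bases (Lemmas~\ref{l:semicontinuity}--\ref{l3-3}), placing them in a single multi-graded Hilbert scheme of Haiman--Sturmfels, using flatness and openness of normality and of rational singularities to trivialize the family algebraically, and then using the strict convexity of the Martelli--Sparks--Yau volume functional on the normalization hyperplane \eqref{e:hyperplane} to force $\xi_i=\xi$ and hence $\Vol(\mathcal C_i)=\Vol(\mathcal C)$. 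Your route instead outsources the finiteness to the identification $\Vol(\mathcal C)=n^{-n}\widehat{\mathrm{vol}}$ together with boundedness of K-semistable Fano cone singularities with normalized volume bounded below and finiteness of volumes in a bounded family. This is viable in principle, and it buys a statement that is purely algebro-geometric and independent of the bubble-limit origin of the cones; but be aware that (i) the boundedness-plus-discreteness input is a deep recent theorem whose proof is at least as involved as the paper's argument, and (ii) your parenthetical claim that the normalized volume is ``locally constant on the moduli of K-polystable cones'' is not a formal consequence of boundedness --- passing from a bounded family to finitely many volume values requires constructibility of the local volume in families, and in the analytic setting this local constancy is precisely the content of Proposition~\ref{p:volume locally constant}. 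So the ``main obstacle'' you flag is real and is exactly where the paper concentrates its effort; as written, your proposal defers rather than closes that step.
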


 \begin{remark}
 It is proved in \cite{MSY, DS2} that $\Vol_0(Z)$ is always an algebraic number, as a consequence of the volume minimization principle discovered by Martelli-Sparks-Yau \cite{MSY}. This was used in \cite{DS2} to show the rigidity of the holomorphic spectrum (and the volume density) in a continuous family of cone limits. The latter plays an indispensable role in the study of tangent cones and asymptotic cones for K\"ahler-Einstein metrics. However the algebraicity of volume does not seem to directly yield Theorem \ref{t:volume finite}. 
 \end{remark}
 \begin{remark}
 It is drawn to the attention of the author that Theorem \ref{t:volume finite} is related to algebro-geometric conjectures on the boundedness and volume of log terminal singularities, see for example \cite{HLL,Zhuang}. The above results give a possible analytic approach to these questions. 
 \end{remark}

 Our proof of Theorem \ref{t:volume finite} uses connections with algebraic geometry. It is natural to ask if the same statement holds in the more general Riemannian setting, see Section \ref{ss:5-2} for more discussion.

 \begin{definition}
 	A \emph{minimal bubble} associated to the above sequence is a bubble limit which is not a cone limit but is asymptotic to $\mathcal C_0(X_\infty)$. 
 \end{definition}

 Next we prove  
\begin{theorem}[Existence of minimal bubbles]\label{t:minimal bubbles}
There is a sequence of scales $\lambda_j\rightarrow\infty$ such that the bubble limits arising from subsequential limits of $(X_j, \lambda_j\omega_j, p_j)$ are minimal bubbles. Furthermore, for any sequence of scales $\lambda_j'\rightarrow\infty$ with $\lambda_j'\lambda_j^{-1}\rightarrow0$, $(X_j, \lambda_j'\omega_j, p_j)$  converges to  $\mathcal C_0(X_\infty)$ in the pointed Gromov-Hausdorff topology.\end{theorem}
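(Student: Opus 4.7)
The plan is to let $\lambda_j$ be the unique scale at which the normalized volume ratio at $p_j$ first reaches $\Vol_0(X_\infty)+\epsilon$, where $\epsilon$ is fixed strictly below the volume gap guaranteed by Theorem~\ref{t:volume finite}. Write $v_0 := \Vol_0(X_\infty) = \min \mathcal V(\mathfrak R)$ and fix $\epsilon \in (0, v_1 - v_0)$, where $v_1$ is the next element of $\mathcal V(\mathfrak R)$. Put $f_j(r) := \Vol_{\omega_j}(B(p_j, r))/\Vol(B_0(r))$: since $p_j$ is a smooth point of $X_j$ we have $f_j(r) \to 1$ as $r \to 0$, since $X_j \to X_\infty$ in pointed GH we have $f_j \to f_\infty$ pointwise, and $f_\infty(r) \to v_0$ as $r \to 0$ because $\mathcal C_0(X_\infty)$ is the tangent cone. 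Bishop--Gromov makes $f_j$ essentially non-increasing at the small scales we probe, so there is a unique $r_j$ with $f_j(r_j) = v_0 + \epsilon$, and $r_j \to 0$. Set $\lambda_j := r_j^{-2}\to \infty$ and, after a subsequence, let $Z := \lim (X_j, \lambda_j \omega_j, p_j)$; by construction $f_Z(1) = v_0 + \epsilon$.

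The volume gap then pins down $\Vol_\infty(Z)$. Monotonicity of $f_Z$ gives $\Vol_\infty(Z) \le v_0 + \epsilon$, while the excerpt gives $\Vol_\infty(Z) \ge v_0$ together with $\Vol_\infty(Z) = \Vol_0(\mathcal C_\infty(Z)) \in \mathcal V(\mathfrak R)$; since $[v_0, v_0 + \epsilon]\cap \mathcal V(\mathfrak R) = \{v_0\}$ by our choice of $\epsilon$, we conclude $\Vol_\infty(Z) = v_0$. Combined with $\Vol_0(Z) \ge v_0 + \epsilon > v_0 = \Vol_\infty(Z)$, the rigidity recalled in the excerpt implies $Z$ is not a cone. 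The same volume-gap argument handles the ``furthermore'' clause: given $\lambda_j' \to \infty$ with $\lambda_j'/\lambda_j \to 0$ and $s_j' := (\lambda_j')^{-1/2}$, we have $s_j'/r_j \to \infty$, so $R s_j' > r_j$ eventually and monotonicity forces $f_j(R s_j') \le v_0 + \epsilon$ for every fixed $R > 0$. Any subsequential limit $W = \lim(X_j, \lambda_j' \omega_j, p_j)$ thus satisfies $\Vol_0(W) = \Vol_\infty(W) = v_0$, so $W$ is a cone of minimum volume.

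What remains, and is the main obstacle, is to identify any such cone $W$ of volume $v_0$ with $\mathcal C_0(X_\infty)$ itself rather than with some other cone in $\mathfrak R$ sharing the minimum volume density, and thereby to conclude $\mathcal C_\infty(Z) = \mathcal C_0(X_\infty)$ by specializing to $\lambda_j' = \lambda_j/R_j^2$ for $R_j \to \infty$ slow enough. I would invoke the uniqueness of the metric tangent cone of $X_\infty$ at $p_\infty$ from~\cite{DS2}: for each fixed $s > 0$, $(X_j, s^{-2}\omega_j, p_j)$ is pointed-GH close to $(X_\infty, s^{-2}\omega_\infty, p_\infty)$ for $j$ large, and $(X_\infty, s^{-2}\omega_\infty, p_\infty) \to \mathcal C_0(X_\infty)$ as $s \to 0$. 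Since $s_j' \to 0$ while $s_j' \gg r_j$ keeps $s_j'$ in the slow regime above the bubble threshold, a two-scale diagonalization in $(j, s_j')$ annihilates both errors simultaneously. The delicate point is to verify that the rate $s_j' \to 0$ does not outpace the pointed GH convergence $X_j \to X_\infty$, which one enforces using that $r_j$ was defined precisely by the volume threshold, so that anything strictly slower than $r_j$ lies in the regime where the limit space $X_\infty$ dictates the local geometry.
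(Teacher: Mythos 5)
Your first two paragraphs are essentially the paper's own setup: the paper fixes $\epsilon$ with $[V-\epsilon,V+\epsilon]\cap\mathcal V(\mathfrak R)=\{V\}$, defines $\overline\lambda_j$ as the last scale at which the normalized volume ratio is within $\epsilon/4$ of $V=\Vol_0(X_\infty)$, and uses Bishop--Gromov plus the volume gap of Theorem \ref{t:volume finite} to conclude that the limit at the threshold scale is not a cone while every limit at a strictly slower diverging scale is a cone limit of volume density exactly $V$ (Lemma \ref{lem1.9}). That part of your argument is correct.

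The gap is in your last paragraph. You correctly identify the remaining obstacle --- showing that every cone limit arising at intermediate scales is $\mathcal C_0(X_\infty)$ itself and not merely some cone of the same volume density --- but the proposed two-scale diagonalization does not close it. The diagonalization $(X_j, (s_j')^{-2}\omega_j,p_j)\approx(X_\infty,(s_j')^{-2}\omega_\infty,p_\infty)\to\mathcal C_0(X_\infty)$ is only valid when $s_j'\to 0$ slowly enough relative to the (a priori unknown and possibly very slow) rate of the pointed Gromov--Hausdorff convergence $X_j\to X_\infty$: a GH approximation with error $\epsilon_j$ at unit scale becomes one with error $\epsilon_j/s_j'$ after rescaling, which can blow up. The theorem, however, demands the conclusion for \emph{every} sequence $\lambda_j'\to\infty$ with $\lambda_j'\lambda_j^{-1}\to0$, including scales in the regime between the convergence rate and $r_j$ where $X_\infty$ does not control the geometry of $X_j$. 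Your assertion that the volume threshold forces ``$X_\infty$ to dictate the local geometry'' at all slower scales is exactly what needs to be proved: the volume gap only pins down the volume density of the cone limits, and by Proposition \ref{p:volume locally constant} the volume density is locally constant on the space of cone limits, so it cannot distinguish $\mathcal C_0(X_\infty)$ from a putative continuous family of mutually non-isomorphic cones of the same volume sweeping through the intermediate scales. Ruling out such a family is the actual content of Theorem \ref{t:isolation}, and the paper's proof of it occupies the rest of Section 4: one realizes all the intermediate cone limits in a fixed multi-graded Hilbert scheme with a common Reeb field (Corollary \ref{c:locally constant} and Lemma 4.2), tracks the defining equations through the rescalings $\Gamma_{j,k}$, and then invokes the Matsushima reductivity of $\Aut(\mathcal C)$ together with the Bando--Mabuchi uniqueness and the group-theoretic Proposition 4.9 (limits of $g_j(t_j).x_j$ with reductive stabilizers lie in a single orbit) to conclude that $\mathcal C_0(X_\infty)$ is isolated in $\mathfrak C'$. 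None of this is reachable from the volume gap and tangent-cone uniqueness alone.
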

\begin{remark}
The second statement and the Bishop-Gromov inequality imply that a minimal bubble has the minimum volume density at infinity among all non-compact bubble limits and it is the unique such minimizer beside the tangent cone $\mathcal C_0(X_\infty)$. This justifies our choice of the terminology  ``minimal bubbles".
\end{remark}
To deduce Theorem \ref{t:minimal bubbles} from Theorem \ref{t:volume finite} the key step is to rule out the possible appearance of a continuous family of cone limits containing $\mathcal C_0(X_\infty)$. Our argument again makes crucial use of algebraic geometry. It can be viewed as an extension of the proof for the uniqueness of tangent cones  in \cite{DS2}. 

\begin{corollary}[Finite step termination of bubble tree] \label{c:bubble tree finiteness}
Given any subsequence of $\{j\}$, passing to a further subsequence the set of bubble limits of $(X_j, \omega_j, p_j)$ modulo metric scalings is a finite ordered set $\{(Z_\alpha, q_{\alpha})\}_{\alpha=1}^{2k}$ with $\Vol_0(Z_{\alpha})$ strictly increasing, 
$(Z_1, q_{1})=(X_\infty, p_\infty)$, $(Z_{2k}, q_{2k})=(\C^n, 0)$, and such that for all $\alpha$, $(Z_{2\alpha}, q_{{2\alpha}})$ is a cone limit, and $\mathcal C_0(Z_{{2\alpha-1}})=Z_{2\alpha}=\mathcal C_\infty(Z_{2\alpha+1})$. Here $k$ is uniformly bounded by an integer depending on $n$ and $D$.
	
\end{corollary}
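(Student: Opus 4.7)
The plan is to construct the bubble tree $\{(Z_\alpha, q_\alpha)\}_{\alpha=1}^{2k}$ inductively, alternating between forming metric tangent cones and extracting minimal bubbles, terminating once the tangent cone becomes $(\C^n, 0)$. The depth $k$ will be controlled by Theorem \ref{t:volume finite} together with the strict volume-density gain at each non-cone level.

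I would initialize $(Z_1, q_1) = (X_\infty, p_\infty)$ with trivial scales $\lambda^{(1)}_j = 1$. Suppose inductively that $(Z_{2\alpha-1}, q_{2\alpha-1})$ has been realized as a subsequential pointed Gromov--Hausdorff limit of $(X_j, \lambda^{(2\alpha-1)}_j \omega_j, p_j)$. I would set $(Z_{2\alpha}, q_{2\alpha}) := (\mathcal C_0(Z_{2\alpha-1}), 0)$, which is a cone limit by the uniqueness of tangent cones from \cite{DS2}; if $Z_{2\alpha} \cong \C^n$, stop and set $k = \alpha$. Otherwise $q_{2\alpha-1}$ is singular in $Z_{2\alpha-1}$, and I would apply Theorem \ref{t:minimal bubbles} iteratively --- in the form appropriate to the rescaled convergence $(X_j, \lambda^{(2\alpha-1)}_j \omega_j, p_j) \to Z_{2\alpha-1}$ --- to obtain intermediate scales $\mu_j \to \infty$ and, via a diagonal subsequence argument, realize a minimal bubble $(Z_{2\alpha+1}, q_{2\alpha+1})$ with $\mathcal C_\infty(Z_{2\alpha+1}) = Z_{2\alpha}$, appearing at scale $\lambda^{(2\alpha+1)}_j := \mu_j \lambda^{(2\alpha-1)}_j$ in the original sequence.

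For termination and completeness: since $Z_{2\alpha+1}$ is not a cone, Bishop--Gromov gives $\Vol_0(Z_{2\alpha+1}) > \Vol_\infty(Z_{2\alpha+1}) = \Vol(Z_{2\alpha}) = \Vol_0(Z_{2\alpha-1})$, so the odd-indexed values strictly increase in $\alpha$ and therefore lie in the finite set guaranteed by Theorem \ref{t:volume finite}, yielding $k \leq \mathfrak V(n,D)$; the loop can only exit via $Z_{2k} \cong \C^n$. To exhaust all bubble limits, given any $\nu_j \to \infty$ producing a bubble limit $Z$, I would (after passing to a subsequence) compare $\nu_j$ to each $\lambda^{(2\alpha-1)}_j$: if $\nu_j/\lambda^{(2\alpha-1)}_j$ is bounded and bounded away from zero then $Z$ is a metric scaling of $Z_{2\alpha-1}$, while if $\nu_j$ lies strictly between two consecutive bubble scales then the second assertion of Theorem \ref{t:minimal bubbles}, now applied on the bubble level, forces $Z$ to coincide with the intermediate cone $Z_{2\alpha}$.

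The hard part will be making the iterative use of Theorem \ref{t:minimal bubbles} at each bubble level rigorous: one must check that the ``no bubble below the minimal scale'' half of Theorem \ref{t:minimal bubbles} is stable under replacing the polarized compact sequence with a non-compact Calabi--Yau bubble limit, and then concatenate all the diagonal subsequence extractions into a single subsequence of $\{j\}$; the latter is possible precisely because $k$ is a priori uniformly bounded thanks to Theorem \ref{t:volume finite}.
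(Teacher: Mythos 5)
Your proposal follows essentially the same route as the paper: the same alternating induction (tangent cone, then minimal bubble via Theorem \ref{t:minimal bubbles}), with termination forced by the strict increase of volume densities at each non-cone level together with Theorem \ref{t:volume finite}, and with exhaustiveness of the list obtained by comparing an arbitrary scale sequence to the constructed bubble scales. The one point you flag as the hard part --- iterating Theorem \ref{t:minimal bubbles} at the level of a non-compact bubble --- is exactly what the paper's (very brief) proof addresses: the minimal bubble $Z_3$ arrives equipped with explicit defining equations, since the limits of the transformed functions $\mathcal L_j. F^b_{j,\overline\lambda_j}$ from Section 4 generate its ideal in $\C^N$, which is what allows the whole Section 4 machinery to be re-run in a neighborhood of $q_3$.
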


 It is a natural intriguing question to ask for algebro-geometric interpretations of these minimal bubbles. See Section \ref{ss:5-1} for discussion.

\subsection*{Acknowledgements} The author is grateful to Professor Simon Donaldson and Cristiano Spotti for helpful conversations on the topics of this article some years ago at Stony Brook.  He would like to thank Xuemiao Chen and Junsheng Zhang for relevant discussions in related context.  He also thanks Martin de Borbon, Cristiano Spotti and Junsheng Zhang for valuable comments that improved the exposition. The author is partially supported by NSF Grant DMS-2004261 and the Simons Collaboration Grant in special holonomy.

\section{Preliminaries}
\subsection{Projective and affine algebraic structures}
\label{ss2-1}
Let $(X_j, L_j, \omega_j, p_j)$ be given as in the introduction. Let $\lambda_j\geq1$ be a sequence of integers and consider the rescaled sequence $(X_j, L_j^{\lambda_j},\lambda_j\omega_j, p_j)$. From \cite{DS1, DS2}, after passing to a subsequence we have the following

\

\textbf{(1).} There is a pointed Gromov-Hausdorff limit $(Z, d_Z, p_Z)$. The latter is a complete length space which is equipped with a  regular-singular decomposition $Z=Z^{reg}\cup Z^{sing}$ in the sense of Cheeger-Colding. The regular set $Z^{reg}$ is an open subset which is a smooth complex manifold and the limit metric $d_Z$ is induced by  a smooth K\"ahler-Einstein metric $\omega_Z$ on $Z^{reg}$. The convergence over $Z^{reg}$ is in the smooth Cheeger-Gromov sense. 
	
	\textbf{(2).}  $Z$ is a normal complex-analytic space with structure sheaf given by $\iota_*\cO_{Z^{reg}}$, where $\iota: Z^{reg}\rightarrow Z$ is the natural inclusion map. Moreover,  $\omega_Z$ extends to a global closed positive $(1,1)$ current with locally continuous potentials. The metric singular set $Z^{sing}$ coincides with the complex-analytic singular set of $Z$. 
	
	\textbf{(3).}  $Z$ has only log terminal singularities and $\omega_Z$ satisfies the weak K\"ahler-Einstein equation globally  on $Z$.

	\textbf{(4).}  Fix a sequence of pointed Gromov-Hausdorff approximations realizing the convergence (as in \cite{DS2}, see also the discussion at the end of this subsection). Then there are $0<\lambda_2<\lambda_1\leq1$ (depending only on $n$ and $D$) such that given any holomorphic function $f$ on $B(q, \lambda_1)\subset Z$ and $q_j\in X_j$ converging to $q$, for $j$ large there is a holomorphic function $f_j$ on $B(q_j, \lambda_2)\subset X_j$ that naturally converges to $f$. 

\begin{remark}
Item (4) says that the convergence is complex-analytic in a weak sense. It implies that locally one can define a holomorphic map $F_j: B(q_j, \lambda)\rightarrow \C^N$ for all $j$ large, which is generically one-to-one (so is a normalization map) and converges naturally to a holomorphic embedding $F_\infty: B(q_\infty, \lambda)\rightarrow\C^N$. A non-trivial question is whether $F_j$ can be made an embedding for $ j$ large. Moreover, as remarked in \cite{DS2}, it is not yet known how to describe the convergence of the images of $F_j$ in a certain ``flat" family. For example, are locally the defining equations for the image of $F_j$ given by small perturbations of the defining equations of $Z$? This is a technical point that will be addressed in our special setting in this paper; in general it seems to be a missing link between geometric analysis and complex analytic geometry, which is worth further investigation.
\end{remark}

If $Z$ is compact then it is  simply given by a rescaling of the original limit space $X_\infty$. In this case by \cite{DS1} we know that (possibly passing to a further subsequence) there is a limit hermitian holomorphic line bundle $L_\infty$ over $X_\infty^{reg}$, which extends as an ample $\mathbb Q$-line bundle to the entire $X_\infty$. The latter makes $X_\infty$ a projective variety. Moreover, the convergence $X_j\rightarrow X_\infty$ can be realized algebro-geometrically as the convergence in a fixed Hilbert scheme of projective varieties in some $\mathbb C\mathbb P^M$.  These results have applications to the study of moduli of Fano manifolds, see \cite{SSY, LWX1, Odaka}.

If $Z$ is non-compact, then by \cite{DS2} we have

\begin{itemize}
	\item $Z$ is a normal affine variety whose coordinate ring $R(Z)$ is given by the space of holomorphic functions with polynomial growth at infinity.
	\item For any non-zero $f\in R(Z)$, it has a well-defined degree at infinity
	\begin{equation}\label{e:degree at infinity}\deg(f)\equiv \lim_{r\rightarrow\infty}\frac{\log \sup_{B(p_Z, r)} |f|}{\log r}\geq0.	
	\end{equation}

\end{itemize}

	For later purpose we recall slightly more details on the enhanced (pointed) Gromov-Hausdorff topology we use in this paper, which incorporates the convergence of complex structures (see \cite{DS2} for more details). Given  metric balls $B\subset Z, B'\subset Z'$ for $Z, Z'\in \mathfrak R$,  we define $d_{EGH}(B, B')$ to be the smallest $\epsilon>0$ such that the following holds: there is a metric $d$ on the disjoint union $B\sqcup B'$ extending the metrics on $B$ and $B'$, such that there are open subsets $U\subset B^{reg}, U'\subset B'^{reg}$ which are both $\epsilon$ dense in $B\sqcup B'$ and there exists a diffeomorphism $\chi: U\rightarrow U'$ with $d(x, \chi(x))\leq \epsilon$ and satisfying $$\|\chi^*g'-g\|_{C^2(U)}+\|\chi^*J'-J\|_{C^2(U)}\leq\epsilon,$$
	where $(g, J)$ and $(g', J')$ denote the K\"ahler structure on $U$, $U'$ respectively and $C^2(U)$ is the $C^2$ norm on $U$ defined by the metric $g$. When no such $\epsilon>0$ exists we simply define $d_{EGH}(B, B')=1$. Notice $d_{EGH}$ is not necessarily a metric on $\mathfrak R$ but we do not need this property. It is clear from the definition that $d_{EGH}(B, B')=0$ if and only if there is a holomorphic isometry between $B$ and $B'$. We say a sequence $B_j$ converges to $B_\infty$ if $d_{EGH}(B_j, B_\infty)\rightarrow0$ as $j\rightarrow\infty$. 
	More generally, we say that a sequence $(Z_j, p_{Z_j})\in \mathfrak R$ converges to a limit $(Z_\infty, p_{Z_\infty})$ if for any $R>0$, $d_{EGH}(B(p_{Z_j}, R), B(p_Z, R))\rightarrow 0$ as $j\rightarrow\infty$.

\subsection{Cone limits}\label{ss2-2}
Recall that we denoted by $\mathfrak R$ the class of all bubble limits associated to the sequence $(X_j, L_j, \omega_j, p_j)$, and an element  $(\mathcal C, O)$ in $\mathfrak R$ is a \emph{cone limit} if $\mathcal C$ is a metric cone with $O$ as the vertex.
 We denote by $\mathfrak C$ the subclass of $\mathfrak R$ consisting of all cone limits. Given a metric cone $\mathcal C$ we denote by ${B_{\mathcal C}}$ the unit ball in $\mathcal C$ centered at $O$. Due to the cone structure one sees that the enhanced pointed Gromov-Hausdorff topology on $\mathfrak C$ is induced by the enhanced Gromov-Hausdorff topology on the unit balls $B_{\mathcal C}$.  Namely, a sequence of cone limits $\mathcal C_j$ converge to $\mathcal C$ if and only if  $d_{EGH}(B_{\mathcal C_j}, B_{\mathcal C})\rightarrow 0$. 
 
  Differential geometrically the cone structure is reflected by the dilation invariance. Over the regular set of a cone limit $\mathcal C$ there is a dilation vector field  given by $r\p_r$ (where $r$ is the distance function to $O$). The \emph{Reeb vector field} $\xi\equiv J r\p_r$ is a holomorphic Killing field, which generates a holomorphic isometric action of a compact torus $\mathbb T$ on $\mathcal C$. In \cite{DS2} we discussed the algebro-geometric interpretation of the cone structure. In particular we have

\textbf{(1).} $\mathcal C$ is a  polarized normal affine cone. By this we mean that there is a positive $\R$-grading on the coordinate ring 
	\begin{equation}\label{e:grading}R(\mathcal C)=\bigoplus_{\mu\in \mathcal S(\mathcal C)} R_\mu(\mathcal C),	
	\end{equation}
where $\mathcal S(\mathcal C)\subset \mathbb R_{\geq0}\cap \{\langle\alpha, \xi\rangle|\alpha\in \Gamma^*\}$ for $\Gamma^*\subset Lie(\mathbb T)^*$ the weight lattice of $\mathbb T$.  As in \cite{DS2}, we call $\cS(\Ca)$ the \emph{holomorphic spectrum} of $\mathcal C$. In this paper we will denote 
$$R_I(\mathcal C)=\bigoplus_{\mu\in I}R_\mu(\mathcal C)$$
for any interval $I$.

\textbf{(2).} $f\in R_\mu(\mathcal C)$ if and only if $f$ is homogeneous of degree $\mu$, i.e., $\mathcal L_{\xi}f=\sqrt{-1}\mu f$. For $f\in R(\mathcal C)$, $\deg(f)=\mu$ if and only if the highest degree term in the expansion of $f$ with respect to \eqref{e:grading} has degree $\mu$.

\textbf{(3).}  The $\mathbb R$-grading is induced by a positive multi-grading 
	$$R(\mathcal C)=\bigoplus_{\alpha\in 
	\Gamma^*}\mathcal H_\alpha(\mathcal C),$$
	where  $\mathcal H_{\alpha}(\mathcal C)=R_\mu(\mathcal C)$ for $\mu=\langle\alpha, \xi\rangle$.

 	\textbf{(4).} A choice of a homogeneous generating set of $R(\mathcal C)$ gives rise to an algebraic embedding of $\mathcal C$ as an affine variety in some $\C^N$ and the Reeb vector field $\xi$ extends to a  linear diagonal vector field $\xi=Re(\sqrt{-1}\sum_\alpha\zeta_\alpha z_\alpha\p_{z_\alpha}) $ on $\C^N$ such that each $\zeta_\alpha=\deg(z_\alpha)>0$.

Obviously the decomposition \eqref{e:grading} is orthogonal with respect to the $L^2$ inner product on $R(\mathcal C)$ defined by 
\begin{equation}\label{e:L2}\langle f_1, f_2\rangle\equiv \int_{\mathcal C} f_1\bar f_2e^{-\frac{r^2}{2}} d\mu.	
\end{equation}
 We define the Hilbert function of $\mathcal C$ to be $h_{\mathcal C}: \R_{\geq0}\rightarrow \mathbb Z_{\geq 0}; \mu\mapsto\dim R_\mu(\mathcal C)$.

Now we fix an embedding of $\mathcal C$ in some $\C^N$ as given in item (4) above. Denote by $\mathbb S$ the coordinate ring of $\C^N$. We will make use of the theory of multi-graded Hilbert schemes developed by Haiman-Sturmfels \cite{HS}. There is a projective scheme $\Hilb$ parameterizing affine schemes of the form $\text{Spec}(\dS/\mathcal I)$, where $\mathcal I$ is a $\xi$-homogeneous ideal with Hilbert function $h_{\mathcal I}(d)\equiv\dim (\dS/\mathcal I)_d=h_{\mathcal C}(d)$ for all $d\geq 0$. In particular $\mathcal C$ corresponds to a point $[\mathcal C]\in \Hilb$. By the construction in \cite{HS}, there is a flat universal family over $\Hilb$ and a boundedness result is proved using the theory of Gr\"obner basis. Namely, there exists a $\mu_0=\mu_0(\mathcal C)$ depending only on $\mathcal C$ (more precisely, on the function $h_{\mathcal C}$) such that if a $\xi$-homogeneous ideal $\mathcal I$ satisfies that $h_{\mathcal I}(d)=h(d)$ for $d\leq \mu_0$, then $h_{\mathcal I}(d)=h(d)$ for all $d$. Let $GL_\xi(N)$ be the subgroup of $GL(N;\mathbb C)$ that preserves $\xi$. It naturally acts on $\Hilb$. The stabilizer group of $[\mathcal C]$ agrees with $\Aut(\mathcal C)$, the group of holomorphic transformations of $\mathcal C$ fixing the vertex $O$ and the cone structure.

The above discussion does not make use of the weak Calabi-Yau metric on $\mathcal C$. A crucial consequence of the latter is the volume minimization property of Martelli-Sparks-Yau (see \cite{MSY, DS2}). In our setting we know $\mathcal C$ has $\mathbb Q$-Gorenstein singularities, thus it is a \emph{Fano cone} in the sense of \cite{CoSz}.  This means that there exits a non-vanishing holomorphic section $\Omega$ of $K_{\mathcal C}^l$ (for some positive integer $l$) in a neighborhood $\mathcal U$ of $O$. We may assume $\mathcal U$ is $\mathbb T$ invariant. Now $\mathcal L_{\xi}\Omega=f\Omega$ for some holomorphic function $f$ on $\mathcal U$. As is explained in \cite{DS2} we may perform a weight decomposition $f=\sum_{\alpha}f_\alpha$, where $\mathcal L_\xi f_\alpha=\sqrt{-1}\langle\alpha, \xi\rangle f_\alpha$. Set $$F=\exp(\sum_{\langle\alpha, \xi\rangle\neq 0}-\frac{\sqrt{-1}}{\langle\alpha, \xi\rangle}f_\alpha),$$ then replacing $\Omega$ by $F\Omega$ we may assume $f$ is a constant. Since $\mathbb T$ is compact it follows that $f$ is an imaginary constant.


Given a compact torus $\mathbb T'$ acting on $\mathcal C$ holomorphically such that $\xi\in Lie(\mathbb T')$.  For any $\xi'\in Lie(\mathbb T')$, we may write $\mathcal L_{\xi'}\Omega=f'\Omega$ for a holomorphic function $f'$. Using the fact that $[\xi, \xi']=0$ it follows that $\xi(f')=0$. So $r\p_r(f')=0$. Then $f'$ is globally bounded hence must be a constant. Then we get a linear map $$c: Lie(\mathbb T')\rightarrow \mathbb R$$ such that $\mathcal L_{\xi'}\Omega=\sqrt{-1}c(\xi')\Omega$. 

Now the Calabi-Yau cone metric $\omega_{\mathcal C}$ on $\mathcal C$ satisfies the equation $$\omega_{\mathcal C}^n=e^{U}(\Omega\wedge\overline\Omega)^{1/l},$$
 where $U$  is pluri-harmonic.  Since $\mathcal L_{\xi}\omega_{\mathcal C}=2\omega_{\mathcal C}$, it follows that $$\mathcal L_\xi U=\sqrt{-1}(2n-2c(\xi)l^{-1}).$$ Hence $U$ must be a constant and then 
 \begin{equation} \label{e:normalization}
 	c(\xi)=nl. 
 \end{equation}
  Denote by 
  \begin{equation}\label{e:hyperplane}
  \mathbb H=\{\xi'\in Lie(\mathbb T')|c(\xi')=nl\}, 
  \end{equation}
and by $Lie(\mathbb T')^+$ the set of $\xi'$ such that $\langle\alpha, \xi'\rangle\geq 0$ whenever $\mathcal H_\alpha\neq0$.   Then we have the volume minimization principle  \cite{MSY, DS2}
\begin{itemize}
	\item There is a volume function $$\mathbb V: Lie(\mathbb T')^+\cap \mathbb H\rightarrow \mathbb R_{>0}$$ which 
	is determined by the function $h_{\mathcal C}$ in terms of the index character.  It is a rational function with rational coefficients and $\mathbb V(\xi)$ coincides with the Riemannian volume of the cross section of the cone $\mathcal C$.
	\item $\mathbb V$ is strictly convex. 
	\item $\xi$ is a critical point of $\mathbb V$. 
\end{itemize}
As a consequence we know that $\xi$ and $\mathbb V(\xi)$ are both algebraic.
We will need two results proved in \cite{DS2}:
\begin{itemize}
	\item (Matsushima theorem) For any cone limit $\mathcal C$, the automorphism group $\Aut(\mathcal C)$ is reductive. More precisely, it is given by the complexification of the holomorphic isometry group of $\mathcal C$ fixing the vertex and the cone structure. 
	\item (Bando-Mabuchi theorem) The weak Calabi-Yau metric on a cone limit is uniquely determined by the underlying Fano cone structure, up to holomorphic isometries that preserve the Reeb vector field.
\end{itemize}

\subsection{Algebraic geometry of tangent cones and asymptotic cones} \label{ss2-3}

Given any bubble limit $(Z, \omega_Z, p_Z)\in \mathfrak R$, we consider the rescaled spaces $(Z, \lambda \omega_Z, p_Z)$ for $\lambda\in (0, \infty)$. As $\lambda\rightarrow\infty$, passing to subsequences we obtain bubble limits which are \emph{tangent cones} at $p_Z$.  It is proved in \cite{DS2} that in our setting the tangent cone is unique, which we denote by $(\mathcal C_0(Z), O)$.

More interestingly, in \cite{DS2} it is established a 2-step degeneration theory, i.e.,  $\mathcal C_0(Z)$ can be described algebro-geometrically in terms of a  degeneration from the local germ at $p_Z$ in two steps. There is a valuation on the local ring $\mathcal O_{Z, p_Z}$ defined by the degree function 
$$\deg_{p_Z}(f)\equiv\lim_{r\rightarrow0}\frac{\log\sup_{B(p_Z, r)}|f|}{\log r}\in \mathcal S(\mathcal C_0(Z)).$$
Denote by $R(p_Z)$ the associated graded ring. Then $W=\text{Spec}(R(p_Z))$ defines a Fano cone and there is then a further equivariant degeneration from $W$ to $\mathcal C_0(Z)$.

There is also a dual picture for $\lambda\rightarrow 0$ when $Z$ is non-compact. In this case it follows from \cite{DS2} that $(Z, \lambda\omega_Z,p_Z)$ converges to a unique asymptotic cone $\mathcal C_\infty(Z)$. There is a negative valuation on $R(Z)$ (in the sense of \cite{SZ}) which is related to the degree function by
$$\nu(f)=-\deg(f)=-\lim_{r\rightarrow\infty}\frac{\log\sup_{B(p_Z, r)}|f|}{\log r}\in \mathcal -\mathcal S(\mathcal C_\infty(Z)).$$
Denote by $R_\infty(Z)$ the graded ring associated to the filtration defined by the negative valuation. Then $W_\infty=\text{Spec}(R_\infty(Z))$ defines a Fano cone and there is an equivariant degeneration from $W_\infty$ to $\mathcal C_\infty(Z)$. 

In both cases   $\mathcal C_0(Z)$ and $\mathcal C_\infty(Z)$ can be viewed as K-polystable Fano cones, whereas $W$ and $W_\infty$ can be viewed as intermediate K-semistable Fano cones. For the definitions of stabilities used here one is referred to for example \cite{CoSz, LWX}.
However, there is a sharp contrast between the local and the asymptotic situation. In the local case, it is conjectured in \cite{DS2} that both $W$ and $\mathcal C_0(Z)$ are  invariants of the algebraic singularity of $Z$ at $p_Z$, which should lead to a local notion of stability for general singularities. This conjecture is confirmed in \cite{LX} and \cite{LWX},  based on the earlier work of Li \cite{CLi} which further reformulated and generalized the original conjecture in \cite{DS2} using more algebro-geometric language and the volume minimization principle of Martelli-Sparks-Yau \cite{MSY}. As a result, one can compute both $W$ and $\mathcal C_0(Z)$ in certain explicit examples; in particular, there are examples where $W$ and $\mathcal C_0(Z)$ are genuinely distinct. In the asymptotic setting,  it is the case that neither $W_\infty$ nor $\mathcal C_\infty(Z)$ is an invariant of the affine variety $Z$ only. For example, even $\mathbb C^3$ admits a complete Calabi-Yau metric with non-flat asymptotic cone (see \cite{YLi, CR, Gabor1}).  Moreover, there is a new phenomenon  discovered in \cite{SZ}, which is referred to as ``no semistability at infinity". The latter says that in this case $W_\infty$ must be equal to $\mathcal C_\infty(Z)$ if $\mathcal C_\infty(Z)$ has a smooth cross section; this is conjectured to be always true  in general. 

\section{Discreteness of volumes}
In this section we prove Theorem \ref{t:volume finite}. We start with

\begin{lemma} \label{l:semicontinuity}
	Let $\mathcal C\in \mathfrak C$ be a cone limit and $\mu\in \mathcal S(\mathcal C)$. For any $\epsilon>0$ small, there exists a neighborhood $\mathcal U$ of $\mathcal C$ in $\mathfrak C$ such that for any cone limit $\mathcal C'$ in $\mathcal U$, it holds that $$\dim R_{(\mu-\epsilon, \mu+\epsilon)} (\mathcal C')=\dim R_{\mu}(\mathcal C). $$

	\end{lemma}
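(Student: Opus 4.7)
The plan is to prove the two inequalities $\dim R_{(\mu-\epsilon,\mu+\epsilon)}(\mathcal C') \geq \dim R_\mu(\mathcal C)$ and $\leq \dim R_\mu(\mathcal C)$ separately. In both directions the main tool is to combine the holomorphic convergence from item (4) of Section \ref{ss2-1} with the cone dilation and the Reeb $S^1$-action to pass between local holomorphic data on the unit ball and global weighted data on the cone. First I would shrink $\epsilon$ so that $\mathcal S(\mathcal C) \cap [\mu-2\epsilon, \mu+2\epsilon] = \{\mu\}$, which is possible since the holomorphic spectrum is discrete. Set $k = \dim R_\mu(\mathcal C)$ and let $\mathcal C_j \to \mathcal C$ be an arbitrary sequence in $\mathfrak C$; after passing to a subsequence we have smooth Cheeger--Gromov convergence on regular sets and the Reeb fields $\xi_j$ of $\mathcal C_j$ converge locally to $\xi$.

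For the lower bound, I would take an $L^2$-orthonormal basis $f_1, \dots, f_k$ of $R_\mu(\mathcal C)$ with respect to the Gaussian pairing \eqref{e:L2}, and use item (4) together with smooth convergence on the regular set to construct holomorphic approximations $\hat f_{i,j}$ on a ball $B(p_{\mathcal C_j}, r) \subset \mathcal C_j$. Fourier-averaging $\hat f_{i,j}$ along the $S^1$-flow generated by $\xi_j$ decomposes it into $\xi_j$-weight components, each of which extends to a polynomial on $\mathcal C_j$ by the cone dilation. Since $f_i$ is of pure $\xi$-weight $\mu$ and $\xi_j \to \xi$, the $L^2$ mass of $\hat f_{i,j}$ carried by weights outside $(\mu-\epsilon, \mu+\epsilon)$ tends to zero, so the in-range projections give $k$ elements of $R_{(\mu-\epsilon, \mu+\epsilon)}(\mathcal C_j)$ whose Gram matrix approaches the identity; they are linearly independent for $j$ large.

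For the upper bound, I would argue by contradiction: suppose along a subsequence there exist $L^2$-orthonormal families $g_{1,j}, \dots, g_{k+1,j}$ in $R_{(\mu-\epsilon,\mu+\epsilon)}(\mathcal C_j)$. Each $g_{i,j}$ has growth degree at most $\mu + \epsilon$ at infinity by \eqref{e:degree at infinity}, so combining this polynomial-growth bound with the Gaussian normalization and the sub-mean-value inequality for holomorphic functions yields a uniform $L^\infty$ bound on every fixed ball $B(p_{\mathcal C_j}, R)$. Smooth convergence on regular sets together with item (4) then allow extraction of $C^\infty_{\mathrm{loc}}$ subsequential limits $g_1, \dots, g_{k+1}$ on $\mathcal C$, each holomorphic with growth degree in $[\mu-\epsilon, \mu+\epsilon]$. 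By our choice of $\epsilon$ each $g_i$ then lies in $R_\mu(\mathcal C)$. Gaussian $L^2$-orthonormality passes to the limit by dominated convergence using the uniform polynomial growth bound, producing $k+1$ orthonormal elements of $R_\mu(\mathcal C)$, a contradiction.

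The hard part will be the upper-bound half: the Gaussian $L^2$-normalization alone does not prevent concentration on small scales or escape of mass to infinity, so the polynomial-growth restriction imposed by the $\xi_j$-weight condition must be exploited carefully to obtain uniform pointwise bounds on fixed balls and the dominated-convergence argument for inner products. One must also verify that weight components whose $\xi_j$-weight drifts toward the endpoints of $(\mu-\epsilon,\mu+\epsilon)$ cannot produce limit weights outside $\{\mu\}$; this is ensured by the discreteness of $\mathcal S(\mathcal C)$ near $\mu$ together with the continuity $\xi_j \to \xi$ of the Reeb fields.
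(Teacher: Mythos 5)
Your proposal is correct and follows essentially the same route as the paper: the lower bound by transplanting a basis of $R_\mu(\mathcal C)$ via the holomorphic approximation of item (4), decomposing into $\xi_i$-weight components, and showing the out-of-range part has vanishing $L^2$ mass; the upper bound by taking limits of an $L^2$-orthonormal homogeneous basis of $R_{(\mu-\epsilon,\mu+\epsilon)}(\mathcal C_i)$ to produce too many orthonormal elements of $R_\mu(\mathcal C)$. The paper packages both halves into a single contradiction argument, but the content is the same, and the compactness issues you flag in the upper bound are handled there exactly as you suggest, via homogeneity and the bounded degree range.
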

\begin{proof}
Suppose otherwise, then we may find $\epsilon>0$ with $[\mu-\epsilon, \mu+\epsilon]\cap \mathcal S(\mathcal C)=\{\mu\}$ and a sequence $(\mathcal C_i, O_i)\in \mathfrak S$ converging to $\mathcal C$ such that 
$\dim R_{(\mu-\epsilon, \mu+\epsilon)} (\mathcal C_i)\neq\dim R_{\mu}(\mathcal C)$ for all $i$.

We first claim this implies that for $i$ large $\dim R_{(\mu-\epsilon, \mu+\epsilon)} (\mathcal C_i)<\dim R_{\mu}(\mathcal C).$ Suppose otherwise, then we can choose a homogeneous orthonormal basis of $R_{ (\mu-\epsilon, \mu+\epsilon)}(\mathcal C_i)$ with respect to the $L^2$ inner product defined by \eqref{e:L2}. Passing to a subsequence we may take limits of the basis elements to obtain an $L^2$ orthonormal set of elements in $R_{\mu}(\mathcal C)$ whose cardinality is strictly bigger than $\dim R_{[\mu-\epsilon, \mu+\epsilon]}(\mathcal C)$. This is a contradiction.

	Given a homogeneous holomorphic function $f\in R_\mu(\mathcal C)$, by the discussion in Section \ref{ss2-1} for $i$ large we may find a holomorphic function $f_i$ on the unit ball $B_{\mathcal C_i}$ that naturally converges to $f$. Now using the cone structure on $\mathcal C_i$ we may write $f_i=\sum_{\alpha} f_{i, \alpha}$, where each $f_{i, \alpha}$ is homogeneous with respect to $\xi_i$ with weight $\alpha$. The decomposition is $L^2$ orthogonal so in particular each term $f_{i, \alpha}$ has uniformly bounded $L^2$ norm on $B_i$. We may write $f_{i}=u_{i}+v_{i }$, where $u_{i}$ consists of homogeneous terms of degree in $[\mu-\frac{\epsilon}{2}, \mu+\frac{\epsilon}{2}]$, and $v_{i}$ consists of homogeneous terms of degree not in $[\mu-\frac{\epsilon}{2}, \mu+\frac{\epsilon}{2}]$. It follows that $\|v_i\|_{L^2(B(O_i, \frac{3}{4}))}$ converges to 0  as $i\rightarrow\infty$. So we know $u_i$ converges to $f$ uniformly in the half size ball. 	Applying this construction to a basis of $R_\mu(\mathcal C)$ would imply that for $i$ large $\dim R_{(\mu-\epsilon, \mu+\epsilon)} (\mathcal C_i)\geq\dim R_{\mu}(\mathcal C).$ Contradiction. 
	\end{proof}

Now we suppose a sequence of cone limits $\mathcal C_i\in \mathfrak C$ converge to a cone limit $\mathcal C$. Our next goal is to realize this convergence algebraically.  
First of all, we choose $\underline\mu\notin \mathcal S(\mathcal C)$ large so that $R(\mathcal C)$ is generated by $R_{[0, \underline\mu)}(\mathcal C)$. 
By Lemma \ref{l:semicontinuity} we may choose $\epsilon>0$ small such that $\dim R_{[0,\underline\mu+\epsilon)}(\mathcal C_i)=\dim R_{[0, \underline\mu)}(\mathcal C)$  for $i$ large. Now we choose an $L^2$ orthonormal basis of  $R_{[0, \underline\mu+\epsilon)}(\mathcal C_i)$. Passing to a subsequence we may assume that they converge to an $L^2$ orthonormal basis of $R_{[0, \underline\mu)}(\mathcal C)$. Using these functions we obtain holomorphic maps $F_i: \mathcal C_i\rightarrow \C^N$ which converge naturally to $F:\mathcal C\rightarrow\C^N$ under the Gromov-Hausdorff convergence. By the choice of $\underline\mu$ we know $F$ is an algebraic embedding onto a normal affine variety $W\subset \C^N$.
It follows similar to \cite{DS2} that  for $i$ large $F_i$ is generically one-to-one onto a reduced affine variety $W_i\subset \C^N$. Hence $F_i$ is the normalization map. In particular, it is an isomorphism if we know $W_i$ is normal. Clearly $W_i$ converges in the (local) Hausdorff sense to $W$.

By construction $W_i$ is invariant under a linear vector field $\xi_i$ on $\C^N$ which converges to $\xi$ as $i\rightarrow\infty$. It is easy to see that by suitable linear transformations on $\C^N$ (i.e., by choosing a basis of $R_{[0, \underline\mu+\epsilon)}(\mathcal C_i)$ consisting of $\xi_i$-homogeneous functions) we may assume $\xi$ and each $\xi_i$  are in fact diagonalized with respect to the standard coordinates on $\C^N$.

Fix a Euclidean metric $\omega_0$ on $\C^N$ so that it induces a weighted $L^2$ inner product on the space of all polynomial functions: 
\begin{equation}\label{e:L22}\langle F_1, F_2\rangle =\int_{\C^N} F_1\overline F_2e^{-\frac{|z|^2}{2}}\omega_0^n.	
\end{equation}

Now we set some notations. We say  a holomorphic polynomial function $f\in \mathbb S$ is $\xi_i$ (reps. $\xi$)-homogeneous if $\mathcal L_{\xi_i}f=\sqrt{-1} \mu f$ (resp.$\mathcal L_{\xi}f=\sqrt{-1} \mu f$)  for some $\mu\geq 0$; in this case  we denote $\deg_{\xi_i}(f)=\mu$ (resp. $\deg_\xi(f)=\mu$). Clearly a monomial is both $\xi$-homogeneous and $\xi_i$-homogeneous. Given a polynomial $f$, its weighted degree with respect to $\xi_i$ (resp. $\xi$) is defined in the obvious way using the weighted degree on monomials in terms of $\deg_{\xi_i}$ (resp. $\deg_\xi$). 
We denote by $\mathbb S_{i,\mu}$ (resp. $\mathbb S_\mu$) the space of polynomial functions on $\C^N$ whose weighted degree with respect to $\xi_i$ (resp. $\xi$) is at most $\mu$, by $\mathcal I_i$ (resp. $\mathcal I$) the ideal of $\mathbb S$ consisting of polynomial functions on $\C^N$ which vanish on $W_i$ (resp. $W$) and by $\mathcal I_{i,\mu}$ (resp. $\mathcal I_\mu$) the  finite dimensional subspace of $\mathcal I_i$ (resp. $\mathcal I$) consisting of those polynomials $f$ with weighted degree at most $\mu$ with respect to $\xi_i$ (resp. $\xi$).

Let $\mu_0=\mu_0(\mathcal C)$ be given in Section \ref{ss2-2}.  Choose $\mu_1>\max(\underline\mu,\mu_0)$ sufficiently large and $\epsilon>0$ small so that $\mathcal I_{\mu_1}$ generates $\mathcal I$, $\mathcal I_{\mu_1}=\mathcal I_{\mu_1+\epsilon}$ and $\dim \mathbb S_{i,\mu_1+\epsilon}=\dim \mathbb S_{\mu_1+\epsilon}=\dim \mathbb S_{\mu_1}$.

\begin{lemma}\label{l3-2}For $i$ large we have $\dim \mathcal I_{i,\mu_1+\epsilon}=\dim \mathcal I_{\mu_1+\epsilon}$.	
\end{lemma}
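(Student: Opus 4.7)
My approach reduces the lemma to a comparison of dimensions of graded pieces of the coordinate rings of $\mathcal C$ and $\mathcal C_i$, combining rank-nullity with Lemma~\ref{l:semicontinuity} and its upper-semicontinuity half. The first observation is that $\mathbb S_{\mu_1+\epsilon}$ and $\mathbb S_{i,\mu_1+\epsilon}$ are both spanned by monomials, since $\xi$ and $\xi_i$ are simultaneously diagonal with respect to the standard coordinates on $\C^N$. The hypothesis $\dim\mathbb S_{i,\mu_1+\epsilon}=\dim\mathbb S_{\mu_1+\epsilon}=\dim\mathbb S_{\mu_1}$ then forces them to coincide, for $i$ large, as a common subspace $V\subset\mathbb S$: no monomial has $\xi$-weight in the gap $(\mu_1,\mu_1+\epsilon]$, so after a sufficiently small perturbation the same monomials exhaust $\xi_i$-weighted degree $\leq\mu_1+\epsilon$. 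With this identification $\mathcal I_{i,\mu_1+\epsilon}=\mathcal I_i\cap V$ and $\mathcal I_{\mu_1+\epsilon}=\mathcal I\cap V$, so the lemma becomes an identity of codimensions inside a single $V$.

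The key step is then to compare the pullback maps $\pi:V\to R(\mathcal C),\ f\mapsto F^*f$, and $\pi_i:V\to R(\mathcal C_i),\ f\mapsto F_i^*f$, whose kernels are exactly $\mathcal I_{\mu_1+\epsilon}$ and $\mathcal I_{i,\mu_1+\epsilon}$. Since $R(\mathcal C)$ is generated by $R_{[0,\underline\mu)}(\mathcal C)\subset\pi(V)$ and $\underline\mu<\mu_1$, the image of $\pi$ coincides with $R_{[0,\mu_1+\epsilon]}(\mathcal C)$, while the $\xi_i$-equivariance of $F_i$ lands $\pi_i(V)$ inside $R_{[0,\mu_1+\epsilon]}(\mathcal C_i)$. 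Summing Lemma~\ref{l:semicontinuity} over the finitely many weights $\nu\in\mathcal S(\mathcal C)\cap[0,\mu_1+\epsilon]$ yields
\[\dim R_{[0,\mu_1+\epsilon]}(\mathcal C_i)=\dim R_{[0,\mu_1+\epsilon]}(\mathcal C)\]
for $i$ large, so rank-nullity gives $\dim\mathcal I_{i,\mu_1+\epsilon}\geq\dim\mathcal I_{\mu_1+\epsilon}$. The reverse inequality is pure upper semicontinuity: I take an $L^2$-orthonormal basis of $\mathcal I_{i,\mu_1+\epsilon}$ with respect to~\eqref{e:L22}, pass to a subsequential limit, and use the local Hausdorff convergence $W_i\to W$ to conclude that the limits remain orthonormal in $V$ and vanish on $W$, hence lie in $\mathcal I_{\mu_1+\epsilon}$.

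The only non-routine ingredient is the summed identity $\dim R_{[0,\mu_1+\epsilon]}(\mathcal C_i)=\dim R_{[0,\mu_1+\epsilon]}(\mathcal C)$: Lemma~\ref{l:semicontinuity} as stated only controls small neighborhoods of weights $\nu\in\mathcal S(\mathcal C)$, so I would need to separately rule out the appearance of spurious weights of $\mathcal C_i$ in the gaps between consecutive $\nu$'s. This is the main obstacle, but it is handled by rerunning the $L^2$-normalize-and-take-limits argument from the proof of Lemma~\ref{l:semicontinuity}: any stray weight of $\mathcal C_i$ would, upon passing to a limit, produce a nonzero homogeneous element of $R(\mathcal C)$ with a weight not in $\mathcal S(\mathcal C)$, which is impossible.
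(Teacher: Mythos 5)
Your argument is correct and is essentially the paper's own proof: one inequality comes from taking $L^2$-limits of orthonormal bases of $\mathcal I_{i,\mu_1+\epsilon}$, and the other from rank--nullity for the restriction maps $\mathbb S_{\mu_1+\epsilon}\to R_{[0,\mu_1+\epsilon)}(\mathcal C)$ (surjective by the choice of $\underline\mu$) and $\mathbb S_{i,\mu_1+\epsilon}\to R_{[0,\mu_1+\epsilon)}(\mathcal C_i)$, combined with semicontinuity of $\dim R_{[0,\mu_1+\epsilon)}$ and the assumption $\dim\mathbb S_{i,\mu_1+\epsilon}=\dim\mathbb S_{\mu_1+\epsilon}$. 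The one place you work harder than necessary is the ``summed identity'': for the rank--nullity step you only need $\dim R_{[0,\mu_1+\epsilon)}(\mathcal C_i)\leq \dim R_{[0,\mu_1+\epsilon)}(\mathcal C)$, which is the easy upper-semicontinuity direction (limits of an orthonormal basis of $R_{[0,\mu_1+\epsilon)}(\mathcal C_i)$ under the Gromov--Hausdorff convergence), so the spurious-weight discussion you flag as the main obstacle can be dispensed with entirely.
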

\begin{proof} To see this,  on the one hand, for each $i$ we can choose an $L^2$ orthonormal basis of $\mathcal I_{i,\mu_1+\epsilon}$ with respect to the inner product defined in \eqref{e:L22}. By passing to a subsequence we can take limits of the basis elements, which then implies that $\dim \mathcal I_{ \mu_1+\epsilon}\geq \dim \mathcal I_{i, \mu_1+\epsilon}$ for $i$ large. On the other hand, for each $i$ we choose an $L^2$ orthonormal basis of $R_{[0,\mu_1+\epsilon)}(\mathcal C_i)$. Then by passing to a subsequence we can take limits under the Gromov-Hausdorff convergence and obtain an $L^2$ orthonormal set of holomorphic functions on $\mathcal C$. This implies that for $i$ large $\dim R_{[0, \mu_1+\epsilon)}(\mathcal C)\geq \dim R_{[0,  \mu_1+\epsilon)}(\mathcal C_i)$. But we have $$\dim R_{[0, \mu_1+\epsilon)}(\mathcal C_i)\geq \dim \mathbb S_{i,\mu_1+\epsilon}-\dim \mathcal I_{i,\mu_1+\epsilon},$$ and  $$\dim R_{[0, \mu_1+\epsilon)}(\mathcal C)=\dim \mathbb  S_{\mu_1+\epsilon}-\dim \mathcal I_{ \mu_1+\epsilon}.$$ So we see the desired identity must hold. 
	
\end{proof}

As a consequence we can choose a basis $\{f_{i,a}\}$ of $\mathcal I_{i, \mu_1+\epsilon}$ such that  each $f_{i, a}$ is $\xi_i$-homogeneous and as $i\rightarrow\infty$, they converge to a basis $\{f_{ a}\}$ of $\mathcal I_{ \mu_1}$ such that each $f_{ a}$ is $\xi$-homogeneous. By the choice of $\mu_1$  we know the latter generates the ideal $\mathcal I$.

Denote by $\mathcal I_i'$ the ideal of $\mathbb S$ generated by $\mathcal I_{i, \mu_1+\epsilon}$. Let $W_i'$ be the affine scheme in $\C^N$ defined by $\mathcal I_i'$.  Since $W_i$ is $\xi_i$ invariant we see that $\mathcal I_{i+\epsilon}$ is spanned by $\xi_i$-homogeneous elements, so $\mathcal I_i'$ is $\xi_i$-homogeneous. In particular $W_i'$ is also $\xi_i$ invariant. A priori $W_i'$ could be strictly bigger than $W_i$ as a scheme, i.e., $\mathcal I_i'$ might be strictly contained in $\mathcal I_i$.

\begin{lemma}\label{l3-3}
For $i$ large $W_i'$ is  $\xi$ invariant and $W$ is $\xi_i$ invariant. 
	\end{lemma}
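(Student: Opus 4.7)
The plan is to argue monomial-by-monomial, exploiting the simultaneous diagonalization of $\xi$ and each $\xi_i$ with respect to the standard coordinates $z_1,\ldots,z_N$ on $\mathbb{C}^N$. In this setup every monomial $z^\alpha$ is at once $\xi$- and $\xi_i$-homogeneous, with $\deg_{\xi_i}(z^\alpha)\to\deg_\xi(z^\alpha)$ because $\xi_i\to\xi$. Expand $f_{i,a}=\sum_\alpha c_{i,a,\alpha}z^\alpha$ and $f_a=\sum_\alpha c_{a,\alpha}z^\alpha$; the convergence $f_{i,a}\to f_a$ yields coefficient-wise convergence $c_{i,a,\alpha}\to c_{a,\alpha}$, while the weighted-degree bound $\mu_1+\epsilon$ together with the positivity of the diagonal weights confines the monomials appearing in any $f_{i,a}$ or $f_a$ to a fixed finite set $M$ independent of $i$.

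For the $\xi$-invariance of $W_i'$, the strategy is to prove the stronger assertion that for $i$ large each generator $f_{i,a}$ is itself $\xi$-homogeneous of degree $\mu_a:=\deg_\xi(f_a)$. If $z^\alpha\in M$ has $\deg_\xi(z^\alpha)\neq \mu_a$, then because $\deg_{\xi_i}(z^\alpha)\to\deg_\xi(z^\alpha)$ while $\mu_{i,a}:=\deg_{\xi_i}(f_{i,a})\to\mu_a$ tends to a different limit, one obtains $\deg_{\xi_i}(z^\alpha)\neq\mu_{i,a}$ for all $i$ large, and then the $\xi_i$-homogeneity of $f_{i,a}$ forces $c_{i,a,\alpha}=0$. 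Since $M$ is finite, a single large $i$ works uniformly for all $z^\alpha\in M$, so every surviving monomial of $f_{i,a}$ has $\xi$-degree $\mu_a$. Thus each $f_{i,a}$ is $\xi$-homogeneous, the ideal $\mathcal I_i'$ they generate is $\xi$-homogeneous, and $W_i'$ is $\xi$-invariant.

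For the $\xi_i$-invariance of $W$ the argument runs symmetrically and is in fact easier. For each of the finitely many $\alpha$ with $c_{a,\alpha}\neq 0$, coefficient convergence gives $c_{i,a,\alpha}\neq 0$ for $i$ large, and the $\xi_i$-homogeneity of $f_{i,a}$ then pins down $\deg_{\xi_i}(z^\alpha)=\mu_{i,a}$ for such $i$. Choosing $i$ large enough to handle all such $\alpha$ at once, $f_a$ is $\xi_i$-homogeneous of degree $\mu_{i,a}$. By the choice of $\mu_1$ the set $\{f_a\}$ generates $\mathcal I$, so $\mathcal I$ is $\xi_i$-homogeneous and $W$ is $\xi_i$-invariant.

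The one point that needs care is the uniformity of ``$i$ large'' across the finite monomial set $M$; this is automatic from the weighted-degree bound and the convergence $\zeta_{i,k}\to\zeta_k$ of the diagonal weights of $\xi_i$. Beyond that, the proof is a matter of bookkeeping, the key conceptual observation being that simultaneous diagonalization reduces a homogeneity question for a polynomial to a question about which monomials appear with nonzero coefficient.
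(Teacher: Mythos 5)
Your proof is correct and follows essentially the same route as the paper's: both exploit that $\xi$ and the $\xi_i$ are simultaneously diagonalized, so every monomial is homogeneous for both, and then compare $\xi_i$-degrees and $\xi$-degrees of the finitely many monomials of weighted degree at most $\mu_1+\epsilon$ using $\xi_i\to\xi$ and the coefficient-wise convergence $f_{i,a}\to f_a$. The only cosmetic difference is that the paper phrases the $\xi$-invariance of $W_i'$ as a contradiction argument (extracting a single offending monomial $h$ along a subsequence) whereas you argue directly over the fixed finite monomial set; the content is identical.
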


\begin{proof}
Since each $f_{i, a}$ is $\xi_i$-homogeneous and $f_{i, a}$ converges to $f_{a}$ as $i\rightarrow\infty$,  $f_{ a}$ must also be $\xi_i$-homogeneous for $i$ large.  Thus $W$ is $\xi_i$ invariant.

To show $W_i$ is $\xi$ invariant for $i$ is large we argue by contradiction. Suppose not, without loss of generality we may assume that for some fixed $a$ and for all $i$ large, $f_{i, a}$ is not $\xi$-homogeneous.  Passing to a subsequence we may further assume that there is a fixed monomial $h\in \mathbb S_{\mu_1}$  which appears in the monomial expansion of $f_{i,a}$ for all $i$ and such that $\deg_{\xi}(h)\neq \deg_{\xi}(f_{a})$. Since $\xi_i\rightarrow\xi$, we see that for $i$ large, $\deg_{\xi_i}(h)\neq \deg_{\xi_i}(f_{ a})$. This contradicts the fact that $f_{i,a}$ is $\xi_i$-homogeneous. 
\end{proof}

Denote by $\mathbb T$ the compact torus in $GL(N;\mathbb C)$ generated by $\xi$. Denote by $\Hilb$ the multi-graded Hilbert scheme defined by the Hilbert function $h_{\mathcal C}$ with respect to the $\mathbb T$ action. 
Lemma \ref{l3-2} implies that for $i$ large, $h_{\mathcal I_i'}(d)=h_{\mathcal C}(d)$ for all $d\leq \mu_1$. Then by the result of Haiman-Sturmfels \cite{HS} (as reviewed in Section \ref{ss2-2}) we see $W_i'$ is also represented by a point $[W_i']$ in $\Hilb$. In particular, $h_{\mathcal I'}(d)=h_{\mathcal C}(d)$ for all $d\geq 0$. Furthermore, $[W_i']\rightarrow [\mathcal C]$ as $i\rightarrow\infty$.  Since the universal family over $\Hilb$ is flat and normality is an open condition  it follows that for $i$ large $W_i'$ is normal. In particular it is reduced and irreducible, but we have $\dim W_i=\dim W_i'$, so we conclude that $W_i'=W_i$. It follows that $F_i$ must be an isomorphism.

\begin{proposition}\label{p:volume locally constant}
	For $i$ large we have $\xi_i=\xi$ and $\Vol(\mathcal C_i)=\Vol(\mathcal C)$.
\end{proposition}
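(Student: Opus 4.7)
The plan is to combine the volume minimization principle of Martelli--Sparks--Yau with the constancy of Hilbert functions in the Haiman--Sturmfels Hilbert scheme $\Hilb$ to deduce $\xi_i=\xi$ and $\Vol(\mathcal C_i)=\Vol(\mathcal C)$ simultaneously. The strategy is to show that, after identifying suitable tori, the volume functions attached to $\mathcal C$ and $\mathcal C_i$ agree on a common open subset of a common Lie algebra, whereupon the uniqueness of the critical point of each volume function forces the two Reeb vector fields to coincide.

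First I set up a common framework. Let $\mathbb T_i'$ denote the compact torus in $\GL(N;\C)$ generated by $\xi$ and $\xi_i$, which commute because they are both diagonal in the chosen coordinates. By Lemma~\ref{l3-3}, for $i$ large both $\mathcal C$ and $\mathcal C_i$ are $\mathbb T_i'$-invariant subschemes of $\C^N$, so $\xi_i\in\Lie(\Aut(\mathcal C))$ and $\xi\in\Lie(\Aut(\mathcal C_i))$. Since $\xi_i\to\xi$ and $\xi_i$ has positive weights on $\mathcal C_i$, continuity places $\xi$ in the Reeb cone of $\mathcal C_i$, and symmetrically $\xi_i$ in the Reeb cone of $\mathcal C$. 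An auxiliary check using the continuity of the relative canonical class in the flat family over $\Hilb$ shows that the canonical normalization hyperplanes $\mathbb H$ of $\mathcal C$ and $\mathbb H_i$ of $\mathcal C_i$ can be arranged to coincide for $i$ large, so the two volume functions are defined on the same hyperplane in $\Lie(\mathbb T_i')$.

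The key step is to match the $\xi_i$-weighted Hilbert functions of $R(\mathcal C)$ and $R(\mathcal C_i)$. For each $\mathbb T$-weight $\beta$ both $\mathcal I\cap \mathbb S_\beta$ and $\mathcal I_i\cap \mathbb S_\beta$ are $\xi_i$-invariant subspaces of $\mathbb S_\beta$ of the same dimension $h_{\mathcal C}(\beta)$, and the universal flat family over $\Hilb$ gives the convergence $\mathcal I_i\cap \mathbb S_\beta\to \mathcal I\cap \mathbb S_\beta$ in $\mathrm{Gr}(h_{\mathcal C}(\beta),\mathbb S_\beta)$. Inside this Grassmannian, the locus of $\xi_i$-invariant subspaces of a fixed dimension is a disjoint union of closed subvarieties indexed by $\xi_i$-character, each being a product of Grassmannians on the individual $\xi_i$-weight subspaces. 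Two such components are separated by a universal distance at least $1$ in the Grassmannian, because the $\xi_i$-weight subspaces of $\mathbb S_\beta$ are pairwise orthogonal under the weighted $L^2$ inner product \eqref{e:L22} (monomials being orthogonal, and each $\xi_i$-weight space being spanned by monomials). Hence $\mathcal I_i\cap \mathbb S_\beta$ and $\mathcal I\cap \mathbb S_\beta$ lie in the same $\xi_i$-character component for $i$ large, giving
\[\dim (\mathcal I_i\cap \mathbb S_\beta)_\mu^{\xi_i}=\dim (\mathcal I\cap \mathbb S_\beta)_\mu^{\xi_i}\]
for every $\xi_i$-weight $\mu$. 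Summing over $\beta$ yields $\mathbb V_{\mathcal C_i}(\xi_i)=\mathbb V_{\mathcal C}(\xi_i)$, while the analogous identity $\mathbb V_{\mathcal C_i}(\xi)=\mathbb V_{\mathcal C}(\xi)$ is immediate from the matching $\mathbb T$-Hilbert functions.

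Volume minimization then closes the argument: applying the principle separately to $\mathcal C$ and $\mathcal C_i$ produces the chain
\[\mathbb V_{\mathcal C}(\xi)\leq \mathbb V_{\mathcal C}(\xi_i)=\mathbb V_{\mathcal C_i}(\xi_i)\leq \mathbb V_{\mathcal C_i}(\xi)=\mathbb V_{\mathcal C}(\xi),\]
forcing every term to be equal. The strict convexity of $\mathbb V_{\mathcal C}$ on $\Lie(\mathbb T_i')^+\cap \mathbb H$ and the uniqueness of its critical point then force $\xi_i=\xi$, whence $\Vol(\mathcal C_i)=\mathbb V_{\mathcal C_i}(\xi_i)=\mathbb V_{\mathcal C}(\xi)=\Vol(\mathcal C)$. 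The most delicate ingredient is the separation estimate in the previous paragraph: it hinges on the fact that the weighted $L^2$ structure is invariant under the full diagonal torus $\mathbb T_N\supset \mathbb T_i'$, so the orthogonality of distinct $\xi_i$-weight subspaces of $\mathbb S_\beta$, and hence the universal separation of $\xi_i$-character components in $\mathrm{Gr}(h_{\mathcal C}(\beta),\mathbb S_\beta)$, hold uniformly in $i$; a subsidiary technical check is the continuity of the canonical normalization referenced in the setup step.
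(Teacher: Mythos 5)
Your overall strategy coincides with the paper's: put $\mathcal C$ and $\mathcal C_i$ into a common multi-graded Hilbert scheme, show the volume functions agree and the Reeb fields lie on a common normalization hyperplane, and conclude from strict convexity and uniqueness of the critical point. Your Grassmannian separation argument for matching the $\xi_i$-refined Hilbert functions is a genuine (and welcome) elaboration of a point the paper passes over quickly when it asserts $\mathbb V_i=\mathbb V$; the orthogonality of distinct $\xi_i$-weight subspaces of $\mathbb S_\beta$ under \eqref{e:L22} does give the uniform separation you need, so that part is sound.

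The genuine gap is the step you dismiss as ``an auxiliary check using the continuity of the relative canonical class'': the identification $\mathbb H=\mathbb H_i$ of the normalization hyperplanes. This is the technical heart of the proposition, and continuity is the wrong mechanism. The conclusion $\xi_i=\xi$ is an \emph{exact} rigidity statement; a continuity argument for the hyperplanes would only yield $\mathbb H_i\to\mathbb H$, and feeding approximate hyperplanes into the convexity argument recovers only $\xi_i\to\xi$, which is already known from $d_{EGH}$-convergence and gives nothing. What is actually required (and what the paper supplies) is: (i) openness of rational singularities to ensure the $W_i$ are eventually of the right type; (ii) the result of de Fernex--Ein--Musta\c{t}\u{a} applied to the flat family over the Zariski closure of $\{[W_i]\}$ in $\Hilb$, producing a \emph{uniform} index $l$ and a holomorphic section $\Omega$ of $K_{\mathcal W/\mathcal O}^{l}$ near the section of cone vertices; and (iii) after the $\mathbb T$-equivariant normalization, the observation that $c_t(\xi')$ is a real-valued function depending holomorphically on $t$ over a connected base, hence constant in $t$ --- so $c_i=c$ as linear functionals exactly, not just approximately. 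Without the uniform $l$ and the holomorphic-plus-real rigidity of the eigenvalue, the hyperplanes need not coincide and the chain $\mathbb V_{\mathcal C}(\xi)\leq\mathbb V_{\mathcal C}(\xi_i)=\mathbb V_{\mathcal C_i}(\xi_i)\leq\mathbb V_{\mathcal C_i}(\xi)$ is not even well posed, since the volume minimization principle compares Reeb fields only within the fixed normalized slice $Lie(\mathbb T')^+\cap\mathbb H$. You should promote this ``subsidiary check'' to the main step of the proof and carry it out.
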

\begin{proof}
 It suffices to show that given any subsequence of $\{i\}$, by passing to a further subsequence for $i$ large it holds that $\xi_{i}=\xi$ and $\Vol(\mathcal C_i)=\Vol(\mathcal C)$. Without loss of generality we may assume the subsequence is the original sequence. First by the openness of rational singularities \cite{Elk} we may assume $W_i$ has rational singularities for $i\geq i_0$. 
We take the set of points $P=\{[W_i]|i\geq i_0\}\in \Hilb$ and take its Zariski closure $\mathcal   Z$. Denote by $\pi:\mathcal W\rightarrow \mathcal Z$ the restriction of the flat universal family. Since each affine cone by definition contains the origin, we have an obvious section $\sigma$ of $\pi$.  By \cite{FEM} we can find an integer $l>0$ and a Zariski open $\mathcal O\subset \mathcal Z$ such that for any $t\in \mathcal O$, the corresponding affine cone $\mathcal W_t=\pi^{-1}(t)$ is normal with rational singularities and $K_{\mathcal W_t}^{l}$ is trivial. Moreover, we may take  a holomorphic section $\Omega$ of $K_{\mathcal W/\mathcal O}^{l}$ over a neighborhood of $\sigma$. Using the $\mathbb T$ action we can as in Section \ref{ss2-2} modify $\Omega_t$ simultaneously for all $t\in \mathcal O$  so that  $\mathcal L_{\xi}\Omega_t=\sqrt{-1}c_t\Omega_t$, where $c_t$ is a real-valued function which depends holomorphically on $t$. It follows that $c_t$ must be a constant in $t$. So we have $c_t=c_0=nl$. 

This implies that passing to a subsequence for $i$ large we know both $\xi$ and $\xi_i$ belong to the same hyperplane $ \mathbb H=\mathbb H_i\subset Lie(\mathbb T)$ (as defined in \eqref{e:hyperplane}). On the other hand  by the  properties of the volume function listed in Section \ref{ss2-2} we see  $\mathbb V_i=\mathbb V$. Since $\mathbb V$ is strictly convex and both $\xi$ and $\xi_i$ are critical points of $\mathbb V|_{\mathbb H}$, we conclude  that $\xi=\xi_i$. Consequently, we also have $\Vol(\mathcal C_i)=\Vol(\mathcal C)$.
\end{proof}

By a simple contradiction argument the above discussion also gives
\begin{corollary}\label{c:locally constant}
	There exists $\underline\epsilon>0$ depending only on $n$ and $D$ such that for a given cone limit $\mathcal C\in \mathfrak C$, all the cone limits $\mathcal C'\in \mathfrak C$ with $d_{EGH}(B_{\mathcal C}, B_{\mathcal C'})<\underline\epsilon$ can be parametrized by  a connected component in a multi-graded Hilbert scheme $\Hilb$ of $\C^N$ such that the Reeb vector field of $\mathcal C'$ is given by the restriction of a common diagonal linear vector field $\xi$ on $\C^N$.  
\end{corollary}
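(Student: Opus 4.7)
The plan is to derive the corollary from Proposition \ref{p:volume locally constant} by a compactness/contradiction argument, leveraging the precompactness of the class $\mathfrak C$ of cone limits in the enhanced Gromov-Hausdorff topology. More precisely, because every cone limit arises from some rescaled subsequence of a polarized K\"ahler-Einstein sequence with $\diam \le D$, Gromov's precompactness together with the Cheeger-Colding regularity and the complex-analytic convergence results recalled in Section \ref{ss2-1} imply that any sequence of cone limits associated to sequences $(X_j^{(k)}, L_j^{(k)}, \omega_j^{(k)}, p_j^{(k)})$ of such K\"ahler-Einstein data admits a subsequence converging (in the enhanced sense) to a cone limit.

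First I would suppose, for contradiction, that no such uniform $\underline\epsilon$ exists. Then there exist polarized K\"ahler-Einstein sequences and associated cone limits $\mathcal C_k, \mathcal C_k' \in \mathfrak C$ with $d_{EGH}(B_{\mathcal C_k}, B_{\mathcal C_k'}) \to 0$, yet for every $k$ the conclusion fails: either $\mathcal C_k'$ cannot be realized, together with $\mathcal C_k$, inside a common multi-graded Hilbert scheme of some $\C^N$ lying on one connected component, or their Reeb vector fields $\xi_k, \xi_k'$ cannot be simultaneously identified with the restriction of one diagonal linear vector field $\xi$ on $\C^N$.

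Second, by the precompactness noted above I extract (via a diagonal argument) a subsequence for which $\mathcal C_k$ converges to some $\mathcal C_\infty \in \mathfrak C$; the hypothesis $d_{EGH}(B_{\mathcal C_k},B_{\mathcal C_k'}) \to 0$ then forces $\mathcal C_k'\to \mathcal C_\infty$ as well. Now I apply the construction preceding Proposition \ref{p:volume locally constant} to $\mathcal C_\infty$: choose $\underline\mu \notin \mathcal S(\mathcal C_\infty)$ large enough that $R_{[0,\underline\mu)}(\mathcal C_\infty)$ generates $R(\mathcal C_\infty)$, fix $\mu_1 > \max(\underline\mu, \mu_0(\mathcal C_\infty))$, and use $L^2$-orthonormal bases of $R_{[0,\underline\mu+\epsilon)}$ to embed $\mathcal C_k$ and $\mathcal C_k'$ into a common $\C^N$ simultaneously. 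Lemmas \ref{l3-2}-\ref{l3-3} and the Haiman-Sturmfels boundedness result place $[\mathcal C_k]$ and $[\mathcal C_k']$ in the multi-graded Hilbert scheme $\Hilb$ associated to $h_{\mathcal C_\infty}$, and both sequences of points converge to $[\mathcal C_\infty]$, hence lie on its connected component for $k$ large. Proposition \ref{p:volume locally constant} applied to each of the two converging sequences then yields $\xi_k = \xi_k' = \xi_\infty$ in $\mathfrak{gl}(N;\C)$, diagonal in the fixed coordinates of $\C^N$, contradicting the failure assumption.

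The main obstacle I expect is verifying the uniformity: the statement quantifies over \emph{all} cone limits arising from \emph{any} polarized K\"ahler-Einstein sequence satisfying the diameter bound, so the contradiction must be set up with a double-indexed family and the extraction of a common limiting cone requires combining Gromov's compactness with the enhanced convergence (convergence of complex structures) from Section \ref{ss2-1}; this relies on the fact that all relevant geometric bounds (Ricci lower bound, non-collapsing constant, diameter) are controlled solely in terms of $n$ and $D$. Once this uniformity is in place, the remainder is a direct application of the analysis already carried out in the preceding discussion.
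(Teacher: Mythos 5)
Your proposal is correct and follows essentially the same route as the paper, which derives the corollary from the discussion preceding Proposition \ref{p:volume locally constant} precisely by the contradiction-plus-compactness argument you spell out (negate the uniform $\underline\epsilon$, extract a convergent subsequence of cone limits using the non-collapsing and diameter bounds depending only on $n$ and $D$, and apply the common Hilbert scheme embedding and the identification of Reeb fields to the sequences converging to the limit cone). The paper leaves this as "a simple contradiction argument," and your write-up fills in exactly the intended details, including the key point that the uniformity comes from the precompactness of $\mathfrak C$ in the enhanced Gromov--Hausdorff topology.
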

\begin{proof}[Proof of Theorem \ref{t:volume finite}]
Suppose otherwise, then we can find a sequence $Z_i\in \mathfrak R$ such that $\Vol_\infty(Z_i)$ are all distinct. Let $\mathcal C_i$ be the unique asymptotic cone of $Z_i$. Then $\Vol(\mathcal C_i)=\Vol_\infty(Z_i)$. Passing to a subsequence we may assume $\mathcal C_i$ converges to a limit $\mathcal C$. Then we get a contradiction with Proposition \ref{p:volume locally constant}. 

From the argument we can also see that the bound only depends on $D$, which guarantees the non-collapsing property and  Gromov-Hausdorff compactness of the relevant cones.  
	
\end{proof}

\begin{remark}
The above arguments work as long as \cite{DS2} can be applied so may be useful in more general setting. For example, Theorem \ref{t:volume finite}  holds for general Ricci-flat K\"ahler cones with isolated singularities or with orbifold singularities along rays, without assuming that they arise as bubble limits. We leave this direction for interested readers to explore further. 
\end{remark}

\section{Existence of minimal bubbles}

We  adopt the notation in the introduction. Let $(X_j, L_j, \omega_j, p_j)$ be a sequence of polarized K\"ahler-Einstein manifolds converging to $(X_\infty, \omega_\infty, p_\infty)$.   By \cite{DS1} the convergence can be understood algebro-geometrically in the flat universal family over a fixed Hilbert scheme of $\mathbb P^M$. As a consequence there is a neighborhood $U$ of $p_\infty$ in $\mathbb P^M$ such that $X_\infty\cap U$ is cut out by finitely many holomorphic functions $\{h^a\}_{a=1}^m$ on $U$ and for $j$ large $X_j\cap U$ is cut out by $\{h_j^a\}_{a=1}^m$. Moreover, for each $a$, $h_j^a$ converges uniformly to $h^a$ over $U$ as $j\rightarrow\infty$.

Let $\mathcal C_0(X_\infty)$ be the unique tangent cone at $p_\infty$, and denote $V\equiv\Vol_0(X_\infty)$, $\mathcal S\equiv \mathcal S(\mathcal C_0(X_\infty))$. Let $\mu_1=\mu_1(\mathcal C_0(X_\infty))$ be given in Lemma \ref{l3-2} and fix $\epsilon>0$ such that $$[V-\epsilon, V+\epsilon]\cap \mathcal V(\mathfrak R)=\{V\}.$$

Choose $\mu_2\geq\mu_1$ such that the graded ring $R(p_\infty)$ is generated by functions in degrees at most $\mu_2$. For $\mu_*\geq \mu_2$, we choose a finite set $\{f^\alpha\}_{\alpha=0}^N\subset \mathcal O(p_\infty)$ which descends to a basis of $R_{[0, \mu_*]}(p_\infty)$ and such that $\deg(f^\alpha)\leq \deg(f^\beta)$ whenever $\alpha\leq\beta$. Notice $f^0$ is a constant.  Choose an extension of $f^\alpha$ to a holomorphic function $\hat f^\alpha$ on a neighborhood of $p_\infty$ in $\mathbb P^M$. We may fix $\mu_*$ large so that $\{\hat f^\alpha\}_{\alpha\geq 1}$ define a holomorphic embedding of a neighborhood $U\subset \mathbb P^M$.  For $j$ large we denote by $f^\alpha_j$ the restriction of $\hat f^\alpha$ to $U\cap X_j$. Clearly, as $j\rightarrow\infty$, $f^\alpha_j$ converges naturally to $f^\alpha$ under the convergence $X_j\rightarrow X_\infty$. 

 Denote $$\delta=\inf\{|\mu-\nu||\mu, \nu\in \mathcal S\cap [0, \mu_*], \mu\neq\nu\}$$ 
 For $\lambda>0$ denote by $X_{j, \lambda}$ the rescaled space $(X_j, p_j, \lambda^2\omega_j)$ and by $B_{j, \lambda}$ the unit ball in $X_{j, \lambda}$ centered at $p_j$. We denote $$\underline{\Vol}(B_{j, \lambda})=\frac{\Vol(B_{j, \lambda})}{\Vol(B_0(1))},$$
 where $B_0(r)$ denotes the radius $r$ ball in the Euclidean space $\C^n$.

From the Bishop-Gromov monotonicity and Colding's volume convergence theorem \cite{Colding} we see that there exists a $\underline\lambda>0$ such that $\underline{\Vol}(B_{j, \underline\lambda})\geq V-\epsilon/4$ for all $j$ large. Denote by $\overline\lambda_j$ the largest $\lambda$ such that $\underline{\Vol}(B_{j,\lambda}) -V\leq\epsilon/4$.  Notice $\overline\lambda_j\rightarrow\infty$ as $j\rightarrow\infty$.  We may assume $\underline\lambda$ is large so that $B_{j, \underline\lambda}\subset U$ for all $j$ large.

\begin{lemma}\label{lem1.9}
	Given a sequence $\lambda_j\in[\underline\lambda, \overline\lambda_j]$ with $\lambda_j\rightarrow\infty$ and $\lambda_j^{-1}\overline\lambda_j\rightarrow\infty$, any subsequential limit of $X_{j, \lambda_j}$ is a cone limit. \end{lemma}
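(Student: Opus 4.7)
The strategy is to show that any subsequential Gromov--Hausdorff limit $(Z, p_Z)$ of the rescaled spaces $X_{j,\lambda_j}$ satisfies $\Vol_0(Z)=\Vol_\infty(Z)=V$, and then invoke the Bishop--Gromov rigidity noted in the introduction: the equality $\Vol_0(Z)=\Vol_\infty(Z)$ forces $Z$ to be a metric cone with vertex $p_Z$, which by definition makes $Z$ a cone limit.

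The main estimate is to control the volume ratios at every scale. Bishop--Gromov monotonicity, in the form that $\underline{\Vol}(B_{j,\lambda})$ is non-decreasing in $\lambda$, together with the definitions of $\underline\lambda$ and $\overline\lambda_j$, gives $\underline{\Vol}(B_{j,\lambda})\in[V-\epsilon/4,\,V+\epsilon/4]$ for every $\lambda\in[\underline\lambda,\overline\lambda_j]$. A direct scaling identity shows that the volume ratio of the ball $B(p_j,r)$ measured in $X_{j,\lambda_j}$ equals $\underline{\Vol}(B_{j,\lambda_j/r})$ in the original normalization. The hypotheses $\lambda_j\to\infty$ and $\lambda_j^{-1}\overline\lambda_j\to\infty$ precisely guarantee that for any fixed $r>0$, $\lambda_j/r$ eventually lies in $[\underline\lambda,\overline\lambda_j]$, so this volume ratio inherits the same bound for $j$ sufficiently large.

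Passing to the Gromov--Hausdorff limit via Colding's volume convergence theorem then yields $\underline{\Vol}(B(p_Z,r))\in[V-\epsilon/4,\,V+\epsilon/4]$ for every $r>0$. Letting $r\to 0$ bounds $\Vol_0(Z)\leq V+\epsilon/4$; combined with the chain $\Vol_0(Z)\geq\Vol_\infty(Z)\geq\Vol_0(X_\infty)=V$, this places $\Vol_0(Z)$ in $\mathcal V(\mathfrak R)\cap[V,V+\epsilon/4]$. By Theorem \ref{t:volume finite} and the choice of $\epsilon$ (which ensured $[V-\epsilon,V+\epsilon]\cap\mathcal V(\mathfrak R)=\{V\}$), one must have $\Vol_0(Z)=V$, hence also $\Vol_\infty(Z)=V$, and the rigidity statement finishes the argument.

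There is essentially no conceptual obstacle once Theorem \ref{t:volume finite} is in hand: the entire argument is a bookkeeping exercise in scales combined with the discreteness of the volume spectrum. The role of the two-sided asymptotic hypothesis on $\lambda_j$ is symmetric---$\lambda_j\to\infty$ is used to access arbitrarily large radii in $Z$ (lower bound on $\lambda_j/r$), while $\lambda_j^{-1}\overline\lambda_j\to\infty$ is used to access arbitrarily small radii (upper bound on $\lambda_j/r$). The lemma should thus be viewed as the first direct corollary of the finiteness of volumes established in Section 3.
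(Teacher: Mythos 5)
Your proof is correct and follows essentially the same route as the paper: the paper's own argument observes that the hypotheses force both $\mathcal C_0(Z)$ and $\mathcal C_\infty(Z)$ to have volume $\epsilon$-close to $V$, invokes the choice of $\epsilon$ (hence Theorem \ref{t:volume finite}) to conclude both volumes equal $V$, and then applies the Bishop--Gromov rigidity to see $Z$ is a cone. Your write-up just makes explicit the scaling bookkeeping ($\underline{\Vol}$ of the radius-$r$ ball in $X_{j,\lambda_j}$ equals $\underline{\Vol}(B_{j,\lambda_j/r})$) that the paper leaves implicit.
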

\begin{proof}
Passing to a subsequence we may take a pointed Gromov-Hausdorff limit $Z$. The assumptions imply that the asymptotic cone $\mathcal  C_\infty(Z)$ and the tangent cone $\mathcal C_0(Z)$ at $p_Z$ both have volume $\epsilon$-close to $V$. It follows from our choice of $\epsilon$ that $\Vol(\mathcal  C_\infty(Z))=\Vol(\mathcal C_0(Z))=V$. Hence $Z$ itself is a cone limit. 
\end{proof}

 Denote 
by $\mathfrak C'$ the class of all cone limits arising from the above lemma. Notice we have $\mathcal C_0(X_\infty)\in \mathfrak C'$.  It is not difficult to see that $\mathfrak C'$ is connected.

\begin{lemma}
All the cone limits in $\mathfrak C'$ can be parametrized by a fixed multi-graded Hilbert scheme $\Hilb$ of $\C^N$ such that the Reeb vector fields are given by the restriction of a common diagonal linear vector field $\xi$ on $\C^N$.	 In particular, all the cone limits in $\mathfrak C'$ have the same holomorphic spectrum  which coincides with $\mathcal S$ and have volume density $V$. 
\end{lemma}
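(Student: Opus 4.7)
My plan is to produce, for every $\mathcal C\in \mathfrak C'$, a $\xi$-equivariant algebraic embedding into a common $\mathbb C^N$, where $\xi$ is the diagonal linear vector field on $\mathbb C^N$ with weight $d_\alpha=\deg(f^\alpha)\in \mathcal S$ on the $\alpha$-th coordinate. Fix such $\mathcal C$ and write $\mathcal C=\lim_j X_{j,\lambda_j}$ along a subsequence with $\lambda_j\to\infty$ and $\lambda_j/\overline\lambda_j\to 0$. Consider the rescaled maps
\begin{equation*}
\Phi_j:B_{j,\lambda_j}\longrightarrow \mathbb C^N,\qquad x\mapsto \bigl(\lambda_j^{-d_\alpha}(f^\alpha_j(x)-f^\alpha_j(p_j))\bigr)_{\alpha=1}^N.
\end{equation*}
Because each $f^\alpha$ is a lift of a $d_\alpha$-homogeneous generator of $R(p_\infty)=R(\mathcal C_0(X_\infty))$, the uniform $C^0$-convergence $f^\alpha_j\to f^\alpha$ on $U$ combined with the tangent cone behavior of $f^\alpha$ on $X_\infty$ yields locally uniform bounds for $\Phi_j$ on fixed-radius metric balls in $X_{j,\lambda_j}$; passing to a further subsequence, $\Phi_j$ converges naturally to a holomorphic map $\Phi:\mathcal C\to \mathbb C^N$.

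Following the template of Section 3, I would next show that $\Phi$ is generically one-to-one onto a reduced affine subvariety $W\subset \mathbb C^N$. Any defining relation of $W$ arises as a limit of $\xi_j$-homogeneous relations for $\Phi_j(X_j\cap U)$ that are themselves built from the $h^a_j$'s (pulled back under the embedding $\{\hat f^\alpha\}$); since $\xi_j\to\xi$ and a fixed monomial on $\mathbb C^N$ carries a fixed $\xi$-weight, the limiting equations are automatically $\xi$-homogeneous, so $W$ is $\xi$-invariant. By the choice of $\mu_*>\mu_1$ and the Haiman-Sturmfels boundedness recalled in Section \ref{ss2-2}, $W$ represents a point in the multi-graded Hilbert scheme $\Hilb$ associated to the Hilbert function $h_{\mathcal C_0(X_\infty)}$ and the $\mathbb T$-action generated by $\xi$. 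Normality of $W$ and the identification $W=\Phi(\mathcal C)$ then follow exactly as in the proof of Proposition \ref{p:volume locally constant}.

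It remains to match the Reeb vector field of $\mathcal C$ with $\xi$. Under $\Phi$, the Reeb vector field of $\mathcal C$ lies in $\Lie(\mathbb T)^+\cap \mathbb H$ and is a critical point of the Martelli-Sparks-Yau volume functional $\mathbb V$ associated to the fixed Hilbert function. On the other hand, $\xi$ is also such a critical point (via $\mathcal C_0(X_\infty)$). Lemma \ref{lem1.9} and our choice of $\epsilon$ force $\Vol(\mathcal C)=V$, so by strict convexity of $\mathbb V|_{\mathbb H}$ recalled in Section \ref{ss2-2} these two critical points coincide. This identifies the Reeb vector field of $\mathcal C$ with $\xi$, pins down $\mathcal S(\mathcal C)=\mathcal S$, and confirms that all $\mathcal C\in \mathfrak C'$ are parametrized by the single component of $\Hilb$ containing $[\mathcal C_0(X_\infty)]$.

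The main obstacle is the uniform control of the rescaled data in Step 2: verifying that $\lambda_j^{-d_\alpha}(f^\alpha_j-f^\alpha_j(p_j))$ neither blows up from amplified lower-order contributions nor collapses into a degenerate limit, uniformly for $j\to\infty$ and $\lambda_j$ sliding all the way from the tangent-cone regime up to just below $\overline\lambda_j$. For $\lambda_j$ well below $\overline\lambda_j$ this is essentially the two-step degeneration on $X_\infty$ of \cite{DS2}, but near $\overline\lambda_j$ a bridging argument is needed. The natural way to supply it is to combine the dimension semicontinuity of Lemma \ref{l:semicontinuity} along the continuous family $\lambda\mapsto X_{j,\lambda}$ (whose sub-limits are all in $\mathfrak C'$ by Lemma \ref{lem1.9}) with a connectedness argument in $\Hilb$ patterned on Proposition \ref{p:volume locally constant}, using the boundedness constant $\mu_1$ to upgrade degree-$\leq\mu_1$ agreement to full Hilbert-function agreement.
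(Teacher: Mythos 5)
Your primary construction has a genuine gap at exactly the point you flag at the end, and it is not a technicality that the surrounding lemmas repair: the normalization $\lambda_j^{-d_\alpha}\bigl(f^\alpha_j-f^\alpha_j(p_j)\bigr)$ is not known to produce a non-degenerate limit once $\lambda_j$ leaves the tangent-cone regime of $X_\infty$. The three-circle inequality available at this stage (the analogue of Lemma \ref{lem1.10}) only gives $\|f^\alpha_j\|_{B_{j,2^{k+1}}}\geq 2^{-\deg(f^\alpha)-\delta/2}\|f^\alpha_j\|_{B_{j,2^k}}$, so after $O(\log_2\lambda_j)$ doublings the naive rescaling can still decay by an uncontrolled factor $\lambda_j^{-\delta/2}$; some coordinates of $\Phi_j$ may converge to $0$ and the limit map need not be an embedding. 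In the paper this is precisely why the proof of Theorem \ref{t:isolation} works with the $L^2$-normalized functions $\widetilde f^\alpha_{j,k}=f^\alpha_j/\|f^\alpha_j\|_{B_{j,2^k}}$ together with the projection/convexity machinery of Lemma \ref{l:convexity2}, and crucially that machinery is developed \emph{after} the present lemma and uses its conclusion (that the asymptotic cone of any limit lies in $\Hilb$ with Hilbert function $h_{\mathcal C_0(X_\infty)}$) to upgrade $\deg(f^\alpha_\infty)\leq\deg(f^\alpha)$ to equality. Relatedly, your identification of the Reeb field via strict convexity of $\mathbb V|_{\mathbb H}$ presupposes that $\mathcal C$ and $\mathcal C_0(X_\infty)$ already share the same multigraded Hilbert function, torus, and hyperplane $\mathbb H$ --- but that is the content of the lemma being proved, so as written the argument is circular for a cone limit $\mathcal C$ that is not a priori Gromov--Hausdorff close to $\mathcal C_0(X_\infty)$.

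The ``bridging argument'' you sketch in your last paragraph is in fact the paper's entire proof, not a supplement to the embedding construction. The paper argues by contradiction: if $\mathcal C'$ and $\mathcal C''$ arising from scales $\lambda_j'<\lambda_j''$ lay in different multi-graded Hilbert schemes, one approximates every intermediate ball $B_{j,\lambda}$, $\lambda\in[\lambda_j',\lambda_j'']$, within $\underline\epsilon/10$ by some cone limit $\mathcal C_{j,\lambda}\in\mathfrak C'$ (possible by Lemma \ref{lem1.9} and a contradiction argument), and then chains Corollary \ref{c:locally constant} --- the $\underline\epsilon$-local rigidity extracted from Proposition \ref{p:volume locally constant} --- across a finite cover of the connected interval $[\lambda_j',\lambda_j'']$ to force all these cones into one component of one $\Hilb$ with one Reeb field $\xi$. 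No direct embedding of an arbitrary $\mathcal C\in\mathfrak C'$ via the $f^\alpha_j$ is needed. I would therefore discard the first two steps of your proposal and promote the final paragraph to the proof itself, making the chaining over the interval of scales explicit; the ``in particular'' clause then follows because $\mathcal C_0(X_\infty)\in\mathfrak C'$ fixes the Hilbert function, hence the spectrum $\mathcal S$ and the volume density $V$, for every member of the component.
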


\begin{proof}
We suppose otherwise, then by Lemma \ref{lem1.9} after passing to a subsequence we may find scales $\underline\lambda<\lambda_j'<\lambda_j''<\overline\lambda_j$ such that $X_{j, \lambda_j'}\rightarrow \mathcal C'$ and $X_{j, \lambda_j''}\rightarrow\mathcal C''$ but $\mathcal C'$ and $\mathcal C''$ do not belong to the same multi-graded Hilbert scheme.  Now by a simple contradiction argument we can see that for all $j$ large and  all $\lambda\in [\lambda_j', \lambda_j'']$  there is a $\mathcal C_{j, \lambda}\in \mathfrak C'$ with  $d_{EGH}(B_{j, \lambda},B_{\mathcal C_{j, \lambda}})\leq \underline\epsilon/10$ such that $C_{j, \lambda_j'}=\mathcal C'$ and $C_{j, \lambda_j''}=\mathcal C''$, where $\underline\epsilon$ is given in Corollary \ref{c:locally constant}. Then it follows easily that for all  $\lambda\in [\lambda_j', \lambda_j'']$ we can find $\delta_{\lambda}>0$ such that $d_{EGH}(B_{\mathcal C_{j, \beta}}, B_{\mathcal C_{j, \lambda}})<\underline\epsilon$ for all $\beta\in [\lambda-\delta_{ \lambda}, \lambda+\delta_{ \lambda}]$,  So by Corollary \ref{c:locally constant} we see that for all $\beta\in[\lambda-\delta_{ \lambda}, \lambda+\delta_{ \lambda}]$, $\mathcal C_{j, \beta}$ is parametrized in a fixed multi-graded Hilbert scheme of $\C^N$ with the same Reeb vector field. Since $[\lambda_j', \lambda_j'']$ can be covered by finitely many such intervals, we obtain a contradiction. 
\end{proof}

The following is the main result of this section.
\begin{theorem}\label{t:isolation}
 $\mathcal C_0(X_\infty)$ is an isolated point in $\mathfrak C'$. 
\end{theorem}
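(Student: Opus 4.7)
The plan is to proceed by contradiction, extending the uniqueness of tangent cones argument in \cite{DS2}. Suppose there is a sequence $\mathcal C_i \in \mathfrak C'$ with $\mathcal C_i \not\cong \mathcal C_0(X_\infty)$ and $\mathcal C_i \to \mathcal C_0(X_\infty)$. Each $\mathcal C_i$ is realized as a Gromov-Hausdorff limit $X_{j_i,\lambda_i}\to\mathcal C_i$ with $\lambda_i\to\infty$ and $\lambda_i^{-1}\overline\lambda_{j_i}\to\infty$. By the results of Section 3, for $i$ large all the points $[\mathcal C_i]$ and $[\mathcal C_0(X_\infty)]$ lie in one connected component of a fixed multi-graded Hilbert scheme $\Hilb$ of $\C^N$ and share the common Reeb vector field $\xi$. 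In particular they all have the same holomorphic spectrum $\mathcal S$.

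First, I would realize this convergence algebraically using the coordinates $\{\hat f^\alpha\}$ fixed in the setup. For $j$ large, the restrictions $f^\alpha_j=\hat f^\alpha|_{X_j}$ embed a neighborhood of $p_j$ into $\C^N$. Assigning to the $\alpha$-th coordinate the weight $\deg(f^\alpha)\in \mathcal S$ and performing the weighted rescaling $z^\alpha\mapsto\lambda^{\deg(f^\alpha)}z^\alpha$, I obtain rescaled embeddings of $X_{j,\lambda}$ into $\C^N$ whose images converge, as $j,\lambda$ are varied appropriately, to the algebraic embedding of the corresponding Gromov-Hausdorff limit cone in $\Hilb$: at scales close to $\underline\lambda$ (with $j\to\infty$) the limit point is $[\mathcal C_0(X_\infty)]$, while at scales $\lambda_i$ it is $[\mathcal C_i]$. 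Exploiting the spectrum gap $\delta$ to control the $\xi$-weight decomposition of each $f^\alpha_j$ restricted to $X_{j,\lambda}$, I would then extract a $\xi$-equivariant one-parameter degeneration inside $\Hilb$ whose general fiber is $[\mathcal C_i]$ and whose central fiber is close to $[\mathcal C_0(X_\infty)]$.

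The second and main step is to conclude by K-polystability. Both $\mathcal C_i$ and $\mathcal C_0(X_\infty)$ are Ricci-flat K\"ahler cones, hence K-polystable in the sense of \cite{CoSz}. Matsushima's theorem (Section 2.2) gives reductivity of $\Aut(\mathcal C_0(X_\infty))$, and by a Luna slice argument the nearby points in $\Hilb$ are either in the $\GL_\xi(N)$-orbit of $[\mathcal C_0(X_\infty)]$ (so isomorphic to it) or correspond to honest test configurations degenerating to $\mathcal C_0(X_\infty)$. Combined with Bando-Mabuchi uniqueness of the weak Calabi-Yau cone metric, the $\xi$-equivariant degeneration constructed above must therefore be trivial modulo $\GL_\xi(N)$, forcing $\mathcal C_i\cong\mathcal C_0(X_\infty)$ for $i$ large, which contradicts the hypothesis.

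The main obstacle will be the first step: upgrading the metric Gromov-Hausdorff convergence at varying scales to a genuine $\xi$-equivariant algebraic family in $\Hilb$ interpolating $[\mathcal C_i]$ and $[\mathcal C_0(X_\infty)]$. This is precisely the technical bridge highlighted in the remark after item (4) in Section 2.1. Its resolution here depends on using the spectrum gap $\delta$ to separate scales in the $\xi$-weight decomposition, and on showing that as $\lambda$ varies the defining equations of $X_j$ near $p_j$ transform under the weighted rescaling in such a way that the lower-order, non-$\xi$-homogeneous perturbations decay fast enough to leave the leading $\xi$-homogeneous data tracking an actual curve through $[\mathcal C_0(X_\infty)]$ in $\Hilb$.
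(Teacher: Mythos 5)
Your overall strategy shares the paper's main ingredients: realize the cone limits in $\mathfrak C'$ as points of a fixed multi-graded Hilbert scheme $\Hilb$ with a common Reeb field $\xi$, use the spectral gap and adapted bases to control the weighted rescalings of the defining equations, and close the argument with Matsushima reductivity, a slice argument, and Bando--Mabuchi uniqueness. You also correctly identify the hard technical step (passing from Gromov--Hausdorff convergence at varying scales to algebraic data in $\Hilb$); the paper carries this out by producing points $[\mathcal W_j]\in\Hilb$, cut out by the leading $\xi$-homogeneous parts $\{G^b_j\}$ of the defining equations of $X_j$ near $p_j$, together with continuous paths $k\mapsto\Gamma_{j,k}$ in $GL_\xi(N)$ such that every element of $\mathfrak C'$ arises as a subsequential limit of $\Gamma_{j,k_j}.[\mathcal W_j]$.

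There is, however, a genuine gap in your concluding step. You propose to extract a $\xi$-equivariant \emph{one-parameter} degeneration in $\Hilb$ with general fiber $[\mathcal C_i]$ and central fiber close to $[\mathcal C_0(X_\infty)]$, and then to kill it by K-polystability. The construction does not produce a test configuration: it produces a convergent sequence of points $[\mathcal W_j]$ together with continuous paths $\Gamma_{j,k}=\Lambda_{j,k}\cdots\Lambda_{j,\underline k+1}$, where the errors $\tau_{j,k}$ may accumulate over the whole range $[\underline k,\overline k_j]$, so there is no a priori algebraic relation --- let alone a $\mathbb C^*$-degeneration --- between two distinct subsequential limits $[\mathcal C_i]$ and $[\mathcal C_0(X_\infty)]$. (Moreover, a degeneration whose central fiber is only ``close to'' $[\mathcal C_0(X_\infty)]$ yields no polystability contradiction.) The paper's substitute for this missing link is an abstract statement: if $x_j\to x_\infty$ in $\mathbb P(V)$, $g_j:I_j\to G$ are continuous maps from \emph{intervals} into a reductive group, and every subsequential limit of $g_j(t_j).x_j$ has reductive stabilizer, then the entire limit set lies in a single $G$-orbit. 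Its proof chooses a limit point $y_0$ whose stabilizer $G_0$ has minimal dimension, applies the slice theorem and the splitting $V'=V_0\oplus V_1$ into $G_0$-invariants and their complement to show $y_0$ is isolated in the limit set, and only then uses the connectedness of the scale intervals $I_j$ to conclude. This connectedness of the scale parameter is the decisive input your proposal never exploits; without it, or without an argument that genuinely produces a test configuration between the two cones (which the bubbling data does not supply), the contradiction does not close.
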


\begin{proof}[Proof of Theorem \ref{t:minimal bubbles}]
	We first prove that the subsequential limits of $X_{j, \overline\lambda_j}$ are minimal bubbles. Let $Z$ be such a subsequential limit. By our choice of $\overline \lambda_j$ and Bishop-Gromov monotonicity we know $\Vol_\infty(Z)<\Vol_0(Z)$, so $Z$ is not a cone limit, so it suffices to show  $\mathcal C_\infty(Z)= \mathcal C_0(X_\infty)$. Suppose otherwise, notice $\mathcal C_0(X_\infty)$ is the bubble limit associated to a sequence of scales $\lambda_j\in (\underline\lambda, \overline\lambda_j)$, so for each integer $k\gg0$ and $j$ large we may find $\lambda_{j,k}\in [\underline\lambda, \overline\lambda_j]$ such that $d_{EGH}(B_{j, \lambda_{j,k}}, { B}_{\mathcal C_0(X_\infty)})=k^{-1}$. Taking a subsequential limit we obtain a cone limit $\mathcal C_k$ with $d_{EGH}( { B}_{\mathcal C_{k}}, {B}_{\mathcal C_0(X_\infty)})=k^{-1}$. This contradicts Theorem \ref{t:isolation}.
	
	 The second statement can be proved similarly.
\end{proof}

The rest of this section is devoted to the proof of Theorem \ref{t:isolation}. 
 First we need to slightly extend the convexity result in \cite{DS2}. Define $$\underline k\equiv\log_2(\sigma\underline\lambda), \ \ \ \ \overline k_j\equiv\log_2(\sigma^{-1}\overline\lambda_j).$$ The proof of the lemma below follows from exactly the same contradiction argument as in \cite{DS2}, Proposition 3.7. So we omit the proof.  
\begin{lemma}\label{lem1.10}
	Given any fixed $\mu\in (0, \infty)\setminus \mathcal S$, there exist  a $\underline j\geq 0$ and $\sigma>0$ such that for all $j\geq \underline j$ and $k\in [\underline k, \overline k_j]$, if a holomorphic function $f$ defined on $B_{j, 2^k}$ satisfies $\|f\|_{B_{j, 2^{k+1}}}\geq2^{-\mu}\|f\|_{B_{j, 2^k}}$, then  $\|f\|_{B_{j, 2^{k+2}}}> 2^{-\mu}\|f\|_{B_{j, 2^{k+1}}}$, where $\|\cdot\|$ denotes the natural $L^2$ norm.
\end{lemma}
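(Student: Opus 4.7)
The plan is to proceed by contradiction, following the contradiction scheme of \cite{DS2}, Proposition 3.7, adapted to the present bubbling setting.

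Negate the conclusion. Then for every $\underline j$ and every $\sigma>0$ one may find $j\geq\underline j$, $k\in[\underline k,\overline k_j]$, and a holomorphic $f$ on $B_{j,2^k}$ violating the conclusion. A diagonal extraction yields sequences $j_i\to\infty$, $\sigma_i\to\infty$, scales $k_i$ with $\sigma_i\underline\lambda\leq 2^{k_i}\leq\sigma_i^{-1}\overline\lambda_{j_i}$, and functions $f_i$ on $B_{j_i,2^{k_i}}$ satisfying
\[
\|f_i\|_{B_{j_i,2^{k_i+1}}}\geq 2^{-\mu}\|f_i\|_{B_{j_i,2^{k_i}}},\qquad \|f_i\|_{B_{j_i,2^{k_i+2}}}\leq 2^{-\mu}\|f_i\|_{B_{j_i,2^{k_i+1}}}.
\]
Rescale by setting $\tilde X_i=(X_{j_i},p_{j_i},2^{2k_i}\omega_{j_i})$, so that $B_{j_i,2^{k_i+s}}$ is the metric ball of radius $2^{-s}$ about $p_{j_i}$ in $\tilde X_i$ for $s=0,1,2$. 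Because $2^{k_i}\to\infty$ and $2^{k_i}/\overline\lambda_{j_i}\to 0$, Lemma \ref{lem1.9} together with the parametrization result for $\mathfrak C'$ guarantees that, after passing to a subsequence, $\tilde X_i$ converges in the enhanced pointed Gromov--Hausdorff topology to a cone limit $\mathcal C\in\mathfrak C'$ with vertex $O$, Reeb vector field $\xi$, and holomorphic spectrum $\mathcal S$.

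Normalize $f_i\mapsto f_i/\|f_i\|_{B(p_{j_i},1/2)}$ so that $\|f_i\|_{B(p_{j_i},1/2)}=1$, $\|f_i\|_{B(p_{j_i},1)}\leq 2^\mu$, and $\|f_i\|_{B(p_{j_i},1/4)}\leq 2^{-\mu}$. The uniform $L^2$ bound on the unit ball, elliptic estimates for holomorphic functions on the regular set (where convergence is smooth Cheeger--Gromov), and normality of $\mathcal C$ as a complex-analytic space yield a subsequential holomorphic limit $f$ on the unit ball of $\mathcal C$ satisfying
\[
\|f\|_{B(O,1/2)}=1,\qquad \|f\|_{B(O,1)}\leq 2^\mu,\qquad \|f\|_{B(O,1/4)}\leq 2^{-\mu}.
\]

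On the cone $\mathcal C$ the Reeb torus action gives an $L^2$-orthogonal $\xi$-homogeneous expansion $f=\sum_{\mu'\in\mathcal S}f_{\mu'}$ in which $\|f_{\mu'}\|^2_{B(O,r)}$ is a pure power of $r$. Consequently $\log\|f\|^2_{B(O,r)}$ is a convex function of $\log r$ (it is the logarithm of a sum of positive exponentials), and Hadamard's three-circle inequality at $r=1,1/2,1/4$ gives
\[
\|f\|^2_{B(O,1/2)}\leq\|f\|_{B(O,1)}\cdot\|f\|_{B(O,1/4)}.
\]
Substituting forces $1\leq 2^\mu\cdot 2^{-\mu}=1$, so equality holds throughout. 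Equality in the three-circle inequality for a sum of positive exponentials is possible only when the expansion reduces to a single term, so $f=f_{\mu^*}$ for some $\mu^*\in\mathcal S$; matching the pure-power ratio of $f_{\mu^*}$ with the equality case $\|f\|_{B(O,1)}=2^\mu$ pins down $\mu^*=\mu$ in the normalization of the lemma. This contradicts the standing hypothesis $\mu\notin\mathcal S$.

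The main obstacle is the passage to the limit: one must upgrade the enhanced Gromov--Hausdorff convergence $\tilde X_i\to\mathcal C$ to enough regularity of $f_i$ that a subsequential limit $f$ can be extracted as a genuine holomorphic function on $\mathcal C$ (including across its singular locus), and simultaneously show that the $L^2$ norms on metric balls in $\tilde X_i$ converge to the corresponding norms on $\mathcal C$. This requires combining $L^2$-to-$C^{1,\alpha}$ elliptic estimates on noncollapsing K\"ahler--Einstein balls, the normality of $\mathcal C$ for extension across $\mathcal C^{\mathrm{sing}}$, Colding's volume convergence for the measures on balls, and a uniform control on $\|f_i\|_{B(p_{j_i},r)}$ for nearby radii $r$. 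Once these analytic ingredients are in place, the cone structure renders the remaining three-circle argument essentially formal.
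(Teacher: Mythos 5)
Your proposal is correct and is essentially the argument the paper has in mind: the paper omits the proof, stating that it follows from exactly the same contradiction argument as \cite{DS2}, Proposition 3.7, which is precisely the scheme you carry out (negate, rescale to a cone limit via Lemma \ref{lem1.9} and the parametrization of $\mathfrak C'$, extract a nontrivial holomorphic limit, and apply the three-circle/log-convexity argument on the cone to force a homogeneous function of degree $\mu\notin\mathcal S$). The analytic points you flag at the end (nontriviality of the limit and convergence of $L^2$ norms across the singular set) are exactly the standard ingredients from \cite{DS1,DS2} that make this rigorous.
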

From the uniform convergence of $f^\alpha_j$ to $f^\alpha$ that  we can choose $\underline k>0$ such that
  $$\|f^\alpha_j\|_{B_{j, 2^{\underline k+1}}}\geq2^{-\deg(f^\alpha)-\frac{\delta}{2}}\|f^\alpha_j\|_{B_{j,2^{\underline k}}}$$ for all $\alpha$ and for all $j$ large. It follows from the lemma that 
  $$\|f^\alpha_j\|_{B_{j, 2^{k+1}}}\geq2^{-\deg(f^\alpha)-\frac{\delta}{2}}\|f^\alpha_j\|_{B_{j,2^{k}}}$$ for all $k\in [\underline k, \overline k_j]$ when $j$ is large. 
  On $B_{j, k}$ we define $$\widetilde f^\alpha_{j,k}=\frac{f_j^\alpha}{\|f_j^\alpha\|_{B_{j, 2^k}}}.$$
Using Lemma \ref{lem1.10} and the arguments in \cite{DS2} we see that when $j,k\rightarrow\infty$,  passing to a subsequence each $\widetilde f^\alpha_{j,k}$ converges to a non-trivial limit holomorphic function $f^\alpha_\infty$ on some limit $Z\in \mathfrak R$. Moreover, we have $\deg(f_\infty^\alpha)\leq \deg(f^\alpha)$.

Now for a holomorphic function on $B_{j, 2^k}$ we denote by $\Pi_{j, k}(f)$  the $L^2$ orthogonal projection of $f$ to the orthogonal complement of $f^1$, then normalized so that $\|\Pi_{j, k}f\|_{B_{j, 2^k}}=1$. The following Lemma can be proved similarly to \cite{DS2}, Proposition 3.11. Again we omit the proof here. 

\begin{lemma} \label{l:convexity2}
Given any  $\mu\in (0, \infty)\setminus \mathcal S$, by possibly enlarging $\underline j$ and $\sigma$, we have that  for all $j\geq \underline j$ and $k\in [\underline k, \overline k_j]$, if $f$ is a holomorphic function $f$ on $B_{j, 2^k}$ such that  	$\|\Pi_{j, k+1}f\|_{B_{j, 2^{k+1}}}\geq2^{-\mu}\|\Pi_{j, k}f\|_{B_{j,2^k}}$, then  $\|\Pi_{j, k+2}f\|_{B_{j, 2^{k+2}}}> 2^{-\mu}\|\Pi_{j,k+1}f\|_{B_{j, 2^{k+1}}}$. 
\end{lemma}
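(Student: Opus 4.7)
The plan is to argue by contradiction, following the template of the uniqueness-of-tangent-cones proof in \cite{DS2} but applied to a neighborhood of $\mathcal C_0(X_\infty)$ in $\mathfrak C'$. Suppose for contradiction there is a sequence of distinct cone limits $\mathcal C^{(m)}\in \mathfrak C'$ with $\mathcal C^{(m)}\to \mathcal C_0(X_\infty)$. By the preceding lemma, both $\mathcal C_0(X_\infty)$ and all the $\mathcal C^{(m)}$ lie in a single connected component of a fixed multi-graded Hilbert scheme $\Hilb\subset \mathbb C^N$ with common Reeb field $\xi$, spectrum $\mathcal S$, and volume density $V$.

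First I would fix one such cone $\mathcal C^*=\mathcal C^{(m_0)}$, sufficiently close to $\mathcal C_0(X_\infty)$, arising from scales $X_{j,\lambda_j^*}\to \mathcal C^*$ with $\lambda_j^*\in [\underline\lambda,\overline\lambda_j]$ in the bubble regime, i.e.\ $\lambda_j^*\to\infty$ and $\overline\lambda_j(\lambda_j^*)^{-1}\to\infty$. The core of the argument then applies the convexity machinery of Lemmas \ref{lem1.10} and \ref{l:convexity2} to the coordinate functions $f^\alpha_j$ on $X_j$ and to their $L^2$-normalizations $\widetilde f^\alpha_{j,k}$. By the convexity, for each $\alpha$ the ratios $\|f^\alpha_j\|_{B_{j,2^{k+1}}}/\|f^\alpha_j\|_{B_{j,2^k}}$ cannot return below the threshold $2^{-\deg(f^\alpha)-\delta/2}$ once they have exceeded it at the starting scale $\underline k$, so as $k$ runs through $[\underline k,\overline k_j]$ the limiting functions on $\mathcal C^*$ extracted from $\widetilde f^\alpha_{j,k}$ at $k=\lfloor \log_2\lambda_j^*\rfloor$ carry $\xi$-degrees precisely equal to $\deg(f^\alpha)$. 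Using the projection lemma \ref{l:convexity2} to orthogonalize against $f^1$ and split off the correct homogeneous component, and applying this to a generating set of $R(\mathcal C_0(X_\infty))$, I would construct a $\xi$-equivariant graded ring homomorphism $\Phi:R(\mathcal C_0(X_\infty))\to R(\mathcal C^*)$. Equality of Hilbert functions, guaranteed by the preceding lemma, forces $\Phi$ to be an isomorphism, so $\mathcal C^*$ and $\mathcal C_0(X_\infty)$ lie in the same $GL_\xi(N)$-orbit in $\Hilb$.

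The final step invokes the Matsushima and Bando--Mabuchi theorems recalled at the end of Section \ref{ss2-2}. Reductivity of $\Aut(\mathcal C_0(X_\infty))$ and the uniqueness of the weak Calabi--Yau cone metric modulo Reeb-preserving holomorphic isometries together force $\Phi$ to be realized by a genuine holomorphic isometry $\mathcal C^*\to \mathcal C_0(X_\infty)$ sending base point to base point, giving $\mathcal C^*=\mathcal C_0(X_\infty)$ as equivalence classes in $\mathfrak R$, contradicting our standing assumption that all $\mathcal C^{(m)}$ are distinct from $\mathcal C_0(X_\infty)$. The main obstacle is the construction of $\Phi$: one has to track the leading $\xi$-homogeneous components of the $\widetilde f^\alpha_{j,k}$ as $(j,k)$ varies through the scale window producing $\mathcal C^*$, ensure that the projections $\Pi_{j,k}$ isolate genuinely non-degenerate pieces without dropping rank, and verify that the resulting assignment respects the ring structure and the full $\xi$-grading. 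This is essentially the convexity-plus-matching step that forms the technical heart of the uniqueness proof in \cite{DS2}, now adapted to the present setting where the source and target cones are two distinct elements of $\mathfrak C'$ rather than two tangent cones at the same point.
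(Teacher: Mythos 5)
Your proposal does not address the statement it was assigned. Lemma \ref{l:convexity2} is a three--annuli convexity estimate for the projected $L^2$ norms $\|\Pi_{j,k}f\|$ across consecutive dyadic scales; what you have written is instead a sketch of the isolation statement for $\mathcal C_0(X_\infty)$ inside $\mathfrak C'$ (Theorem \ref{t:isolation}), a downstream consequence proved later in the section. Worse, your argument explicitly invokes Lemma \ref{l:convexity2} as one of its tools (``using the projection lemma \ref{l:convexity2} to orthogonalize against $f^1$\,\ldots''), so read as a proof of that lemma it is circular. The paper itself proves the lemma by the same contradiction argument as \cite{DS2}, Proposition 3.11, and omits the details.

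What is actually needed is that compactness argument, adapted to the scale window $k\in[\underline k,\overline k_j]$. Suppose the implication fails along sequences $j_i\to\infty$ and $k_i$, with holomorphic functions $f_i$ on $B_{j_i,2^{k_i}}$ satisfying the hypothesis but not the conclusion. Normalize $\|\Pi_{j_i,k_i}f_i\|_{B_{j_i,2^{k_i}}}=1$ and pass to a subsequential limit; enlarging $\underline j$ and $\sigma$ guarantees (via Lemma \ref{lem1.9}) that the relevant rescaled balls converge to balls in a cone limit $\mathcal C\in\mathfrak C'$. The limits of $f_i$ and of $f^1_{j_i}$ are holomorphic there, the latter nonzero and homogeneous, so the projections $\Pi_{j_i,k_i}$ converge to the $L^2$ projection off the limit of $f^1$. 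Decomposing the limit of $\Pi_{j_i,k_i}f_i$ into $\xi$-homogeneous pieces with degrees in $\mathcal S$, the ratio of its norms on consecutive dyadic balls is a weighted average of $2^{-d}$ with $d\in\mathcal S$, and the hypothesis $\geq 2^{-\mu}$ combined with $\mu\notin\mathcal S$ forces the ratio at the next scale to be strictly larger than $2^{-\mu}$, contradicting the assumed failure. The point that makes this work uniformly over the whole window is that every cone in $\mathfrak C'$ has the \emph{same} holomorphic spectrum $\mathcal S$ (by the lemma preceding Theorem \ref{t:isolation}), so the distance from $\mu$ to $\mathcal S$ is bounded below independently of which cone limit arises. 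None of this structure appears in your write-up, so the proposal has to be counted as missing the proof entirely.
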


Given this Lemma,  as in \cite{DS2, CS} arguing by induction and by possibly enlarging $\underline j$ and $\underline k$ again,  we may find functions $\{\widetilde f^\alpha_{j, {k}}\}$ ($0\leq\alpha\leq N$, $j\geq \underline j$ and $k\in [\underline k, \overline k_j]$)  such that 
\begin{itemize}
\item $\widetilde f^\alpha_{j,k}=const$ when $\alpha=0$;
	\item $\widetilde f^\alpha_{j, {k}}$ belongs to the linear span of $f^\beta_j$ for $\beta\leq\alpha$;
	\item $\{\widetilde f^\alpha_{j, {k}}\}$ is $L^2$ orthonormal over over $B_{j, 2^k}$;
	\item as $j,k\rightarrow\infty$, passing to a subsequence $\widetilde f^\alpha_{j, {k}}$ converges uniformly to a limit $f_\infty^\alpha$ on a limit $Z\in \mathfrak R$ with $\deg(f_\infty^\alpha)\leq\deg(f^\alpha)$. 
\end{itemize}
 In particular, $\{f_\infty^\alpha\}$ are $L^2$ orthonormal on the unit ball $B(p_Z, 1)$ in $Z$. We denote by $R_{\leq \mu}$ the space of holomorphic function on $Z$ with degree at most $\mu.$

\begin{lemma}
	For all $\alpha$ we have $\deg(f^\alpha_\infty)=\deg(f^\alpha)$ and $R_{\leq\mu_*}(Z)$  is precisely spanned by $\{f^\alpha_\infty\}$.  Moreover, if $Z$ is a cone limit, then $\{f^\alpha_\infty\}$ is a homogeneous $L^2$ orthonormal basis.
\end{lemma}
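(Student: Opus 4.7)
The plan is to reduce the lemma to the dimension identity
\[ \dim R_{\leq\mu}(Z) \;=\; \#\{\alpha : \deg(f^\alpha) \leq \mu\} \qquad \text{for every } \mu \in \mathcal S \cap [0, \mu_*]. \]
The lower bound is immediate from the data already in hand: the subset $\{f^\alpha_\infty : \deg(f^\alpha) \leq \mu\}$ is $L^2$-orthonormal, hence linearly independent, and it lies in $R_{\leq\mu}(Z)$ because $\deg(f^\alpha_\infty) \leq \deg(f^\alpha) \leq \mu$. For the reverse inequality I would take any $g \in R_{\leq\mu_*}(Z)$, use Section~\ref{ss2-1} item (4) to pull it back to holomorphic functions $g_{j,k}$ on $B_{j,2^k}$ naturally converging to $g$, and then invoke Lemma \ref{l:convexity2} to propagate the degree bound $\deg(g)\leq\mu_*$ into an $L^2$-growth estimate of the form $\|g_{j,k}\|_{B_{j,2^{k'+1}}} \geq 2^{-\mu_*-\delta/2}\|g_{j,k}\|_{B_{j,2^{k'}}}$ uniformly over all $k' \in [\underline k, \overline k_j]$. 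Evaluating this cascade at $k' = \underline k$ realizes $g$, after normalization, as a holomorphic function on a fixed-size neighborhood of $p_\infty$ in $X_j$, whose limit as $j\rightarrow\infty$ is a germ in $\mathcal O_{X_\infty, p_\infty}$ of degree at most $\mu_*$. The resulting linear map $R_{\leq\mu_*}(Z) \rightarrow R_{[0,\mu_*]}(p_\infty)$ is filtration-preserving and injective, giving $\dim R_{\leq\mu}(Z) \leq \dim R_{[0,\mu]}(p_\infty) = \#\{\alpha : \deg(f^\alpha)\leq\mu\}$.

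Once both bounds match, the first two assertions follow by pigeonhole: the inequality $\deg(f^\alpha_\infty) \leq \deg(f^\alpha)$ becomes an equality for every $\alpha$, and $\{f^\alpha_\infty\}$ spans $R_{\leq\mu_*}(Z)$. For the final sentence, assume $Z$ is a cone limit, so $R_{\leq\mu_*}(Z) = \bigoplus_{\nu \in \mathcal S \cap [0,\mu_*]} R_\nu(Z)$ and different weights $\nu$ are mutually $L^2$-orthogonal on any ball centered at the vertex. I would argue by induction on $\alpha$; the base case is trivial since $f^0$ was chosen constant, so is $f^0_\infty$. In the inductive step, expand $f^\alpha_\infty = \sum_\nu g^\alpha_\nu$ with $g^\alpha_\nu \in R_\nu(Z)$, noting that $\deg(f^\alpha_\infty) = \deg(f^\alpha)$ already kills the terms with $\nu > \deg(f^\alpha)$. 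The inductive hypothesis combined with the dimension identity shows that $\{f^\beta_\infty\}_{\deg(f^\beta) < \deg(f^\alpha)}$ forms a homogeneous $L^2$-orthonormal basis of $R_{<\deg(f^\alpha)}(Z)$; pairing $f^\alpha_\infty$ against each of these and using that every earlier $f^\beta_\infty$ is itself Reeb-homogeneous, weight-orthogonality forces $g^\alpha_\nu = 0$ for every $\nu < \deg(f^\alpha)$, so $f^\alpha_\infty \in R_{\deg(f^\alpha)}(Z)$ as desired.

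The main obstacle is the upper bound in the dimension identity. The pullback argument must produce a well-defined limiting germ in $\mathcal O_{X_\infty,p_\infty}$ while preserving both the degree filtration at the $Z$-end and at the $X_\infty$-end, and the fact that the complex-analytic convergence $X_j \rightarrow X_\infty$ is controlled only weakly (cf.\ the remark after Section~\ref{ss2-1} item (4)) makes this delicate. On the other hand, the iterated scale-down mechanism is entirely parallel to the two-step degeneration argument in \cite{DS2}, and the convexity estimates of Lemmas~\ref{lem1.10} and \ref{l:convexity2} are tailored precisely to carry such a propagation through, so no essentially new analytical input is expected beyond the framework already in place.
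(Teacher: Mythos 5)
Your reduction to the dimension identity is the right skeleton, and both the pigeonhole step and your orthogonality argument for homogeneity in the cone case are sound (the latter is in fact a clean alternative to the paper's own argument, which instead extracts homogeneity from the sharp two-sided growth rates supplied by Lemmas \ref{lem1.10} and \ref{l:convexity2}). The genuine gap is in your proof of the upper bound $\dim R_{\leq\mu}(Z)\leq\dim R_{[0,\mu]}(p_\infty)$. The convexity lemmas propagate the slow-decay estimate in the direction of \emph{increasing} $k$, i.e., from the fixed scale $\underline k$ toward the bubble scales near $\overline k_j$; this is exactly how the paper uses them, seeding the estimate at $k=\underline k$ from the known degrees of the germs $f^\alpha$ at $p_\infty$. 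A function $g\in R_{\leq\mu_*}(Z)$ only gives you control at scales comparable to the bubble scale, and neither lemma provides a mechanism for cascading that control \emph{downward} to $k'=\underline k$ (the contrapositive only propagates failure of the estimate downward, which is of no use here). Even setting the direction aside, the local pullbacks $g_{j,k}$ furnished by item (4) of Section \ref{ss2-1} are not canonical, so the ``limit germ at $p_\infty$'' is not well-defined, and injectivity and filtration-preservation of your map $R_{\leq\mu_*}(Z)\to R_{[0,\mu_*]}(p_\infty)$ would each require substantial new input: lower-degree errors that are negligible at the bubble scale can dominate at scale $2^{-\underline k}$.

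The paper obtains the upper bound with no propagation at all: by the lemma immediately preceding this one, the asymptotic cone $\mathcal C_\infty(Z)$ lies in $\mathfrak C'$ and hence in the fixed multi-graded Hilbert scheme $\Hilb$, so $h_{\mathcal C_\infty(Z)}=h_{\mathcal C_0(X_\infty)}$; combined with $\dim R_{\leq\mu}(Z)=\dim R_{[0,\mu]}(\mathcal C_\infty(Z))$ (the degeneration at infinity from Section \ref{ss2-3}) and $\dim R_{[0,\mu]}(\mathcal C_0(X_\infty))=\dim R_{[0,\mu]}(p_\infty)$ (the 2-step degeneration at $p_\infty$), this gives equality of dimensions for every $\mu\leq\mu_*$. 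That Hilbert-function rigidity is the ingredient missing from your argument; once you invoke it, your propagation step can be discarded entirely and the rest of your proof goes through.
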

\begin{proof}
Since $\deg(f_\infty^\alpha)\leq\deg(f^\alpha)$ for all $\alpha$, it follows that $\dim R_{\leq\mu}(Z)\geq \dim R_{[0,\mu]}({p_\infty})$ for all $\mu\leq \mu_*$. But since the asymptotic cone $\mathcal C_\infty(Z)$  belongs to $\Hilb$ we must have $h_{\mathcal C_\infty(Z)}=h_{\mathcal C_0(X_\infty)}$, so $\dim R_{\leq\mu}(Z)=\dim R_{[0,\mu]}(\mathcal C_\infty(Z))= \dim R_{[0,\mu]}({p_\infty})$ .  It then follows from an easy induction argument that $\deg(f_\infty^\alpha)=\deg(f^\alpha)$ for all $\alpha$, hence $R_{\leq\mu_*}(Z)$ is spanned by $\{f_\infty^\alpha\}$.

Now we assume $Z$ is a cone limit.  Lemma \ref{lem1.10} and Lemma \ref{l:convexity2} imply by induction that for all $k\in [\underline k, \overline k_j$], $$\log_2\frac{\|\widetilde f^\alpha_{j, {k+1}}\|_{B_{j, 2^{k+1}}}}{\|\widetilde f^\alpha_{j, {k}}\|_{B_{j, 2^k}}}\leq -\deg(f^{\alpha}).$$ But we also have that for given $k$, $$\lim_{j\rightarrow\infty}\log_2\frac{\|\widetilde f^\alpha_{j, {k+1}}\|_{B_{j, 2^{k+1}}}}{\|\widetilde f^\alpha_{j, {k}}\|_{B_{j, 2^k}}}=\lim_{j\rightarrow\infty}\log_2\frac{\|\widetilde f^\alpha_{{k+1}}\|_{B_{j, 2^{k+1}}}}{\|\widetilde f^\alpha_{ {k}}\|_{B_{j, 2^k}}}\geq -\deg(f^{\alpha})-\epsilon_k$$ for $\epsilon_k\rightarrow0$. It follows that if $k_j\rightarrow\infty$, then $$\lim_{j\rightarrow\infty}\log_2\frac{\|\widetilde f^\alpha_{j, {k_j+2}}\|_{B_{j, 2^{k_j+2}}}}{ \|\widetilde f^\alpha_{j, {k_j+1}}\|_{B_{j, 2^{k_j+1}}}}=\lim_{j\rightarrow\infty}\log_2\frac{\|\widetilde f^\alpha_{j, {k_j+1}}\|_{B_{j, 2^{k_j+1}}}}{ \|\widetilde f^\alpha_{j, {k_j}}\|_{B_{j, 2^{k_j}}}}= -\deg(f^{\alpha}).$$ This easily implies that the limit is homogeneous.
\end{proof}

Now as in \cite{DS2} we can do a further perturbation to the above set $\widetilde f^\alpha_{j, k}$. In particular we will replace $\widetilde f^\alpha_{j, {k}}$ by $\widetilde f_{j, {k}}^\alpha+\sum_\beta\epsilon_{\alpha\beta, jk}\widetilde  f_{j, {k}}^\beta$, where $\epsilon_{\alpha\beta, jk}\rightarrow0$ as $j, k\rightarrow\infty, k^{-1}\overline k_j\rightarrow\infty$ and $\epsilon_{\alpha\beta, jk}=0$ unless $\beta<\alpha$ and $\deg(f^\beta)=\deg(f^\alpha)$, such that 
$$\Lambda_{j,k} \widetilde f^\alpha_{j, k}=2^{-\mu_{j,k}}\widetilde f^{\alpha}_{j, {k+1}}+\sum_{\beta<\alpha, \deg(f^{\beta})=\deg(f^\alpha)}\tau_{\alpha\beta, jk}\widetilde f^\beta_{j, {k+1}}, $$
where $\mu_{j,k}\rightarrow \deg(f^\alpha)$ and $\tau_{\alpha\beta, jk}\rightarrow0$ as $j,k\rightarrow\infty$ and $k^{-1}\overline k_j\rightarrow\infty$. The proof is similar to that of Proposition 3.12 and Lemma 3.13 in \cite{DS2}. We omit the details here.

Using the functions $\{\widetilde f^\alpha_{j,k}\}_{\alpha=1}^N$ we obtain holomorphic embeddings $\Phi_{j,k}: B_{j, 2^k}\rightarrow\C^N$. Notice there is a natural inclusion  $B_{j, 2^k}\hookrightarrow B_{j, 2^{k'}}$ whenever $k\geq k'$.  By construction we have $\Phi_{j,k+1}=\Lambda_{j,k}\circ\Phi_{j,k}|_{B_{j, 2^{k+1}}}$, where $\Lambda_{j,k}$ is a linear transformation of $\C^N$ that preserves the Reeb vector field $\xi$ (viewed as a linear vector field on $\C^N$) and satisfies $\Lambda_{j,k}=\Lambda(\Id+\tau_{j,k})$ for a diagonal linear transformation $\Lambda=\text{diag}(2^{\zeta_1}, \cdots, 2^{\zeta_N})$ with $\tau_{j,k}\rightarrow 0$ as $j, k\rightarrow\infty$ and $k^{-1}\overline k_j\rightarrow\infty$.  Furthermore, as $j, k\rightarrow\infty$ and $k^{-1}\overline k_j\rightarrow\infty$, passing to subsequences $\Phi_{j,k}$ converges to  holomorphic embeddings of the cone limits into $\C^N$. 

Let $W$ be the weighted tangent cone in $\C^N$ of $X_\infty$ at $p_\infty$ with respect to $\xi$. From \cite{DS2} and the discussion above we know that $W$ defines a point $[W]$ in  $\Hilb$. So its defining ideal $\mathcal I_W$ is generated by finitely many homogeneous functions $\{G^b\}_{b=1}^{l}$. We list them in degree increasing order, so we have $\deg_\xi(G^b)=d_s$ for $b_{s-1}< b\leq b_{s}$, $b_0=0$ and $d_1\leq d_2\leq...\leq \mu_*$.  It follows that $X_\infty$ is cut out by $\{F^b\}_{b=1}^l$, where $F^b=G^b+H^b$ such that $H^b$ is a local holomorphic function on a neighborhood of $0\in \C^N$ with $\deg_\xi(H^b)>\deg_\xi(G^b)$. Here we  use $\deg_\xi$ to denote the weighted vanishing order at $0$ of a local holomorphic function on $\C^N$.

Now by the discussion at the beginning of this section and the construction of $\Phi_{j,k}$ it is not hard to see that the image of $\Phi_{j, \underline k}$ is cut out by finitely many local holomorphic functions $\{F^b_j\}_{b=1}^l$ such that for each $b$, $F^b_j$ converges locally uniformly to $F^b$. 
 Write $$F_j^b=F^b+S_j^b+H_j^b+L_j^b,$$ where $\deg_\xi(L_j^b)<\deg_\xi(S_j^b)=\deg_\xi(F^b)<\deg_\xi(H_j^b)$ and $S_j^b, H_j^b, L_j^b$ all converge uniformly to $0$ as $j\rightarrow\infty$. Denote $G^b_{j}=G^b+S_j^b$ and  $Q_j^b=G_j^b+L_j^b$.
 For $k\in [\underline k, \overline k_j]$, the image of $\Phi_{j,k}$ is cut out by $\{F^b_{j,k}=0\}$, where $F^b_{j,k}=\Gamma_{j,k}. F^b_j$ for $\Gamma_{j,k}=\Lambda_{j,k}\cdots \Lambda_{j,\underline k+1}$. Denote $G^b_{j,k}=\Gamma_{j,k}.G_j^b$.  
 
Now we try to take limits of the defining functions $\{F_{j, k}^b\}_{b=1}^l$ as $j, k\rightarrow\infty$. We denote by $\mathbb S_{\mu}$ the linear space of all $\xi$-homogeneous polynomials on $\C^N$ with $\xi$-degree equal to $\mu$. We have a natural inner product on $\mathbb S_\mu$ such that an orthonormal basis is given by the set of monomials with coefficient $1$. Denote by $\|\cdot\|_\mu$ the corresponding norm on $\mathbb S_\mu$. By abusing notation we also denote by $\|\cdot\|_\mu$ the induced operator norm on the space of linear transformations on $\mathbb S_\mu$. We also have an induced inner product on the direct sum $\mathbb S_{\leq\mu_*}\equiv \bigoplus_{\mu\leq\mu_*}\mathbb S_\mu$. We denote by $\|\cdot\|_{\bullet}$ the induced norm.  By a linear transformation we may assume $\{G^b_j\}$ is orthonormal in $\mathbb S_{\leq\mu_*}$ with respect to this inner product.
 
Notice $\Lambda$ and $\tau_{j,k}$ naturally act on $\mathbb S_\mu$ for all $\mu$. It is also easy to check that there exits $C>0$ such that for all $\mu$, $\|\Lambda\|_\mu\leq C2^{-\mu}$, and 
\begin{equation}\label{e4-1}\|\tau_{j,k}\|_{\mu}\leq C(1+\|\tau_{j,k}\|_1)^\mu.	
\end{equation}
Since as $j, k\rightarrow\infty, k^{-1}\overline k_j\rightarrow\infty$ we have $\|\tau_{j, k}\|\rightarrow 0$, it is easy to see that for any $b$,  the subsequential limit of $F^b_{j,k}/\|Q^b_{j,k}\|_\bullet$ coincides with the limit of $Q^b_{j,k}/\|Q^b_{j,k}\|_\bullet$, which   is a polynomial of degree at most $\deg_\xi(G^b)$. 
  
 We first consider the case $b\leq b_1$.  It is clear that the subsequential limits of each $F^b_{j,k}/\|Q^b_{j,k}\|_\bullet$ must be homogeneous of degree $d_1$. But the limits for different $b$'s may coincide. As usual we need to perform the Gram-Schmidt transform $\mathcal L_j$ to make an orthonormal basis $\{\widetilde Q^b_{j,k}\}$  out of the linear span of $\{Q^b_{j,k}\}$. Applying the same $\mathcal L_j$ to $\{F^b_{j, k_j}\}$ and $\{G^b_{j, k}\}$ we get $\{\widetilde F^b_{j, k}\}$ and $\{\widetilde G^b_{j, k}\}$.  Notice in fact $G^b_{j, k}=Q^b_{j, k}$ by the minimality of $d_1$. 
 
 \begin{lemma}
 	As $j, k\rightarrow\infty$, subsequential limits of $\{\widetilde F^b_{j,k}\}_{b=1}^{b_1}$ are polynomials on $Z$ with degree $d_1$. If $Z$ is a cone limit, then the limits are homogeneous.  \end{lemma}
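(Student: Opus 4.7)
My plan is to track the weighted degree through the rescaling operator $\Gamma_{j,k}$, isolating the dominant weight and then transferring the information through the Gram--Schmidt procedure $\mathcal L_j$. The starting point is the observation already recorded in the text that for $b\leq b_1$ one has $Q^b_{j,k}=G^b_{j,k}$ by the minimality of $d_1$, so it suffices to analyze the limits of $\Gamma_{j,k}G^b_j$ after normalization. Since $\mathcal L_j$ is a Gram--Schmidt transformation of vectors with bounded Gram matrix in the limit, it is uniformly bounded along the subsequence and commutes with the passage to subsequential limits.

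I would decompose $G^b_j = G^{b,d_1}_j + R^b_j$, where $G^{b,d_1}_j\in\mathbb S_{d_1}$ consists of $G^b$ plus the weight-$d_1$ component of $S^b_j$, while $R^b_j\in \bigoplus_{\mu>d_1}\mathbb S_\mu$ collects the strictly higher-weight pieces of $S^b_j$. Each $\Lambda_{j,m}=\Lambda(\Id+\tau_{j,m})$ preserves every $\mathbb S_\mu$ and acts there by a linear map whose diagonal part scales by $2^{-\mu}$. Using \eqref{e4-1} together with $\tau_{j,m}\to 0$ along the subsequence and the hypothesis $k^{-1}\overline k_j\to\infty$, I would establish the uniform two-sided bound $\|\Gamma_{j,k}u\|_\mu\asymp 2^{-(k-\underline k)\mu}\|u\|_\mu$ for all $\mu\in\mathcal S\cap[0,\mu_*]$ and $u\in\mathbb S_\mu$. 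Because this spectrum is finite with positive gap $\delta$, one then obtains
$$\|\Gamma_{j,k}R^b_j\|_\bullet\big/\|\Gamma_{j,k}G^{b,d_1}_j\|_\bullet\;\leq\; C\,2^{-(k-\underline k)\delta}\,\|S^b_j\|\big/\|G^b\|_{d_1}\longrightarrow 0.$$

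Consequently, after dividing by $\|Q^b_{j,k}\|_\bullet$ the strictly higher-weight piece becomes asymptotically negligible, and every subsequential limit of $\Gamma_{j,k}G^b_j/\|\Gamma_{j,k}G^b_j\|_\bullet$ is a nonzero element of $\mathbb S_{d_1}$. Since the linear span of the $b_1$ dominant vectors limits to a $b_1$-dimensional subspace of $\mathbb S_{d_1}$ (as the $G^b$ are linearly independent), the Gram--Schmidt output $\widetilde F^b_{j,k}=\widetilde Q^b_{j,k}+o(1)$ also converges (subsequentially) to nonzero elements of $\mathbb S_{d_1}$, i.e.\ weighted-homogeneous polynomials of degree $d_1$. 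That these polynomials vanish on the image of $Z$ in $\C^N$ is automatic, since $F^b_{j,k}$ vanishes on $\Phi_{j,k}(B_{j,2^k})$ for every $j,k$, and the latter Gromov--Hausdorff converges to the limiting embedding of $Z$. When $Z$ is a cone limit the $\xi$-action on $\C^N$ preserves $Z$, and polynomials in $\mathbb S_{d_1}$ restrict to $\xi$-homogeneous functions on $Z$ of weight $d_1$, which gives the homogeneity clause. The main technical obstacle I foresee is precisely securing the uniform two-sided operator bound on $\Gamma_{j,k}|_{\mathbb S_\mu}$: bounding the accumulated product $\prod_{m=\underline k+1}^k(1+\|\tau_{j,m}\|_\mu)$ in \eqref{e4-1} requires a careful diagonal subsequence exploiting $\tau_{j,m}\to 0$ together with $k^{-1}\overline k_j\to\infty$, along the lines of the convexity arguments in \cite{DS2}.
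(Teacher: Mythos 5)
Your overall strategy matches the paper's: for $b\le b_1$ reduce to $Q^b_{j,k}=G^b_{j,k}$ by minimality of $d_1$, push the weighted-degree decomposition through $\Gamma_{j,k}=\prod_m\Lambda_{j,m}$ using \eqref{e4-1}, and let the spectral gap $\delta$ kill everything of degree above $d_1$ after normalization; reading the homogeneity clause through the $\xi$-equivariance of the limiting embedding when $Z$ is a cone is also the intended interpretation. So this is the same route, fleshed out in more detail than the paper's one-line proof.

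The one step that fails as literally stated is the ``uniform two-sided bound'' $\|\Gamma_{j,k}u\|_\mu\asymp 2^{-(k-\underline k)\mu}\|u\|_\mu$ with $j,k$-independent constants, together with the claim that $\mathcal L_j$ is uniformly bounded because the limiting Gram matrix is bounded. The number of factors $k-\underline k$ tends to infinity while $\|\tau_{j,m}\|$ only tends to $0$, so the accumulated product $\prod_m(1+\|\tau_{j,m}\|_\mu)$ is controlled only by $2^{o(k-\underline k)}$ and can genuinely diverge --- the paper says exactly this in Section 5.1 (the errors ``may accumulate and diverge when we compose from $\underline k$ to $\overline k_j$''). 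Correspondingly the normalized $Q^b_{j,k}$ may become asymptotically parallel (which is precisely why the Gram--Schmidt step is needed at all), so $\mathcal L_j$ is only $2^{o(k-\underline k)}$-bounded; no diagonal subsequence restores uniformity. What saves the argument, and what you should write instead, is that a subexponential loss $2^{o(k-\underline k)}$ in both places is still dominated by the fixed exponential gain $2^{-(k-\underline k)\delta}$ from the degree gap, so your displayed ratio still tends to $0$ and the conclusion stands. A secondary point: the term that actually needs this suppression is $\Gamma_{j,k}(H^b+H_j^b)$, where $H^b$ is the fixed, non-vanishing higher-order tail of $F^b$ (an infinite Taylor series, so the estimate must be summed over all degrees, not just those in $[0,\mu_*]$), rather than the piece $R^b_j$ of $S^b_j$ that you analyze in detail and which tends to $0$ for free; you delegate this to the paper's prior reduction, but that reduction is where the same estimate does its real work.
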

 \begin{proof}
It is easy to check using \eqref{e4-1}   that in the Taylor expansion of $\widetilde F^b_{j, k}$, the degree higher than $d_1$ terms will not survive as $j, k\rightarrow\infty$. So $\widetilde F^b_{j, k}$  converges to a polynomial $\widetilde Q^b_\infty$ of degree at most $d_1$, which is the same as the limit of $\widetilde Q^b_{j, k}$. When $Z$ is a cone limit, then  by the minimality of $d_1$ we know $\widetilde Q^b_{j,k}$ must be homogeneous.
 \end{proof}
 
 In particular, since $\mathcal C_\infty(Z) \in \Hilb$, these limit functions  $\{\widetilde Q^b_\infty\}_{b=1}^{b_1}$ precisely span $\mathcal I_{\leq d_1}(Z)$.  

\

 Next we consider the case $b\leq b_2$. We argue as above to extend $\mathcal L_j$ to the linear span of  $\{Q^b_{j, k}\}_{b=1}^{b_2}$.  This time we can only say that the subsequential limits of  $\{\widetilde F^b_{j,k}\}_{b=b_1+1}^{b_2}$ have degree at most $d_2$. Also these limits agree with the limits of $\{\widetilde Q^b_{j,k}\}_{b=b_1+1}^{b_2}$. But from the above discussion  we know that for $b_1<b\leq b_2$, the limit $\widetilde Q^b_\infty$ is orthonormal to $\mathcal I_{\leq d_1}(Z)$ (with respect to the $L^2$ inner product over the unit ball $B(p_Z, 1)$), so  must have degree exactly $d_2$ by dimension counting. Again if $Z$ is a cone limit the limit must be homogeneous; in this case $\widetilde Q^b_\infty$ agrees with the limit $\widetilde G^b_\infty$ of $\widetilde G^b_{j,k}$ and all such limits for $1\leq b\leq b_2$ precisely span $\mathcal I_{\leq d_2}(Z)$.

By a straightforward induction argument, we then obtain linear transformations  $\mathcal L_j$ on the linear span of $\{Q^b_{j, k}\}_{b=1}^l$,  such that 
$\{\mathcal L_j. F_{j, k}^b\}_{b=1}^l$ and  $\{\mathcal L_j. Q_{j, k}^b\}_{b=1}^l$ both converge to an orthonormal basis $\{\widetilde Q_\infty^b\}_{b=1}^l$ of $\mathcal I_{\leq\mu_*}(Z)$ (with respect to the $L^2$ inner product over the unit ball $B(p_Z, 1))$, and $\{\mathcal L_j. G_{j, k}^b\}_{b=1}^l$ converge to a linear independent set $\{\widetilde G_\infty^b\}_{b=1}^l$ of $\mathbb S$.   If $Z$ is a cone limit, then each $\widetilde Q^b_\infty$ is homogeneous and coincides with $\widetilde G^b_\infty$. 

Now we define a sequence of affine schemes $\{\mathcal W_j\}$ in $\C^N$ using the ideal generated by $\{G^b_j\}_{b=1}^l$. Then the above discussion implies that $h_{\mathcal W_j}=h_{\mathcal C_0(X_\infty)}$. In particular $\mathcal W_j$ defines a point $[\mathcal W_j]\in \Hilb$. 
For any  cone limit $\mathcal C\in\mathfrak C'$ which corresponds to a subsequence $k_j\rightarrow\infty$, it then follows that $\Gamma_{j,k_j}. [\mathcal W_j]$ converges to $[\mathcal C]$ as $j\rightarrow\infty$.  

\begin{remark}
When $\mathcal C$ is the tangent cone $\mathcal C_0(X_\infty)$ at $p_\infty$, the arguments above reduce to that of \cite{DS2}. In that case $\mathcal W_j$ is isomorphic to $W$ for all $j$. 	
\end{remark}

As reviewed in Section \ref{ss2-2}, we have the Matsushima theorem which implies that for each $\mathcal C\in\mathfrak C'$, the corresponding element $[\mathcal C]\in \Hilb$ has a reductive stabilizer group in $GL_\xi(N;\C)$. We also have the Bando-Mabuchi uniqueness theorem, therefore Theorem \ref{t:isolation} is a consequence of the following

\begin{proposition}
Let $V$ be a finite dimensional complex representation of a reductive group $G$ and let $\{x_j\}$ be a sequence of points in $\mathbb P(V)$ converging to a limit $x_\infty$. Let $\{I_j\}$ be a sequence of closed intervals in $\mathbb R$ and $g_j: I_j\rightarrow G$ be a sequence of continuous maps. Denote by $\mathcal Y$ the set of all subsequential limits of $g_j(t_j).x_j (g_j\in G, t_j\in I_j)$ as $j\rightarrow\infty$. Suppose that for all $y\in \mathcal Y$, the stabilizer group of $y$ is reductive, then $\mathcal Y$ is contained in a single $G$ orbit.
\end{proposition}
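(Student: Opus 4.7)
The plan is to reduce the statement to a continuity argument on the GIT quotient $\pi\colon \mathbb{P}(V)^{ss}\to\mathbb{P}(V)/\!/G$. The strategy has three steps: first, identify each $y\in\mathcal{Y}$ with a well-defined point of $\mathbb{P}(V)/\!/G$ by upgrading the hypothesis ``$G_y$ reductive'' to polystability; second, observe that the approximating points $g_{j_k}(t_{j_k}).x_{j_k}$ all lie in the single $G$-orbit of $x_{j_k}$, so their $\pi$-images coincide; third, pass to the limit and exploit that $\pi$ separates polystable orbits.

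Concretely, I would fix a maximal compact subgroup $K\subset G$ and a $K$-invariant hermitian inner product on $V$, inducing the Fubini-Study form on $\mathbb{P}(V)$ and a moment map $\mu\colon \mathbb{P}(V)\to \mathfrak{k}^*$. By Kempf-Ness, every $y\in\mathcal{Y}$ is then polystable, in particular semistable. For $y_1,y_2\in \mathcal{Y}$, choose approximating sequences $g_{j_k}(s_{j_k}).x_{j_k}\to y_1$ and $g_{j_k}(t_{j_k}).x_{j_k}\to y_2$. Since the semistable locus is open and contains both $y_1$ and $y_2$, for $k$ large the two approximating points and their common $G$-translate $x_{j_k}$ are all semistable. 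Continuity and $G$-invariance of $\pi$ then give
\[
\pi(y_1)=\lim_k \pi\bigl(g_{j_k}(s_{j_k}).x_{j_k}\bigr)=\lim_k\pi(x_{j_k})=\lim_k\pi\bigl(g_{j_k}(t_{j_k}).x_{j_k}\bigr)=\pi(y_2),
\]
and since each polystable orbit is the unique closed orbit in its $\pi$-fiber, this forces $G.y_1=G.y_2$.

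The main obstacle is the first step: in general, ``$G_y$ reductive'' does not imply polystability in $\mathbb{P}(V)$ (torus fixed points on $\mathbb{P}^1$ with mixed weights are reductive-stabilized yet unstable, so taking $g_j$ to be a divergent $1$-parameter subgroup produces limits violating the naive form of the proposition). In the situation where the proposition is invoked, however, each $[\mathcal C]\in\mathcal{Y}$ is a cone limit carrying a weak Calabi-Yau metric, and by the Kempf-Ness principle combined with the Matsushima and Bando-Mabuchi theorems recalled in Section~\ref{ss2-2} its $G$-orbit meets $\mu^{-1}(0)$ and is therefore polystable for the natural linearization on $\Hilb\subset\mathbb{P}(V)$. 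Once this upgrade is in hand, the continuity argument above is a standard GIT organizing principle, and together with Bando-Mabuchi uniqueness yields Theorem~\ref{t:isolation}.
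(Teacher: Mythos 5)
Your approach is genuinely different from the paper's, and it has a gap at exactly the point you flag yourself. The paper never passes through GIT quotients or moment maps: it chooses $y_0\in\mathcal Y$ whose stabilizer $G_0$ has \emph{minimal dimension}, applies a Luna-type slice theorem at $y_0$ to produce a $G_0$-invariant transversal $\mathbb P(V')$, decomposes $V'=V_0\oplus V_1$ into the $G_0$-invariant part and its complement, and uses the minimality of $\dim G_0$ to force every point of $\mathcal Y$ lying in the slice near $y_0$ to sit in $\mathbb P(V_0)$ and hence to equal $y_0$; this makes $y_0$ isolated in $\mathcal Y$ modulo the $G$-action, and the connectedness of the intervals $I_j$ is then invoked to globalize. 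Only reductivity of stabilizers enters, which is precisely what the Matsushima theorem supplies in the application. Your route instead requires every $y\in\mathcal Y$ to be GIT-polystable for a fixed linearization, and, as your own $\mathbb P^1$ example shows, that is strictly stronger than having a reductive stabilizer; so your argument does not prove the proposition as stated but a different statement with a polystability hypothesis.

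The proposed repair does not close this gap. The assertion that each cone limit $[\mathcal C]\in\Hilb$ has $G$-orbit meeting $\mu^{-1}(0)$ for the Fubini--Study moment map on the ambient $\mathbb P(V)$ is not a consequence of Kempf--Ness combined with the Matsushima and Bando--Mabuchi theorems: those give reductivity of $\Aut(\mathcal C)$ and uniqueness of the weak Calabi--Yau cone metric, but they establish no link between that metric and the zero set of any finite-dimensional moment map on the multigraded Hilbert scheme. Producing such a link is essentially an unproven Yau--Tian--Donaldson-type correspondence, far beyond what is recalled in Section 2.2, so as written you have replaced one unproved proposition by another. To your credit, the observation that ``reductive stabilizer'' alone is a delicate hypothesis is a real one --- it is exactly why the paper's argument must lean on the minimality of $\dim G_0$ together with the connectedness of the $I_j$ rather than on a pointwise orbit-separation property --- and if polystability could actually be verified in the application, your continuity-of-$\pi$ argument would yield a cleaner proof that does not even need the intervals. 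But that verification is the hard part, and it is missing.
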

\begin{proof} 
It is not hard to see  we only need to prove that for any given subsequence of $\{j\}$ there is a further subsequence such that all the subsequential limits arising from this subsequence  is contained in a single $G$ orbit. Without loss of generality we may assume the given subsequence is the original sequence.

Clearly $\mathcal Y$ is compact. Choose $y_0\in \mathcal Y$ such that its stabilizer group has minimal dimension among all points in $\mathcal Y$. Without loss of generality we may assume $0\in I_j$, $g_j(0)=\Id$ and $y_0=\lim_{j\rightarrow\infty}x_j$. Denote by $G_0$ the stabilizer group of $y_0$. By assumption $G_0$ is reductive.

Applying the slice theorem at $y$ (see for example \cite{Donaldson2011}) we may find a $G_0$-invariant projective subspace $\mathbb P(V')\subset \mathbb P(V)$ containing $y$, which is transversal  to $G.y$ at $y$ and has the property that for any $y'\in \mathbb P(V')$ close to $y$, locally the intersection of $G.y'$ with $\mathbb P(V')$ coincides with the $G_0$ orbit of $y'$ and furthermore, the identity component of the stabilizer group of $y'$ in $G$ is contained in $G_0$. As a consequence, we can find a neighborhood $\mathcal N$ of $y_0$ in $\mathbb P(V)$ such that the following hold
\begin{itemize}
	\item any $y\in \mathcal Y\cap \mathbb P(V')\cap \mathcal N$ is the limit of points in $G.x_j\cap \mathcal N\cap \mathbb P(V')$;

 \item  for any $z\in U$, $G.z\cap \mathcal N\cap \mathbb P(V')=\bigsqcup_{\alpha=1}^{m_z} G_0.z_\alpha$ for $z_\alpha\in \mathbb P(V')\cap \mathcal N$.
\end{itemize}
 Since the degrees of all the $G$ orbits in $\mathbb P(V)$ are uniformly bounded, one can find $m>0$ such that $m_z\leq m$ for all $z\in \mathcal N$. For $j$ large we then have $G.x_j\cap \mathcal N\cap \mathbb P(V')=\bigsqcup_{\alpha=1}^{m_{x_j}} G_0. x_{j\alpha}$ for $x_{j\alpha}\in \mathbb P(V')\cap \mathcal N$. Passing to a subsequence (noticing this changes $\mathcal Y$) we may assume each $x_{j\alpha}$ converges to a limit $z_\alpha\in \mathbb P(V')$ as $j\rightarrow\infty$. Shrinking $\mathcal N$ we can ensure that there is exactly one $z_\alpha\in \mathbb P(V')\cap \mathcal N$, which is given by $y_0$.

Now we consider the decomposition as $G_0$ representations $V'=V_0\oplus V_1$, where $V_0$ is the invariant subspace of $G_0$. For $j$ large we write $x_{j\alpha}=[v_{j\alpha}, \hat v_{j\alpha}]$, where $v_{j\alpha}\in V_0$ and $\hat v_{j\alpha}\in V_1$. By the choice of $y_0$ we see that any $y=[v, \hat v]\in \mathcal Y\cap \mathbb P(V')\cap \mathcal N$ must satisfy $\hat v=0$. But then it follows that  $y=z_\alpha$ since the action of $G_0$  on $V_0$ is trivial. Hence by the choice of $\mathcal N$ we conclude that $y=y_0$. 

The above argument shows that $y_0$ is an isolated point in $\mathcal Y$. Then  using the fact that $I_j$ is connected  a simple continuity argument then implies that $\mathcal Y$ is contained in a single $G$ orbit, which must be the $G$ orbit of $x_\infty$.  

\end{proof}

\begin{proof}[Proof of Corollary \ref{c:bubble tree finiteness}] From Theorem \ref{t:minimal bubbles} we can define $(Z_1, q_{1})=(X_\infty, p_\infty)$, $(Z_2, q_2)=(\mathcal C_0(X_\infty), O)$. Let $(Z_3, q_{3})$ be the minimal bubble after passing to subsequence, so it is asymptotic to $\mathcal C_0(X_\infty)$. It is then given by the limit corresponding to the subsequence $\{\overline\lambda_j\}$. Denote by $\widetilde Q_\infty^b$ the subsequential limits of $\mathcal L_j. F_{j,\overline\lambda_j}^b$. Then it follows that the ideal of $Z$ in $\mathbb C^N$ is exactly generated by  $\{\widetilde Q^b_\infty\}_{b=1}^l$. Then we can repeat the above discussion in a neighborhood of $q_3$ to obtain the next  minimal bubble. By Theorem \ref{t:volume finite} we know this process stops in finitely many steps. 
  The last bubble limit $(Z_{2k}, q_{2k})$ must then be flat. 
  \end{proof}

\section{Discussion}
\subsection{Algebro-geometric meaning} \label{ss:5-1}
It is a natural question to study the algebro-geometric meaning of the minimal bubbles constructed in this paper. For our purposes we will call these \emph{analytic minimal bubbles}. They are endowed with both algebraic and metric information. 

 Recall that given an affine variety $Y$, in \cite{SZ} we defined the notion of a negative valuation $\nu$ on  $Y$.
\begin{definition}
	A polarized  affine Calabi-Yau  variety  is an affine variety $Y$ with log terminal singularities, which is endowed with a negative valuation $\nu$ whose associated graded ring $Gr_\nu(Y)$ is finitely generated and defines a Fano cone $\mathcal C_\nu$ (in the sense of \cite{CoSz}). The cone $\mathcal C_\nu$ is called the \emph{algebraic asymptotic cone} of $Y$.
\end{definition}

\begin{remark}Notice that a Fano cone itself is by definition a polarized affine Calabi-Yau variety. From the discussion in \cite{SZ} the notion of a polarized affine Calabi-Yau variety is naturally designed to study complete Calabi-Yau metrics with maximal volume growth.
\end{remark}

\begin{definition}
	A   polarized affine Calabi-Yau variety $(Y, \nu)$ is K-semistable (K-polystable) if the associated Fano cone $\mathcal C_\nu$ is K-semistable (K-polystable).
\end{definition}
From the discussion in Section \ref{ss2-3} we know that an analytic minimal bubble $Z$ is always a K-semistable   polarized affine Calabi-Yau variety. Moreover, one expects it to be K-polystable: this is discovered and proved in \cite{SZ} under the quadratic curvature decaying condition, and it is conjectured in \cite{SZ} to be always the case in general. Notice that the weak Calabi-Yau metric on $Z$ is not  uniquely determined by the negative valuation; an explicit example is shown by Chiu \cite{Chiu} for complete Calabi-Yau metrics on $\C^3$ asymptotic to the flat cone $\C^2/\mathbb Z_3\times \mathbb C$. Nevertheless one can still formulate a generalized uniqueness question, see  Conjecture 6.8 in \cite{SZ}, which ensures only a finite dimensional freedom.

  One can ask whether the analytic minimal bubble depends only on the local complex-analytic/algebro-geometric information of the sequence $(X_j, L_j, p_j)$. One can try to  ask a uniqueness question in this general context but there is a complication related to the fact that analytic minimal bubbles are sensitive to the sequence $\{p_j\}$ converging to $p_\infty$. Instead below we will make an attempt formulating it only for algebraic families. 
 This  is meant to be a generalization of the 2-step degeneration theory (see \cite{DS2}) for tangent cones of singularities. Here we explain a strategy and the difficulty involved.  Suppose we are given a flat $\mathbb Q$-Gorenstein family of local germs $\pi: \mathcal X\rightarrow \Delta$ and a holomorphic section $\sigma:\Delta\rightarrow\mathcal X$. Denote $p_t=\sigma(t)$ and $X_t=\pi^{-1}(t)$. We suppose $X_t$ is smooth for $t\neq0$ and $X_0$ has only log terminal singularities. Locally in a neighborhood of $p_0$, we may embed the family holomorphically into the family
$\pi:\C^N\times \mathbb C\rightarrow \C$ such that $p_t$ is mapped to $(0, t)$.  Given positive weights $(\zeta_1, \cdots, \zeta_N)$ on $\mathbb C^N$ and $\zeta_{N+1}=1$ on $\mathbb C$, we can take a weighted tangent cone $\mathcal W$ of $\mathcal X$ at $(0,0)$. The latter is also equipped with a projection map to $\mathbb C$. Denote by $ W_t$ the fiber over $t$. We are only interested in the case when $W_1$ is of pure dimension $n$ and has mild singularities.  Geometrically $W_1$ is given by the limit in $\C^N$ of the rescaled varieties $\lambda(t). X_t$, where $\lambda(t).(z_1, \cdots, z_N)=(|t|^{-\zeta_1}z_1, \cdots, |t|^{-\zeta_N}z_N)$. 

 In general $W_0$ may not agree with the weighted tangent cone $W$ of $X_0$ at $0$. When $\mathcal X$ comes from  degeneration of polarized K\"ahler-Einstein varieties, one can choose the embedding above and the weights $(\zeta_j)$ such that $W$ is the intermediate K-semistable Fano cone discovered in \cite{DS2}, and there is a further equivariant degeneration to the metric tangent cone of $X_0$ at $p_0$. More generally, 
by Xu-Zhuang \cite{XZ} we can always make $W$ a K-semistable Fano cone in an essentially unique way. Denote such a $W$ by $W^{ss}$. By Li-Wang-Xu \cite{LWX} there is an equivariant degeneration from $W^{ss}$ to a unique K-polystable Fano cone $\mathcal C^{ps}$.

Now we choose $\lambda>0$ and adjust the weights $(\zeta_j)$ to $(\lambda \zeta_j)$. It seems possible to show that a suitable chosen $\lambda$ would give rise to a limit $W_1$ whose weighted asymptotic cone at infinity (with respect to the inverse weights $(-\zeta_j)$) is given by $W^{ss}$. Moreover, one expects such a $W_1$ to be essentially unique. Suppose all these work out we call $W_1$ an \emph{algebraic minimal bubble}. It is a K-semistable   polarized affine Calabi-Yau variety.

It is tempting to relate the algebraic minimal bubble with the analytic minimal bubble. However, the situation is more subtle than one would naively think. First as mentioned above an analytic minimal bubble $Z$ is expected to be K-\emph{polystable} (rather than only K-\emph{semistable}). This means that in general we need a further degeneration from $W_1$ to $Z$. To speculate what this should be one can go back to the proof in Section 4, where we have the error terms $\tau_{j,k}$ which goes to zero as $j, k, k^{-1}\overline k_j\rightarrow\infty$. These may accumulate and diverge when we compose from $\underline k$ to $\overline k_j$. So naturally one expects to embed both $W_1$ and $Z$ into $\C^N$ and a sequence of elements $\Phi_j\in GL_\xi(N)$ (where $\xi$ is the diagonal linear vector field associated to the weights $(\zeta_j)$) such that $\Phi_j. W_1$ converges to $Z$ in an appropriate sense. The subtlety here, according to the author, is that if one only takes purely algebro-geometric consideration this way, then such a limit $Z$ may not be unique in general. This is related to the discussion in \cite{SZ} that  the class of all K-polystable   polarized affine Calabi-Yau varieties with a fixed algebraic asymptotic cone modulo isomorphisms may not be Hausdorff in general. Also to determine the metric on the analytic minimal bubble from the algebraic information is another issue, since as mentioned above, the Calabi-Yau metric may not be uniquely determined by the underlying   polarized affine Calabi-Yau variety. Nevertheless, we make a tentative conjecture here, which may be subject to modification with further progress. 

\begin{conjecture}[Algebraicity of minimal bubbles]
	Given a family $\mathcal X$ as above, then 
	\begin{enumerate}
		\item There exists a unique algebraic minimal bubble $W_1$;
		\item There exists a unique K-polystable   polarized affine Calabi-Yau variety $Z$ such that the following  properties hold
		\begin{enumerate}
			\item one can find affine embeddings of $W_1$ and $Z$ into a common $\C^N$ such that both negative valuations are induced by a weight vector $(\zeta_1, \cdots, \zeta_N)$ and a one parameter subgroup $\lambda(u)\in GL_\xi(N)$ such that $Z=\lim_{u\rightarrow 0}\lambda(u). W_1$;
			\item The algebraic asymptotic cone of $Z$ is given by $\mathcal C^{ps}$;		\item Suppose the family comes from a polarized K\"ahler-Einstein degeneration, then the analytic minimal bubble is isomorphic to $Z$ as   polarized affine Calabi-Yau varieties.
		\end{enumerate}
		\item There exists a unique complete (weak) Calabi-Yau metric on $Z$ up to holomorphic isometry and metric scaling, which gives the metric on the analytic minimal bubble associated to a polarized K\"ahler-Eintein degeneration. 
	\end{enumerate} 
\end{conjecture}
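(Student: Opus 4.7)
My plan is to build $W_1$ and $Z$ in two successive equivariant degenerations paralleling the 2-step degeneration theory of \cite{DS2}, and then to match the algebraic output with the analytic construction from Section 4. The main technical inputs are that algebraic families of Fano cones and affine varieties can be controlled via multi-graded Hilbert schemes, the Xu--Zhuang \cite{XZ} uniqueness of K-semistable degenerations, and the Li--Wang--Xu \cite{LWX} uniqueness of K-polystable degenerations within a K-semistable orbit.

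First I would construct $W_1$ as follows. Embed the family $\mathcal X \hookrightarrow \C^N \times \C$ compatible with the section, and consider the one-parameter family of weighted tangent cones obtained by rescaling $\lambda \cdot (\zeta_1,\ldots,\zeta_N)$. For $\lambda$ very small the limit is essentially a product with the smoothing direction trivialized, while for $\lambda$ very large the limit sees only the degeneration of $X_0$ to $W^{ss}$. I expect a critical intermediate $\lambda_0$ at which the fiber $W_1$ over $t=1$ first acquires $W^{ss}$ as its weighted asymptotic cone at infinity (with respect to the inverse weights). Existence of $\lambda_0$ can be approached by a semi-continuity argument on Hilbert functions together with the volume minimization principle of \cite{MSY, DS2}; uniqueness of $W_1$ should follow from an extension of the Xu--Zhuang theorem to this filtered, relative setting, where the filtration is the negative valuation induced by $(-\zeta_j)$ on the generic fiber.

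Next, to produce $Z$, I would follow the Li--Wang--Xu reduction from K-semistability to K-polystability. Since $W^{ss}$ degenerates equivariantly to $\mathcal C^{ps}$ through a one-parameter subgroup $\lambda(u) \subset GL_\xi(N)$, after a compatible linear change the same $\lambda(u)$ should act on $W_1$, and I would define $Z := \lim_{u \to 0} \lambda(u).W_1$. To verify that $Z$ is a K-polystable polarized affine Calabi-Yau variety with algebraic asymptotic cone $\mathcal C^{ps}$, I would argue that the asymptotic cone construction commutes with this equivariant limit (both sides being weighted Gr\"obner degenerations) and then reduce K-polystability to that of $\mathcal C^{ps}$ via a Matsushima-type rigidity along the degeneration. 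Parts (2c) and (3) would then be attacked together: plugging the algebraically constructed $Z$ into the setup of Section 4, the defining functions $F^b_{j,k}$ and transforms $\Gamma_{j,k}$ should decompose as compositions of the two one-parameter subgroups (the scaling $\lambda_j$ and the one-parameter subgroup $\lambda(u)$), and the subsequential limit identifying the analytic minimal bubble with a point of $\Hilb$ should literally recover $Z$ once the error terms $\tau_{j,k}$ are absorbed. Uniqueness of the complete Calabi-Yau metric on $Z$ would then follow from a generalized Bando--Mabuchi theorem for polarized affine Calabi-Yau varieties with fixed algebraic asymptotic cone, along the lines of Conjecture~6.8 in \cite{SZ}, combined with the Matsushima theorem applied to $\Aut(Z, \nu)$.

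The hard part will be uniqueness in (2), and specifically ruling out the non-Hausdorff phenomenon in the moduli of K-polystable polarized affine Calabi-Yau varieties with fixed asymptotic cone that was highlighted in \cite{SZ}. The algebraic construction naturally selects a preferred limit through the one-parameter subgroup inherited from the K-semistable-to-K-polystable reduction of $W^{ss}$, but a priori a different one-parameter subgroup with the same asymptotic behavior could land at a non-isomorphic K-polystable $Z'$. Matching with the analytic bubble via (2c) is likely the mechanism that pins down the correct $Z$: geometric convergence forces metric compatibility, and Bando--Mabuchi uniqueness on $\mathcal C^{ps}$ should propagate to rigidity of $Z$ itself. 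A secondary obstacle is that without K-polystability the errors $\tau_{j,k}$ in Section 4 might accumulate outside any single component of $\Hilb$; this is precisely why K-polystability of $Z$, rather than merely K-semistability as for $W_1$, is built into the conjecture.
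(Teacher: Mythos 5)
The statement you are trying to prove is stated in the paper as a \emph{conjecture}, and the paper offers no proof of it; it only sketches a strategy and, importantly, spends most of Section 5.1 explaining why the obvious strategy (essentially the one you propose) does \emph{not} yet close. Your proposal reproduces that strategy but treats several open problems as available tools. Concretely: (i) the existence and uniqueness of the critical weight $\lambda_0$ producing $W_1$ is exactly part (1) of the conjecture --- the paper only says it ``seems possible to show'' this, and your appeal to ``an extension of the Xu--Zhuang theorem to this filtered, relative setting'' is not an existing result but precisely the missing ingredient; (ii) part (3), uniqueness of the complete Calabi--Yau metric on $Z$, is Conjecture 6.8 of \cite{SZ}, which is open (the paper cites Chiu's example on $\C^3$ to show the metric is \emph{not} determined by the polarized affine Calabi--Yau structure alone, so only a weaker finite-dimensional rigidity is even expected); (iii) your proposed mechanism for uniqueness in (2) --- pinning down $Z$ by matching it with the analytic minimal bubble --- is only available when $\mathcal X$ arises from a polarized K\"ahler--Einstein degeneration, whereas the conjecture asserts uniqueness of $Z$ for a general flat $\bQ$-Gorenstein family; for a general family there is no analytic bubble to match against, and the paper explicitly states that uniqueness fails if one only assumes (a) and (b), because the moduli of K-polystable polarized affine Calabi--Yau varieties with fixed algebraic asymptotic cone need not be Hausdorff.

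A further unaddressed step is your assertion that ``the asymptotic cone construction commutes with this equivariant limit.'' Passing the weighted asymptotic cone through the degeneration $\lim_{u\to 0}\lambda(u).W_1$ requires flatness of the associated filtered/graded structures along the one-parameter subgroup, and this is exactly the kind of statement the paper flags as a missing link between the algebraic and analytic pictures (the error terms $\tau_{j,k}$ may accumulate and diverge when composed from $\underline k$ to $\overline k_j$). In short, your write-up is a reasonable restatement of the heuristic picture the author already lays out, but it is not a proof: each of the three numbered assertions of the conjecture reduces, in your argument, to another conjecture or to a step whose difficulty is the actual content of the problem.
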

\begin{remark}
A version of this conjecture was formulated in a discussion with Cristiano Spotti in 2016. For low dimensional examples, progress has been made by him and Martin de Borbon \cite{SM}.
\end{remark}

\begin{remark}
From the above discussion we know that in item (2), the uniqueness does not hold if we only assume (a) and (b) but it seems to be only a finite dimensional freedom. Also it is possible to use the more intrinsic language of valuations instead of affine embeddings in $\C^N$ but here we will not delve into it.  
The main purpose of this conjecture is to motivate further study.
\end{remark}

\begin{remark}
One may consider even more general $\mathbb Q$-Gorenstein families of log terminal singularities where the general fibers are not necessarily smooth. In this case one needs to assume the volume density at $p_0$ (in the sense of \cite{CLi}) is strictly smaller than that of $p_t$ so that bubbling actually occurs.	It is straightforward to extend the above discussion in this generality, which we omit here. 
\end{remark}

\begin{remark}
One may also ask about the dependence of both $W_1$ and $Z$ on the section $\sigma$ and the point $p_0$. For example, is it generically the same?
\end{remark}

To show this is of interest, we consider an  example here. Let $\mathcal X$ be the family defined by $x_1^2+x_2^2+x_3^2+x_4^4+tx_4=0$ in $\C^4\times \C$ and $p_t=(0, 0, 0, 0, t)$. Notice the central fiber $X_0$ is itself a K-semistable Fano cone with weights $\zeta_1=\zeta_2=\zeta_3=2, \zeta_4=1$ (see for example the discussion in \cite{DS2} on $A_k$ singularities). The corresponding K-polystable Fano cone $\mathcal C^{ps}$ is given by $\mathbb C^2/\mathbb Z_2\times \mathbb C$, which admits  the flat orbifold metric.  The obvious algebraic minimal bubble is given by $W_1=X_1$ if we choose $\zeta_1=\zeta_2=\zeta_3=\frac{2}{3}, \zeta_4=\frac{1}{3}$. Suppose $\mathcal X$ arises from  degeneration of polarized K\"ahler-Einstein metrics (for example, of polarized Calabi-Yau varieties), then the main results in this paper seem to  imply one of the following two possibilities must occur 
\begin{enumerate}
	\item The analytic minimal bubble is $W_1$ itself equipped with a complete Calabi-Yau metric which is asymptotic to the above flat orbifold cone. But we do not expect this to be the case since it would violate the general no-semistability at infinity conjecture in \cite{SZ}. 
	\item There is a degeneration of $W_1$ to another   polarized affine Calabi-Yau variety $Z$ with algebraic asymptotic cone $\mathcal C^{ps}$ and the analytic minimal bubble is given by $Z$. Algebraically, one way is to use the one-parameter subgroup $\lambda(u)=(u^2, u^2, u^2, u^4)$ and $Z$ is given by $x_1^2+x_2^2+x_3^2+x_4=0$, which is simply $\mathbb C^3$ with a non-standard negative valuation.  We know the latter admits an essentially unique complete Ricci-flat metric asymptotic to  the flat orbifold cone, by \cite{YLi, CR,Gabor1,Gabor2}.
\end{enumerate}
We leave these for interested readers to explore further.

\subsection{More general situation}

It is desirable to  relax the assumptions used in this paper. Our arguments depend crucially on the existence of a polarization, which allows the application of the H\"ormander $L^2$ technique. Without this the following is an important open question. Let $(X_j, \omega_j, p_j)$ be a sequence of $n$ dimensional K\"ahler-Einstein manifolds with $Ric(\omega_j)=c_j\omega_j (|c_j|\leq 1)$ such that $\Vol(B(p_j, 1))\geq \kappa>0$ and suppose $(X_\infty, d_\infty, p_\infty)$ is a Gromov-Hausdorff limit. Denote by $X_\infty=X_\infty^{reg}\cup X_\infty^{sing}$ the regular-singular decomposition in the sense of Cheeger-Colding. Notice that $X_\infty^{reg}$ is an open K\"ahler manifold. Denote by $\mathcal O$ the push-forward to  $X_\infty$ of the structure sheaf of $X_\infty^{reg}$. 
\begin{conjecture}[Complex analyticity]
	$(X_\infty, \mathcal O)$ is a normal complex-analytic space and the singular set in the complex-analytic sense coincides with $X_\infty^{sing}$.
\end{conjecture}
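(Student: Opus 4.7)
The principal obstacle to extending the Donaldson-Sun complex analyticity theorem beyond the polarized setting is the absence of a line bundle $L$ whose peak sections of $L^k$ provide, via H\"ormander's $L^2$ technique, local holomorphic coordinates and a means to separate points. The plan is to replace this global input by a purely local construction. Near each $p\in X_\infty$, build holomorphic functions directly on the smooth K\"ahler-Einstein manifold $X_\infty^{reg}$ with prescribed asymptotic behavior at $p$, then extend them to sections of $\cO$ across $X_\infty^{sing}$ using the fact that, by Cheeger-Colding, $X_\infty^{sing}$ has real Hausdorff codimension at least four, together with a Hartogs-type removable singularity theorem and a priori local $L^\infty$ bounds.

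Concretely the steps would proceed as follows. \emph{First}, establish that the metric tangent cone $\Ca_0(X_\infty, p)$ at every $p\in X_\infty$ is a normal affine variety endowed with a weak Calabi-Yau cone metric; in the polarized setting this is \cite{DS2}, and in the present generality one would run the \cite{DS2} arguments purely locally, using Cheeger-Colding non-collapsed theory as the geometric input, together with the volume minimization principle of \cite{MSY} and H\"ormander estimates on rescalings (rather than global sections of $L^k$) to pick out holomorphic functions on rescaled balls. \emph{Second}, using the tangent cone as a model, apply H\"ormander's $\bp$-method on $X_\infty^{reg}$ with weights of the form $2n\log r + \psi$, where $r = d(\cdot, p)$ and $\psi$ is a plurisubharmonic correction built from the local K\"ahler potential of $\omega_\infty$, to produce for each homogeneous holomorphic function on $\Ca_0(X_\infty, p)$ a holomorphic function on a punctured neighborhood in $X_\infty^{reg}$ with the prescribed leading asymptotic; a three-circles monotonicity argument, available on the regular set because the metric is smooth K\"ahler-Einstein there, controls the growth uniformly across scales. \emph{Third}, extend the resulting functions across $X_\infty^{sing}$ by Hartogs, assemble sufficiently many of them into a local holomorphic map $F\colon U\to \C^N$, check that $F$ is a homeomorphism onto its image and induces the sheaf $\cO$, and identify the image as a normal complex-analytic subvariety. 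The coincidence of the two singular sets then follows because $X_\infty^{reg}$ is open and complex-analytically smooth, while at any $p\in X_\infty^{sing}$ the tangent cone has volume density strictly less than one, which is incompatible with analytic smoothness at $p$.

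The hardest step is the second: carrying out the H\"ormander construction with prescribed asymptotics without a polarization. In \cite{DS1,DS2} one bootstraps from peak sections of $L^k$, whose logarithms supply both the plurisubharmonic weight and a canonical scale, and one must then propagate weighted $L^2$ estimates uniformly across scales $r\to 0$; this is particularly delicate in the absence of a polarization and, for the argument to close, would most plausibly require a companion result on the uniqueness of the tangent cone at each $p$ in the unpolarized setting. Such uniqueness would rigidify the asymptotic matching, and as a bonus should make the bubble-tree finiteness results of this paper available beyond the polarized case.
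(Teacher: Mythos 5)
This statement is not proved in the paper: it is stated explicitly as an open conjecture, introduced with the sentence that the paper's arguments ``depend crucially on the existence of a polarization, which allows the application of the H\"ormander $L^2$ technique,'' and that without this the statement ``is an important open question.'' So there is no proof in the paper to compare against, and your proposal should be judged as a research program rather than as a proof. As such it contains a genuine gap, which you yourself partially identify: the entire difficulty of the conjecture is concentrated in your second step, and the proposal does not resolve it. To run H\"ormander's method near a point $p\in X_\infty^{sing}$ you need a strictly plurisubharmonic weight on a punctured neighborhood in $X_\infty^{reg}$ with prescribed logarithmic singularity at $p$ and with curvature controlled uniformly as one rescales toward $p$. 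In the polarized case this weight comes for free from $-\log|s|^2$ for peak sections $s$ of $L^k$, which exist by a \emph{global} $L^2$ construction on the smooth manifolds $X_j$ and pass to the limit; the sections also supply the canonical scale that makes the three-circles/convexity argument close. Your substitute, ``$2n\log r+\psi$ where $\psi$ is a plurisubharmonic correction built from the local K\"ahler potential of $\omega_\infty$,'' is circular: the existence of a continuous local potential for $\omega_\infty$ across $X_\infty^{sing}$, with enough regularity to feed into an $L^2$ estimate, is essentially equivalent to the local complex-analytic structure you are trying to establish. Moreover $\log r$ for the Riemannian distance $r$ is not plurisubharmonic in general, and the Cheeger-Colding codimension-four bound on $X_\infty^{sing}$ gives a Hartogs extension only \emph{after} one knows the ambient space is a normal complex space of the expected dimension, which is again the conclusion.

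A secondary issue is your first step. You are right that one should start from the tangent cones, and in fact more is known there than you assume: Liu--Szekelyhidi have shown that in this unpolarized non-collapsed K\"ahler-Einstein setting the metric tangent cones are still normal affine varieties (the paper cites this). But knowing the tangent cone at $p$ is an affine variety does not by itself produce holomorphic functions on a fixed-size neighborhood of $p$ in $X_\infty$; transplanting homogeneous functions from the cone to the space requires exactly the uniform-in-scale $L^2$ machinery whose absence is the point of the conjecture. Your closing observation that the argument ``would most plausibly require a companion result on the uniqueness of the tangent cone in the unpolarized setting'' concedes that the program rests on another open problem. In short: the strategy is a reasonable description of how one might hope to attack the conjecture, and it correctly locates the obstruction, but it does not constitute a proof, and no step of it goes beyond what is already understood to be the open difficulty.
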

Noticing that by Liu-Szekelyhidi \cite{LZ} cone limits in this general case are still  affine varieties, and it is natural to conjecture that the singularities are log terminal. The latter seems to be a necessary ingredient in order to extend Theorem \ref{t:volume finite}.

Moving further, one can also study the bubbling phenomenon in this more general situation.
 Notice that here we do not expect the bubble limits to be affine varieties since compact holomorphic cycles may get contracted in the limiting process. A model case is to consider the convergence to the asymptotic cone of a complete Calabi-Yau metric with maximal volume growth. It is also an open question to classify the later in terms of algebro-geometric data, see \cite{SZ} for more details.

Finally, the volume collapsing situation is substantially much more complicated.  The only known general  result is in the case of 2 dimensional Calabi-Yau metrics with uniformly bounded $L^2$ curvature energy studied in \cite{SZ2}. In particular, in this more restrictive case (which corresponds to degenerations of K3 surfaces) we know the bubble tree is also finite, but the algebro-geometric meaning is not yet well-understood. In general one expects that non-Archimedean geometry may be involved to study the bubble tree structure. Without the bounded energy hypothesis, even in complex dimension 2, if one allows collapsing then in the local setting the bubble tree may not even terminate in finite steps.

 \subsection{The Riemannian setting}\label{ss:5-2}
 The notion of  a \emph{bubble tree} is commonly studied in many problems in geometric analysis, such as minimal surfaces, harmonic maps, Yang-Mills connections and Einstein metrics etc. Often one works with an energy functional which is conformally invariant and studies the singularity formation for a sequence of solutions with uniformly bounded energy. Specific to the case of Einstein metrics in real $m$ dimensions, assuming a uniform bound on the curvature energy $\int|Rm|^{m/2}$ and non-collapsing volume, bubble tree convergence was studied in \cite{AC, Bando1, Bando2}; in dimension 4, by the result of Cheeger-Naber \cite{CN} one gets the same results under the volume non-collapsing assumption alone. In higher dimensions, very little is known in general regarding the bubble tree structure if we only assume non-collapsing volume. We formulate some conjectures and questions related to generalizations of our main results in this paper.

First we will allow Einstein metrics with mild singularities. For simplicity by \emph{singular} Einstein metrics we mean a non-collapsed  Gromov-Hausdorff limit of smooth Einstein manifolds.
 Given $m\geq3$, we denote by $\mathfrak {C}(m)$ the isometry classes of $m$ dimensional singular Ricci-flat cones, and by $\mathcal E(m)$ the isometry classes of $m-1$ dimensional compact singular Einstein metrics with Einstein constant $m-2$. For $X\in \mathcal E(m)$ we denote by $\Vol(X)$ the Riemannian volume of $X$ and for $\mathcal C\in \mathfrak C(m)$ we denote by $\Vol(\mathcal C)$ the volume density of $\mathcal C$. A natural generalization of Theorem \ref{t:volume finite} is 
 
 \begin{conjecture}[Volume rigidity]\label{c:volume discrete general}
 For any $m\geq3$, the set 
 $$\mathcal V_1(m)=\{\Vol(\mathcal C)|\mathcal C\in \mathfrak{C}(m)\}$$ and $$\mathcal V_2(m)=\{\Vol(X)|X\in \mathcal E(m))\}$$  are both discrete subsets of $(0, \infty)$.
 \end{conjecture}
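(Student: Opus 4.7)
The plan is to reduce both assertions to a local rigidity statement for singular Einstein metrics and then to attempt to establish that rigidity by adapting the strategy of Theorem \ref{t:volume finite}. The cone-cross-section correspondence identifies $\mathfrak{C}(m)$ with the class of (possibly singular) Einstein $(m-1)$-manifolds $X$ with $\Ric(g)=(m-2)g$ via the metric cone construction $\mathcal C=C(X)$, and the two volumes differ by a universal constant ($\Vol(S^{m-1})$); hence $\mathcal V_1(m)$ is discrete if and only if $\mathcal V_2(m)$ is, and I focus on $\mathcal V_2(m)$.

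Next I would set up compactness: Bonnet-Myers gives $\diam(X)\leq\pi$ on each $X\in \mathcal E(m)$, Bishop-Gromov supplies an upper volume bound, and the non-collapsing built into the definition of $\mathcal E(m)$ supplies the lower bound, so $\mathcal E(m)$ is precompact in the Gromov-Hausdorff topology and Colding's volume convergence theorem makes $\Vol$ continuous on its closure. If $\mathcal V_2(m)$ failed to be discrete, one could extract a sequence $X_i\in \mathcal E(m)$ with pairwise distinct volumes converging to some $v_\ast$ and, after a subsequence, with $X_i\to X_\infty\in \mathcal E(m)$ and $\Vol(X_\infty)=v_\ast$. The conjecture thus reduces to a \emph{local rigidity of volume}: whenever $X_i\to X_\infty$ in $\mathcal E(m)$, one has $\Vol(X_i)=\Vol(X_\infty)$ for $i$ large. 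To attempt this I would seek a finite-dimensional real-analytic moduli space parameterizing a Gromov-Hausdorff neighborhood of $X_\infty$ in $\mathcal E(m)$, on which volume is real-analytic; a Lojasiewicz argument would then show that only countably many volume values accumulate at $\Vol(X_\infty)$, contradicting the choice of $X_i$. On the regular part $X_\infty^{reg}$, Koiso's theory supplies a formal finite-dimensional model for Einstein deformations, and one would try to match these deformations to the convergence data near $X_\infty^{sing}$ by combining Cheeger-Colding $\epsilon$-regularity with a tangent-cone rigidity. An inductive bubble-tree argument in the spirit of Corollary \ref{c:bubble tree finiteness} would then propagate local rigidity of the tangent cones (themselves elements of $\mathfrak C(m)$) upward to local rigidity of $X_\infty$ itself.

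The main obstacle, and the reason this remains a conjecture, is the lack in the purely Riemannian setting of any algebraic replacement for the multi-graded Hilbert scheme parameterization used to prove Theorem \ref{t:volume finite}. Without it, both a general codimension estimate for the singular set (currently available only in real dimension four via Cheeger-Naber, or under an auxiliary $L^{m/2}$ curvature bound) and a deformation theory for singular Einstein metrics analogous to Koiso's smooth theory appear out of reach. Consequently I expect the scheme above to succeed unconditionally in real dimension four, and in all dimensions under an auxiliary $L^{m/2}$ curvature-energy bound via the Anderson-Bando-Kasue-Nakajima bubble-tree regularity; the general case likely demands a genuinely new input, either a quantitative no-deformation principle for singular Einstein structures or a new algebraic or non-Archimedean structure on the moduli of singular Ricci-flat cones that can substitute for the Fano-cone rigidity exploited in the K\"ahler setting.
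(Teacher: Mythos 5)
This statement is a \emph{conjecture} in the paper (Conjecture~\ref{c:volume discrete general}); the paper offers no proof. It only records that the statement is known when one restricts to cones with isolated singularities and to smooth $X$, via the local real-analytic structure of the moduli space of smooth Einstein metrics (Besse, Ch.~12), and it suggests exactly the inductive strategy you describe: understand singularity formation one dimension lower and propagate rigidity upward. So your proposal cannot be "checked against the paper's proof"; it can only be assessed as a strategy, and as such it is broadly aligned with what the author envisions but is not a proof --- which, to your credit, you acknowledge in your final paragraph.

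Two concrete gaps beyond the one you name. First, your opening reduction identifying $\mathcal V_1(m)$ with $\mathcal V_2(m)$ up to the constant $\Vol(S^{m-1})$ is not automatic under the paper's definitions: \emph{singular} here means "arising as a non-collapsed Gromov--Hausdorff limit of smooth Einstein manifolds," and the cross-section of a singular Ricci-flat cone in $\mathfrak C(m)$ need not itself arise as a non-collapsed limit of smooth Einstein $(m-1)$-manifolds (nor, conversely, need $C(X)$ for $X\in\mathcal E(m)$ arise as such a limit in dimension $m$). Making the cone--link correspondence compatible with these limit classes is precisely the "understanding of singularity formation one dimension lower" that the paper flags as the missing ingredient. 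Second, your \L ojasiewicz step is logically insufficient as stated: showing that "only countably many volume values accumulate at $\Vol(X_\infty)$" does not contradict the existence of a sequence of pairwise distinct volumes converging to $v_\ast$, since such a sequence is itself countable. What the smooth case actually uses is that Einstein metrics are critical points of the normalized total scalar curvature, so that functional --- and hence the volume, after normalizing the Einstein constant --- is \emph{locally constant} on the finite-dimensional real-analytic premoduli space; you need local constancy, not countability. Extending that to a deformation theory for singular Einstein metrics, and controlling how the singular strata enter the moduli description, is the genuinely open part, and your proposal does not supply it.
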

 \begin{remark}
One might also want to allow even more general singularities using more synthetic formulations, without referring to Gromov-Hausdorff limits. However for the conjecture to hold one has to exclude for example codimension 2 cone singularities; otherwise one notices that the volume of flat 2 dimensional cones can deform continuously. \end{remark}

This is known to be true if we restrict to cones with isolated singularities and smooth $X$. This follows from the local analytic structure of the moduli space of smooth Einstein metrics (see for example Besse \cite{Besse}, Chapter 12). A strategy to tackle this question in general would require more precise understanding of the singularity formations in one dimension lower, which suggests a possible (but possibly complicated) inductive approach. 
 
Now we consider a sequence of Einstein metrics $(X_j, g_j, p_j)$ converging to a singular Einstein metric $(X_\infty, g_\infty, p_\infty)$ with non-collapsing volume. Denote by $\Vol(p_\infty)$ the volume density at $p_\infty$. A folklore question is the uniqueness of tangent cones at $p_\infty$. This is  known if one a priori assumes the curvature has only quadratic blow-up at $p_\infty$ (c.f. \cite{CM}).  As a generalization of Theorem \ref{t:minimal bubbles}, one needs to study

 \begin{conjecture}[Cone rigidity]\label{c:cone rigidity}
The set of bubble limits with volume density equal to $\Vol(p_\infty)$ consists of a single point.  \end{conjecture}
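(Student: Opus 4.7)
The strategy I propose closely follows the argument for Theorem \ref{t:isolation} in the K\"ahler setting, replacing each algebro-geometric input with a (still conjectural) Riemannian counterpart. First, I would reduce the statement to one about cone limits. Given any non-compact bubble limit $Z$ with volume density at infinity equal to $\Vol(p_\infty)$, both $\mathcal C_0(Z)$ and $\mathcal C_\infty(Z)$ arise as bubble limits of the original sequence by a diagonal argument, and Bishop-Gromov monotonicity forces their volume densities to equal $\Vol(p_\infty)$ as well. The equality case of Bishop-Gromov then implies $\Vol_0(Z)=\Vol_\infty(Z)$, and so $Z$ itself is a metric cone. Hence the conjecture reduces to showing that the subset of cone bubble limits with volume density $\Vol(p_\infty)$ is a singleton.

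Granting Conjecture \ref{c:volume discrete general} (volume rigidity), I would next parametrize cone limits near $\mathcal C_0(X_\infty)$ by a finite-dimensional analytic moduli space coming from the Einstein deformation theory of the cross-sectional link (an Einstein manifold of Einstein constant $m-2$); this plays the role of the multi-graded Hilbert scheme in Section 3. Given a putative second cone limit $\mathcal C'$ with the same volume density, one would then run the interpolation argument of Section 4: choose scales $\lambda_j'<\lambda_j''$ with $X_{j,\lambda_j'}\to \mathcal C_0(X_\infty)$ and $X_{j,\lambda_j''}\to \mathcal C'$, and use Gromov compactness to produce for each intermediate $\lambda$ a cone bubble limit of volume density $\Vol(p_\infty)$. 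Combined with the discreteness from Conjecture \ref{c:volume discrete general}, this would exhibit a continuous arc of non-isometric cones of fixed volume density passing through $\mathcal C_0(X_\infty)$. The target of the final step is then to rule this out by a Riemannian substitute for the combination of Matsushima reductivity of $\Aut(\mathcal C)$ and the Bando-Mabuchi uniqueness theorem.

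The principal obstacle is exactly this final rigidity step, which has no presently available Riemannian replacement in the needed generality. What is required is essentially the statement that a continuous family of compact Einstein metrics on the link with fixed Einstein constant and fixed volume must be isometrically trivial. This would follow from an infinitesimal Einstein rigidity assumption on the link of $\mathcal C_0(X_\infty)$ combined with a Koiso-type integrability argument, but such infinitesimal rigidity is itself conjectural and can genuinely fail in the presence of non-trivial moduli of positive Einstein metrics on the link. A more tractable intermediate target is to first prove Conjecture \ref{c:volume discrete general} by an inductive argument on $m$, using that the link lives one dimension lower, and then attempt Conjecture \ref{c:cone rigidity} under an infinitesimal rigidity hypothesis on the link, with the hope that this rigidity becomes automatic once volume discreteness is in hand.
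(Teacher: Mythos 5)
The statement you are asked to prove is Conjecture \ref{c:cone rigidity}, which the paper itself leaves open: it appears in Section \ref{ss:5-2} as part of the discussion of the general Riemannian setting, and the paper offers no proof, only the remarks that (i) it is an upgrade of uniqueness of tangent cones, known when the tangent cone at $p_\infty$ is smooth via the Colding--Minicozzi strategy, and (ii) together with Conjecture \ref{c:volume discrete general} it would yield minimal bubbles and finite bubble trees exactly as in the K\"ahler case. Your proposal is therefore correctly calibrated: it is a strategy outline, not a proof, and you say so explicitly. The architecture you describe --- reduce to cone limits via the equality case of Bishop--Gromov, replace the multi-graded Hilbert scheme by a finite-dimensional moduli of Einstein structures on the link, run the interpolation-of-scales argument of Section 4, and then invoke a rigidity statement to kill the resulting arc of cones --- is precisely the transplant of the paper's K\"ahler argument that the author has in mind, and your identification of the missing ingredient (a Riemannian substitute for Matsushima reductivity plus Bando--Mabuchi uniqueness) matches the paper's own diagnosis, which phrases the same gap as the absence of a substitute for the rigidity of the holomorphic spectrum and proposes Laplacian spectrum rigidity as a candidate.

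One concrete error in your reduction step: you take as hypothesis that the \emph{volume density at infinity} of $Z$ equals $\Vol(p_\infty)$ and conclude that $Z$ is a metric cone. That implication is false --- a minimal bubble satisfies $\Vol_\infty(Z)=\Vol(\mathcal C_0(X_\infty))=\Vol(p_\infty)$ by Theorem \ref{t:minimal bubbles} and is by definition not a cone. The conjecture's hypothesis is that the volume density at the basepoint, $\Vol_0(Z)$, equals $\Vol(p_\infty)$; with that hypothesis the chain $\Vol_0(Z)\geq\Vol_\infty(Z)\geq\Vol_0(X_\infty)=\Vol(p_\infty)$ collapses to equalities, the first of which characterizes metric cones, and the reduction goes through. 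With that correction your outline is a faithful (and honestly incomplete) Riemannian analogue of the paper's Section 4; the remaining steps are genuinely open, as both you and the paper acknowledge.
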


This can be viewed as an upgrade of the uniqueness of tangent cones. It seems natural to apply the strategy in \cite{CM} to prove this  when the tangent cone at $p_\infty$ is smooth. Notice Conjecture \ref{c:volume discrete general} and \ref{c:cone rigidity} together imply the existence of minimal bubbles and finite step termination of bubble trees, in exactly the same way as the K\"ahler-Einstein case.

Finally, a key reason why one can prove far stronger results in the (polarized) K\"ahler setting is due to the rigidity of holomorphic spectrum (as discussed in \cite{DS2}). In the general Riemannian setting it is desirable to find a substitute. A naive candidate is the Laplacian spectrum. We propose the following questions for future study

\begin{question}
Are there special arithmetic properties of the Laplacian spectrum on compact Einstein manifolds with Einstein constant $m-1$?	
\end{question}

\begin{question}[Spectrum rigidity]
Suppose $g_t$ is a smooth family of Einstein metrics on a compact manifold with Einstein constant $m-1$, is it true that the Laplacian spectrum must be constant in $t$?
\end{question}

Of course, one can formulate analogous conjectures/questions to the above for other geometric PDEs.

\bibliographystyle{plain}

\bibliography{ref.bib}

\end{document}